 
\documentclass[12pt]{amsart}

\usepackage{url}
\usepackage{verbatim}
\usepackage[margin=1in]{geometry} 
\usepackage{amsmath,amscd, amsthm, mathrsfs, amssymb, esint}
\usepackage{graphicx}
\usepackage{float}
\usepackage{subcaption}
\usepackage{physics}
\usepackage{mathtools}
\mathtoolsset{showonlyrefs}

\usepackage[colorlinks=true,linkcolor=red]{hyperref}

\makeatletter
\renewcommand*{\ref}[1]{%
  \hyperref[{#1}]{\textup{\tagform@{\ref*{#1}}}}%
}
\makeatother
\usepackage{array}
\usepackage{booktabs}
\setlength{\heavyrulewidth}{1.5pt}
\setlength{\abovetopsep}{4pt}
\theoremstyle{plain}
   \newtheorem{theorem}{Theorem}[section]


\theoremstyle{definition}

   \newtheorem{proposition}{Proposition}[section]

   \newtheorem{lemma}{Lemma}[section]
   \newtheorem{corollary}{Corollary}[section]
   \newtheorem{conjecture}{Conjecture}[section]

   \newtheorem{definition}{Definition}[section]
   
   \newtheorem{remark}{Remark}[section]

\newcommand{\vv}{\mathbf{v}}

\newcommand{\RR}{\mathbb{R}}

\newcommand{\mini}[2]{\text{min}\{#1,#2\}}

\newcommand{\N}{\mathbb{N}} 
\newcommand{\R}{\mathbb{R}} 
\newcommand{\E}{\mathcal{E}}

\renewcommand\ip[2]{\langle#1, #2\rangle}

\newcommand{\inner}[2]{\left \langle #1, #2\right \rangle}
\newcommand{\lap}{\Delta}

\newcommand{\sg}{SG}

\def\scriptO{{{\it O}\kern -.42em {\it `}\kern + .20em}}
\setlength\parindent{0pt}

\usepackage{esvect}

\begin{document}

\title{Sobolev orthogonal polynomials on the Sierpinski gasket}

\author{Qingxuan Jiang} 
\address{Department of Mathematics, 
Malott Hall, Cornell University, Ithaca, NY 14853, USA}
\email{qj46@cornell.edu}
\author{Tian Lan}
\address{Department of Mathematics,
ETH Zurich, Ramistrasse 101, 8092 Zurich, Switzerland}
\email{tialan@student.ethz.ch}
\author{Kasso A.~Okoudjou}
\address{Department of Mathematics,
Tufts University, Medford MA 02131, USA} \email{Kasso.Okoudjou@tufts.edu}
\author{Robert S. Strichartz}
\address{Department of Mathematics, 
Malott Hall, Cornell University, Ithaca, NY 14853, USA} \email{str@math.cornell.edu}
\author{Shashank Sule}
\address{Department of Mathematics,
University of Maryland, College Park, MD 20742, USA} \email{ssule25@umd.edu}
\author{Sreeram Venkat}
\address{Department of Mathematics, SAS HAll, North Carolina State University, 2311 Stinson Dr, Raleigh, NC 27607, USA} \email{srvenkat@ncsu.edu}
\author{Xiaoduo Wang}
\address{Department of Mathematics, University of Illinois Urbana-Champaign, Urbana, IL 61801, USA} \email{xiaduo3@illinois.edu}

\subjclass[2000]{Primary 42C05, 28A80 Secondary 33F05, 33A99}
\keywords{Orthogonal polynomials, Sierpinski Gasket, Sobolev orthogonal polynomials}

\date{\today}

\maketitle

\begin{abstract}
We develop a theory of Sobolev orthogonal polynomials on the Sierpi\'nski gasket ($\sg$), which is a fractal set that can be viewed as a limit of a sequence of finite graphs.  These orthogonal polynomials arise through the Gram-Schmidt orthogonalisation process applied on the set of monomials on $\sg$ using  several notions of a Sobolev inner products. After establishing some recurrence relations for these orthogonal polynomials, we give  estimates for their  $L^2$, $L^\infty$ and Sobolev norms, and study  their asymptotic behaviour.  Finally, we study the properties of zero sets of polynomials and develop fast computational tools to explore applications to quadrature and interpolation.

\end{abstract}

\maketitle \pagestyle{myheadings} \thispagestyle{plain}
\markboth{Q. JIANG, T. LIAN, K. A. OKOUDJOU, R. S. STRICHARTZ, S. SULE, S. VENKAT, AND X. WANG}{SOBOLEV ORTHOGONAL POLYNOMIALS ON SG}

\tableofcontents

\section{Introduction}\label{sec: intro}
Over the last two decades, a theory of calculus on fractal sets such as the Sierpi\'nski gasket ($\sg$) has been developed and is based on the analysis of the fractal Laplacian \cite{NSTY, BGSKSY, SU}. In particular, a polynomial of degree $j$ on  $\sg$ is any solution of the equation $\Delta^{j+1}u=0$, where $\Delta$ denotes the Laplacian on $\sg$ \cite{kigami2001analysis, S}. While most aspects of the theory of polynomials in this setting parallel their counterparts on the unit interval $I=[0, 1]$, there exists a number of striking differences. In particular, there is no analog of the Weierstrass Theorem on $\sg$, that is the set of polynomials on $\sg$ is not complete on $L^2(\sg)$ \cite[Theorem 4.3.6]{kigami2001analysis}. On the other hand, an initial theory of orthogonal polynomials has been developed on $\sg$ and resulted in an analog of the Legendre orthogonal polynomials on $[-1,1]$, \cite{OST}. More specifically,  the  Legendre OPs on $\sg$ arise in solving the least squares problem $$\arg\min\{\|f-g\|_{2}\,\, \,  g\,\,  \text{polynomial\, of\, order}\, j\},$$where   $f\in L^2(\sg).$ 

In this paper, we are interested in solving a similar least squares problem, with the added requirement that the function to approximate is also smooth. That is we seek the solution of the optimization problem 
$$\arg\min\{\|f-g\|_{S}\,\, \,  g\,\,  \text{polynomial\, of\, order\, at\, most}\, j\},$$ where $f\in S$ a linear subspace of $L^2(\sg)$ that measures the smoothness of $f$. We will show that the solution to this problem can be expressed in terms of orthogonal polynomials with respect to various inner products.  We shall generically refer to these OPs  as \emph{Sobolev Orthogonal Polynomials (SOP)}. Our aim is to initiate a systematic study of SOPs on $\sg$ in analogy to the theory of Sobolev OPs  on $\R$. 

We begin by a brief review of the SOP on $[-1,1]$, and we refer to \cite{mmb17, MX, Meij} for more details on this class of OPs and certain of their generalizations.
Given $\chi>0$, consider the inner product  $$\inner{f}{g}_{S} := \int_{-1}^{1}fg\,dx + \chi\int_{-1}^{1}f'g'\,dx$$
defined on the Sobolev space $W^{1,2}([-1,1])$ of functions $f\in L^2[-1,1]$ such that $f'\in L^2[-1,1]$. Applying the Gram-Schimdt orthogonalisation process to the monomials $\{x^n\}_{n\geq 0}$ in this inner product space results in the so-called  Sobolev-Legendre polynomials, which we  denote by  $\qty{S_{n}\qty(\cdot; \chi)}$. We refer to  \cite{Alt, sch72} where these OPs were first investigated.
    While the classical Legendre OPs $\qty{P_n}$ on $[-1,1]$ satisfy the ubiquitous three-term recurrence formula, there exists no such relation for the  Sobolev-Legendre OPs. This can be seen as the consequence of the fact that $$\ip{xf}{g}_S\neq \ip{f}{xg}_S.$$  However, the Sobolev-Legendre OPs enjoy the following properties. 

\begin{enumerate}
    \item The two-term differential equation
    $$ \chi S''\qty(x;\chi) - S\qty(x;\chi) = A_nP'_{n+1}\qty(x) + B_nP'_{n-1}\qty(x)$$ for some constants $A_n, B_n.$
    \item The two-term recurrence relation 
    $$ S_{n}\qty(x;\chi) - S_{n-2}\qty(x;\chi) = a_{n}\qty(P_{n}\qty(x)) - P_{n-2}\qty(x))$$ for some sequence $a_n$.
    \item $\qty{S_n\qty(x;\chi)}$ has $n$ simple zeroes in $\qty(-1,1)$
\end{enumerate}

Our goal is to use the theory of polynomials on $SG$ that was developed in \cite{S} to investigate OPs on $SG$ with respect to the family of inner products given by $$\ip{f}{g}_{W^{m, 2}}=\int_{SG}f(x)g(x)\, \dd{\mu}(x)+\sum_{k=1}^m \chi_k\int_{SG}\lap^k{f(x)}\lap^k{g(x)}\, \dd{\mu} (x),$$ where $\chi_k$ are nonnegative integers and $m\geq 1$. The case  $m=1$ will be our model case which we shall discuss in details and compare to the situation on the interval $[-1, 1]$.

We recall that the polynomials on $SG$ can be build from three basic families of monomials that we will defined formally in Section~\ref{sec:polynomials}. Consequently, we construct  the (three families)  OPs with respect to the inner products given above by  applying the Gram-Schmidt orthogonalisation process. Subsequently, we prove several properties of the resulting OPs. Some of these properties are common to the three families, while others are different.  For example, all three families of OPs satisfy a three-term recurrence relation and a three-term differential equation involving the Legendre Orthogonal Polynomials on $\sg$ studied in \cite{OST}. The recurrence relations also allow us to establish explicit bounds on various norms of the SOPs, and to derive other interesting results.

Furthermore, by combining  the three-term recurrence and the computational procedures developed in \cite{OST}, we graph several SOPs not only on $\sg$, but also on its edges. The visualization of these polynomials allows a detailed study of their qualitative properties, such as the number and location of zeroes. Observing that some polynomials seem to have more zeroes than the dimension of their ambient subspace, we raise a more fundamental question on finding sets of points  which allow interpolation of functions on $\sg$. We are able to establish that these sets must have empty interior, and proceed to construct an infinite family of such sets. We use these constructions to give a generalization of the spline quadrature formula originally developed in \cite{SU}.

The rest of the paper is organized as follows. In Section \ref{sec:prelim}, we introduce the analytical tools required to define  the polynomials on the $\sg$ and we prove a general topological result pertaining to the location of zeroes of continuous functions on $\sg$. The section also introduces polynomial spaces on $\sg$ and a formal definition of the  Sobolev inner products whose properties are presented.
This section also establishes results on the zeroes of entire functions defined in \cite{NSTY}. These results, while not directly related to the SOPs are of interest in their own right. Section \ref{sec:sopsg} contains most of the main results of this paper, dealing with the three families of SOPs. In addition, we prove a recurrence relation and a number of related results  for a generalized Sobolev inner product involving higher-order derivatives and boundary terms. Finally, in Section \ref{sec: applications} we present the aforementioned plots of the polynomials and discuss applications to interpolation and quadrature on $\sg$.

We conclude this introduction by pointing out that along with \cite{OST}, this paper can be viewed as not only laying the foundation of a general theory of OPs on $\sg$, but also initiate some applications of such a theory. Furthermore, because $\sg$ can be viewed as a limit  of a sequence of finite graphs, our results also suggest developing this theory for  graphs, and explore the implications for graph signal processing. For more on polynomials approximation of graph signals see, e.g.,  \cite{Pesenson09, WNW20}.

\section{Polynomials on  \texorpdfstring{$\sg$}{SG}}\label{sec:prelim} 
In the first part of this section, we collect the analytical tools on $\sg$ that are needed to prove our results. In particular, the definition of $\sg$ as a limit of a sequence of finite graphs will play an important role in some of our results. More details can be found in \cite{S}. The second part of the section is devoted to introducing canonical families of polynomials on $\sg$, while in the last part we collect some results on the zero sets of continuous functions on $\sg$ that are interesting in their own right.

\subsection{Analysis on \texorpdfstring{$\sg$}{SG}}\label{sec:sub21} 
Let $V_0 = \qty{q_0, q_1, q_2} \in \mathbb{R}^2$, where $q_0:=\qty(\frac 12,\frac{\sqrt 3}2)$, $q_1:=\qty(0,0)$, $q_2:= \qty(1,0)$ and $F_i\qty(x) := \frac{1}{2}\qty(x-q_i) +q_i$ for $i=0,1,2$. Then $\sg$ is the unique nonempty compact set in $\RR^2$ satisfying 
\begin{align}
    \sg = \bigcup_{i=0}^{2}F_{i}\qty(\sg).
\end{align}

Let
 $\omega=\qty(\omega_1,\omega_2,\dots\omega_m)\in\qty{0,1,2}^m$ to be \emph{a word of length} $|\omega|=m$, and  set $F_\omega:=F_{\omega_1} \circ  F_{\omega_2} \cdots F_{\omega_m}$. We call $F_w\qty(\sg)$ an \textit{m-cell} for $|\omega|=m$. For $m\geq 0$, let $V_m = \bigcup_{|w|=m}F_{w}\qty(V_0)$.  Define a sequence of finite graphs $\Gamma_m$ with vertices in $V_m$, and edges generated by the corresponding Euclidean embedding of $V_m$ in $\RR^2$.  In particular, we say that two vertices $x,$ and $y\in V_m$ are neighbor if they lie in the same $m-$cell, in which case we write
 $y \underset{m}{\sim} x$.  Note that the graph $\Gamma_{m+1}$ is a refinement of the graph $\Gamma_m$, i.e., $\Gamma_{m+1}=\cup_{i=0}^2 F_i(\Gamma_{m})$, and that $\sg$ is the limit as $m$ goes to infinity of this sequence of graphs; see Figure~\ref{fig: Graph Approxs to SG}. In the sequel we let  $V^*:=\cup_m V_m$ be  the set of all \textit{vertices}, we term $V_0$ the \textit{boundary points}, and $V^*\setminus V_0$ the set of \textit{junction points}. We refer to \cite{S} for details. Observe also that that, 
 \begin{align}
  \sg = \overline{\bigcup_{m=1}^{\infty}\bigcup_{|w|=m}F_w\qty(V_0)}.
\end{align}

\begin{figure}[h]
    \begin{minipage}{0.45\textwidth}
    \centering
    \includegraphics[width=\textwidth]{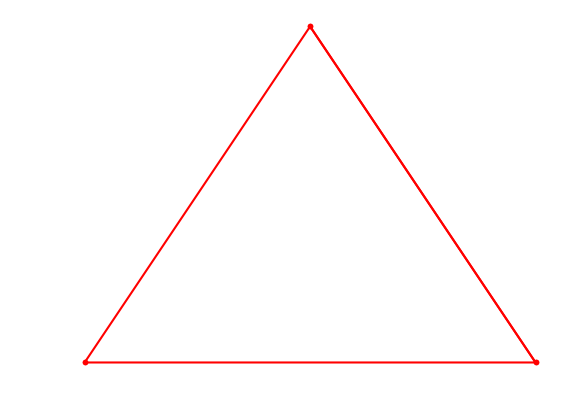}\\\hspace{0.2in}$\Gamma_0$
    \end{minipage}
    \begin{minipage}{0.45\textwidth}
    \centering
    \includegraphics[width=\textwidth]{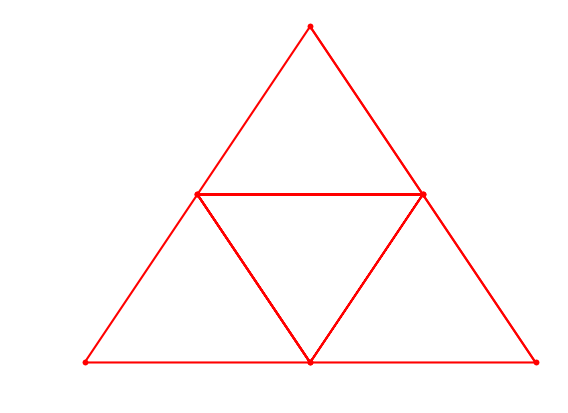}\\\hspace{0.2in}$\Gamma_1$
    \end{minipage}
    \begin{minipage}{0.45\textwidth}
    \centering
    \includegraphics[width=\textwidth]{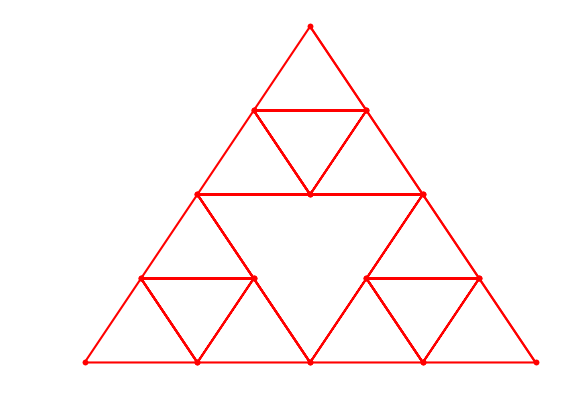}\\\hspace{0.2in}$\Gamma_2$
    \end{minipage}
    \begin{minipage}{0.45\textwidth}
    \centering
    \includegraphics[width=\textwidth]{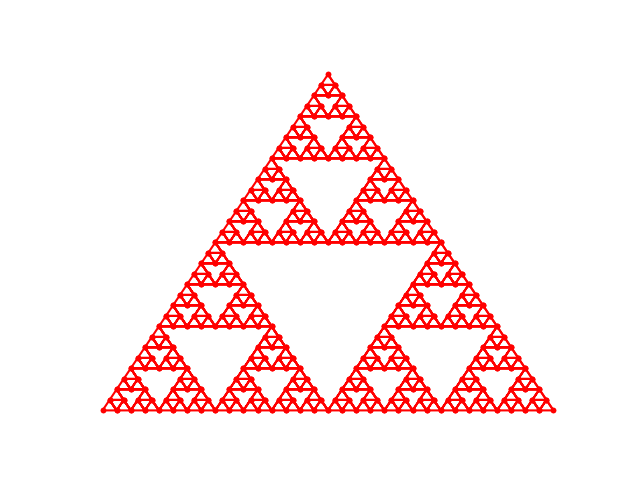}\\\hspace{0.2in}$\Gamma_5$
    \end{minipage}
   \caption{Graph Approximations $\Gamma_m$ to $\sg$ for $m=0,1,2,5$. }
\label{fig: Graph Approxs to SG}
\end{figure}

In this paper, we consider only real-valued functions defined on $\sg$. Let $u,v$ be real-valued functions on $\sg$ and set $E_m\qty(u,v):=\sum\limits_{x\underset{m}{\sim}y} \qty(u\qty(x)-u\qty(y))\qty(v\qty(x)-v\qty(y))$. Observe that $E_m$ can be viewed as an energy functional defined on the finite graph $\Gamma_m$. The \textit{energy} of $u$ and $v$ is defined as $\mathscr{E}\qty(u,v):=\lim\limits_{m\to \infty}\qty(\frac 35)^{-m}E_m(u,v)$. We say $u\in \text{dom}\ \mathscr{E}$ if $\mathscr{E}(u,u)$ exists and is finite. In this case,  all such functions $u$ of finite energy are $1/2$ - H\"older continuous in the effective resistance metric, e.g., see \cite{S}. 
 
We will assume that $\sg$ is equipped with its \textit{standard self-similar measure}, denoted by $\mu$ and which assigns the measure $1/3^m$ to each $m$-cell. We note that $\mu$ is a regular Borel probability measure on $\sg$.

Let $u\in \text{dom}\ \mathscr{E}$ and $f$ a continuous on $\sg$. Then we say $u\in$ dom $\Delta_\mu$ with $\Delta_\mu u=f$ 
if 
\begin{align}
    \mathscr{E}\qty(u,v)=-\int_{SG}fv\dd{\mu} \label{eq: weak formulation of lap}
\end{align}
for any $v\in \text{dom} \ \mathscr{E}$ vanishing on the boundary. Equation \eqref{eq: weak formulation of lap} is termed the \textit{weak formulation} of the Laplacian. Furthermore,  there exists an explicit formula for $\Delta_{\mu}u$ at any junction point $x$, given by $\Delta_{\mu}u\qty(x) = \frac{3}{2}\lim\limits_{m \to \infty}5^m\Delta_mu\qty(x)$, where $\Delta_mu\qty(x) := \sum\limits_{y \underset{m}{\sim} x}\qty(u\qty(y) - u\qty(x))$. Observe that $\Delta_m$ is the graph Laplacian on the finite graph $\Gamma_m$ whose associated quadratic form is the graph energy $E_m$. 
We note that due to the absence of a Leibniz-type rule for $\Delta_{\mu}$, $\text{dom}_{\mu}$ is not an algebra under  pointwise multiplication. In fact, it is known if $u \in \text{dom}\Delta_{\mu}$ then $u^2 \notin \text{dom}\Delta_{\mu}$ \cite{S}. 

The Poisson problem on $\sg$ can be uniquely solved via the $\textit{Green's}$ $\textit{function}$ defined on $\sg$ by 
\begin{equation}
    G\qty(x,y) = \sum\limits_{j=1}^{\infty}\sum\limits_{i=1}^{\infty}\lambda_{i}^{-1}\psi_{i}\qty(x)\psi_{i}\qty(y)\label{eq: explicit formula for green's function},
\end{equation}
where  $\{\lambda_i\}$ is the sequence of eigenvalues of $-\Delta_\mu$ and $\qty{\psi_i}$ is the corresponding set of orthonormal eigenfunctions. It follows that $$ -\Delta_\mu u = f, u|_{V_0} = 0 \iff u\qty(x) = \int_{\sg}G\qty(x,y)f\qty(y)\,\dd{\mu}\qty(y)$$ for any continuous function $f$.

From here on, we will routinely use the Green operator $\mathcal{G}$ whose action on $f \in \text{dom}\Delta_{\mu}$ is given by 
\begin{align}
    \mathcal{G}\qty(f)\qty(x) = -\int_{SG}G\qty(x,y)f\qty(y)\,\dd{\mu}\qty(y) \label{eq: Green's operator explicit formula}.
\end{align}   
Lastly, we will need two notions of derivatives required for the construction of polynomials on $SG$. The \textit{normal derivative} $\partial_nu\qty(q_i)$ of $u$ at $q_i$ is given by
\begin{align}
    \partial_nu\qty(q_i) = \lim\limits_{m \to \infty}\qty(\frac{5}{3})^m\qty(2u\qty(q_i) - u\qty(F^{m}_i q_{i+1}) - u\qty(F^{m}_i q_{i-1})),
\end{align}
and the \textit{tangential derivative} $\partial_Tu\qty(q_i)$ of $u$ at $q_i$ is given by
\begin{align}
    \partial_{T}u\qty(q_i) = \lim\limits_{m\to\infty}5^m\qty(u\qty(F^{m}_i q_{i+1}) - u\qty(F^{m}_iq_{i-1})).
\end{align}


\subsection{Polynomials on \texorpdfstring{$\sg$}{SG}}\label{sec:polynomials}
We will develop a theory of SOPs based on the polynomials introduced in the $\sg$ setting in \cite{BGSKSY, NSTY}. Recall that for any nonnegative integer $j\geq 0$, $f: SG \mapsto \mathbb{R}$ is a said to be a \textit{j-degree polynomial} if and only if $\Delta_\mu^{j+1}f  =  0$ (but $\Delta_\mu^{j}f\neq  0$). 
In other words, $f$ is $j$-harmonic but not ($j-1$)-harmonic. We denote the space of $j$-harmonic functions as $\mathcal H_j$. The polynomials $P_{j,k}^{\qty(l)}$ given below form a basis for the  $3\qty(j+1)$ dimensional space $\mathcal{H}_j$, and can be viewed as the analogs of $\frac{x^j}{j!}$.  Consequently, they are termed \textit{monomials} and the basis $\qty{P_{j,k}^{\qty(l)}}$ is termed the \textit{monomial basis}. The monomials are characterized by the following identities: 
\begin{gather*}
    \begin{cases}
    \Delta^n_\mu P_{j,k}^{\qty(l)}\qty(q_l)&= \delta_{nj}\delta_{k1},\\ \Delta^n_\mu\partial_nP^{\qty(l)}_{j,k}\qty(q_l)&= \delta_{nj}\delta_{k2},\\ 
    \Delta^n_\mu\partial_TP^{\qty(l)}_{j,k}\qty(q_l)&= \delta_{nj}\delta_{k3}.
    \end{cases}
    \end{gather*}

Note that $P^{\qty(l_1)}_{j, k} = R\circ P^{\qty(l_2)}_{j, k}$ where $R$ is the rotation in dihedral group $D_3$ that takes $l_1$ to $l_2$. As a consequence, we proceed by fixing $l = 0$. For simplicity, we will denote $P_{j,k}^{\qty(0)}$ by $P_{j,k}$. Furthermore, the monomials split into three families parametrized by $k$. The families corresponding to $k=1$ and $2$ are symmetric across the line joining $q_0$ and $F_1q_2$, while the $k=3$ family is anti-symmetric across this line. Furthermore, the $k=1$ and $3$ families behave like the even and odd monomials on $\RR$ respectively. The monomials satisfy the following scaling properties from \cite[Equations 2.4-2.6]{NSTY}: 
\begin{gather*}
\begin{cases}
P_{j,1}\qty(F_0^m\qty(x))&=5^{-jm}P_{j,1}\qty(x),\\
P_{j,2}\qty(F_0^m\qty(x))&=\qty(\frac 35)^m5^{-jm}P_{j,2}\qty(x),\\
P_{j,3}\qty(F_0^m\qty(x))&=5^{-\qty(j+1)m}P_{j,3}\qty(x).
\end{cases}
\end{gather*}
For $j\geq 0$, let  $\alpha_j, \beta_j, \gamma_j, \eta_j$ be given by 
\begin{align}
    \alpha_j = P_{j,1}\qty(q_1), \beta_j = P_{j,2}\qty(q_1), \gamma_j = P_{j,3}\qty(q_1), \eta_j = \partial_nP_{j,1}\qty(q_1)
\end{align}
The following recursion relations were proved in \cite[Theorem 2.3, Theorem 2.12]{NSTY}.
\begin{lemma}\label{lem:recabg}
With the initial data $\alpha_0 = 1,\ \alpha_1 = \frac16, \ \beta_0 = -\frac12, \ \eta_0 = 0, \ \partial_nP_{0,2}\qty(q_1)=\partial_n P_{0,2}\qty(q_2) = -\frac12$, we have
\begin{gather}\label{eq: abg}
    \begin{cases}
    \alpha_j = \frac{4}{5^j - 5}\sum\limits_{l=1}^{j-1}\alpha_{\qty(j-l)}\alpha_l, & j \geq 2\\
    \beta_j = \frac{2}{15\qty(5^j - 1)}\sum\limits_{l=0}^{j-1}\qty(3\cdot 5^{j-l} - 5^{l+1} + 6)\alpha_{\qty(j-l)}\beta_l, & j \geq 1\\
    \gamma_j = 3\alpha_{j+1}, & j \geq 0\\
    \eta_j = \frac{5^j +1}{2}\alpha_j + 2\sum\limits_{l=0}^{j-1}\eta_l\beta_{\qty(j-l)}, & j \geq 1.
    \end{cases}
\end{gather}
In addition,   
$\partial_nP_{j,2}\qty(q_1) =\partial_n P_{j,2}\qty(q_2)= -\alpha_j$ for $j \ge 1$ and $\partial_nP_{j,3}\qty(q_1) =-\partial_nP_{j,3}\qty(q_2)= 3\eta_{j+1}$ for $j \geq 0$. 
\end{lemma}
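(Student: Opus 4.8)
The plan is to derive all four recursions from two structural facts about the monomials: the \emph{tower property} under the Laplacian and the \emph{self-similar decomposition} of a $j$-harmonic function across the three first-level cells. First I would record the tower property $\Delta_\mu P_{j,k} = P_{j-1,k}$, which is immediate from the defining identities: if $g = \Delta_\mu P_{j,k}$ then $\Delta^n_\mu g(q_0) = \Delta^{n+1}_\mu P_{j,k}(q_0) = \delta_{(n+1)j}\delta_{k1} = \delta_{n,j-1}\delta_{k1}$, and likewise for the normal- and tangential-derivative data, so $g = P_{j-1,k}$. Together with the base values (in particular $P_{0,1}\equiv 1$, giving $\alpha_0 = 1$) this pins down the initial data. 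The relation $\gamma_j = 3\alpha_{j+1}$ I would obtain separately from the interplay between the antisymmetric ($k=3$) family and the tangential derivative of the symmetric ($k=1$) family, consistent with the final assertions of the lemma about $\partial_n P_{j,3}$.

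The heart of the argument is self-similarity. Restricting a monomial to the three cells $F_i(\sg)$, each $P_{j,k}\circ F_i$ is again $j$-harmonic, hence expands in the monomial basis $\{P_{i',k'}^{(l)}\}_{i'\le j}$. On the distinguished cell $F_0(\sg)$ the scaling identities stated above give the restriction outright. On $F_1(\sg)$ and $F_2(\sg)$ I would determine the expansion coefficients by applying the Laplacian scaling $\Delta_\mu(u\circ F_i) = \tfrac15 (\Delta_\mu u)\circ F_i$ and reading off the values and Laplacian-values at the three boundary vertices $q_0,q_1,q_2$; the symmetry of the $k=1,2$ families and antisymmetry of the $k=3$ family across the axis through $q_0$ cut the number of unknowns roughly in half.

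Finally I would impose the matching conditions at the junction points $F_0q_1, F_0q_2, F_1q_2$: continuity of the monomial and, via the Gauss--Green formula, the cancellation of the normal derivatives contributed by adjacent cells (there is no atom since polynomials are smooth). Evaluating continuity at a junction expresses the interior value $\alpha_j$ (respectively $\beta_j$) as a sum over products of lower-order data drawn from the two cell expansions; this bilinear coupling is exactly the origin of the convolution sums $\sum_l \alpha_{j-l}\alpha_l$ and $\sum_l(\cdots)\alpha_{j-l}\beta_l$. Collecting the scaling constants $5^{-j}$ and $(3/5)^m$ attached to the $k=1$ and $k=2$ families then produces the explicit prefactors $\tfrac{4}{5^j-5}$ and $\tfrac{2}{15(5^j-1)}$ as well as the weight $3\cdot 5^{j-l}-5^{l+1}+6$. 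The normal-derivative sequence $\eta_j$ requires summing the geometric contributions of $\partial_n$ under refinement, which yields the factor $\tfrac{5^j+1}{2}$ together with the remaining convolution term.

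I expect the main obstacle to be the careful bookkeeping of normal derivatives across the refinement: unlike continuity, the Gauss--Green balance mixes the scaling exponents of different families, and extracting the precise combinatorial weight in the $\beta_j$ recursion (and in the $\eta_j$ recursion) is where the computation is most delicate. The antisymmetric $k=3$ family and the normal-derivative data are the subtle cases, since they depend on the tangential/normal structure rather than on mere point values.
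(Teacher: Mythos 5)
The paper does not actually prove this lemma: it is quoted verbatim from \cite[Theorems 2.3 and 2.12]{NSTY}, so there is no in-paper argument to compare against. Measured against the proof in that source, your strategic outline is on target: the tower property $\Delta_\mu P_{j,k}=P_{j-1,k}$, the expansion of $P_{j,k}\circ F_i$ in the monomial basis at a vertex with coefficients read off from the jet $\qty(\Delta^l u(q_i),\ \Delta^l\partial_n u(q_i),\ \Delta^l\partial_T u(q_i))$ together with the scaling $\Delta_\mu(u\circ F_i)=\tfrac15(\Delta_\mu u)\circ F_i$, and the closing of the system by continuity and normal-derivative cancellation at the junction points is exactly how the convolution identities arise there. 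Your observation that a $j$-harmonic function is determined by its $3(j+1)$ vertex jet data is what makes the expansion step legitimate.

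That said, what you have written is a plan rather than a proof, and the gaps are precisely at the places you flag as delicate. First, the prefactors $\tfrac{4}{5^j-5}$, $\tfrac{2}{15(5^j-1)}$ and the weight $3\cdot5^{j-l}-5^{l+1}+6$ are asserted, not derived; the reader has no way to check that your matching conditions produce these rather than some other constants, and this is where sign and normalization errors typically occur (e.g.\ the boundary term convention in the Gauss--Green formula and the factor $\tfrac32$ in the pointwise Laplacian). Second, the relation $\gamma_j=3\alpha_{j+1}$ is dispatched in one sentence with no identifiable mechanism; in the source it follows from a separate Gauss--Green/symmetrization argument, and ``interplay between the antisymmetric family and the tangential derivative of the symmetric family'' does not yet constitute an argument. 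Third, the $\eta_j$ recursion and the closing statements about $\partial_nP_{j,2}$ and $\partial_nP_{j,3}$ are likewise only gestured at. To turn this into a proof you would need to write down the cell expansions explicitly, impose the matching conditions as a solvable linear system in the unknown jet data, and verify the base cases; alternatively, for the purposes of this paper, a citation to \cite{NSTY} is the intended resolution.
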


\subsection{Zeroes of Polynomials}\label{zerospoly}
This section deals with zeros of functions defined on $SG$. While not directly related to the topics of orthogonal polynomials on $SG$, these results are interesting in their own right and could lead to more investigations on topics such as the nodal sets of polynomials and eigenfunctions on $SG$. The first result uses the topological structure of $SG$ to derive a property of zeros for continuous functions.
\begin{proposition}\label{pr:zeros}
Let $f$ be a continuous function defined on $SG$ with finitely many zeros. Suppose that $Z=\qty{z_k}_{k=1}^m$ is the set of the zeros of  $f$, and set $Z_0=Z\cap V^*$. Then for any connected component $D$ in $SG\setminus Z_0$, either $f\ge0$ on $D$ or $f\le0$ on $D$.
\end{proposition}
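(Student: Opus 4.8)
The plan is to show that $\operatorname{sign}(f)$, after a harmless extension across the interior zeros, is a locally constant function on $D$, so that connectedness of $D$ forces it to be constant. First I would record the two elementary facts that drive everything. Since $Z$ is finite, every zero of $f$ is isolated, and $f$ is nonzero -- hence of locally constant sign -- on $\sg\setminus Z$. Since $Z_0\subseteq Z$ is finite it is closed, so $\sg\setminus Z_0$ is open; as $\sg$ is compact and locally connected, each connected component $D$ of $\sg\setminus Z_0$ is itself open. The only zeros of $f$ that can lie in $D$ are therefore the finitely many non-vertex zeros $W:=(Z\setminus V^*)\cap D$, each of which is isolated.

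The crux of the argument is a purely topological statement about $\sg$: a point $c\notin V^*$ is not a \emph{local cut point}. Concretely, I would prove that the cells containing $c$ form a neighbourhood basis of $c$ and that removing $c$ does not disconnect them. Because $c$ is not a junction point, for every $m$ there is a \emph{unique} $m$-cell $C_m(c)=F_\omega(\sg)$ containing $c$, and $c$ is never one of its boundary vertices; since adjacent cells meet only at vertices (finite ramification), $c$ lies in the interior of $C_m(c)$, and $\operatorname{diam} C_m(c)\to 0$, so $\{C_m(c)\}$ is a neighbourhood basis. For connectedness of $C_m(c)\setminus\{c\}$, I would use self-similarity to reduce to $\sg\setminus\{c'\}$ with $c'=F_\omega^{-1}(c)\notin V^*$, and then decompose $\sg=F_0(\sg)\cup F_1(\sg)\cup F_2(\sg)$: the point $c'$ lies in exactly one of these three sub-cells (otherwise it would be a junction, hence in $V^*$), the other two sub-cells together with their shared vertex form a connected set attached to the first sub-cell at two vertices different from $c'$, and one recurses into the sub-cell containing $c'$. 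Closing this recursion uses $\bigcap_m C_m(c')=\{c'\}$, so every $y\ne c'$ lies outside some $C_m(c')$ and is thereby connected to the bulk within $\sg\setminus\{c'\}$. This is exactly the property that fails at junction points -- a punctured neighbourhood of a vertex splits into the pieces coming from the distinct cells meeting there -- which is precisely why the vertex zeros are removed to form the components $D$.

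With this sub-lemma in hand the conclusion is quick. For each $c\in W$ choose $m$ so large that $U:=C_m(c)$ satisfies $U\cap Z=\{c\}$ (possible since the zeros are isolated and $c\notin Z_0$); then $U\setminus\{c\}$ is connected with $f$ nonzero on it, so $f$ has a single sign there, and $U\setminus\{c\}\subseteq D$ since it is a connected subset of $\sg\setminus Z_0$ meeting $D$. Define $\sigma:=\operatorname{sign}(f)$ on $D\setminus W$ and extend it to each $c\in W$ by the common sign of $f$ on the associated $U\setminus\{c\}$. Then $\sigma$ is locally constant on all of $D$: this is clear where $f\ne0$, and at each $c\in W$ the punctured cell $U\setminus\{c\}$ witnesses local constancy. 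Since $D$ is connected, $\sigma$ is constant, equal to some $s\in\{+1,-1\}$; as the only points of $D$ where $f$ vanishes are those of $W$, we get $f\ge 0$ on $D$ when $s=+1$ and $f\le 0$ on $D$ when $s=-1$. The main obstacle is the topological sub-lemma identifying the non-vertex points as exactly the non-(local-)cut points of $\sg$; once that cut-point structure is established, the sign-locally-constant reasoning is routine.
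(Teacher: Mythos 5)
Your proof is correct, and it takes a genuinely different route from the paper's. The paper argues by contradiction: assuming $f$ takes both signs on $D$, it picks an edge-path in some $\Gamma_m$ joining a positive point to a negative point, runs a bisection that repeatedly produces a sign-changing pair at two-thirds the previous distance, until the pair lies in the same or adjacent $m$-cells; the finite ramification of $\sg$ then supplies two essentially disjoint paths between them, and the intermediate value theorem yields two distinct zeros closer together than a separation constant fixed at the outset, contradicting finiteness of $Z$. Your argument instead isolates the real topological content as a clean sub-lemma: the local cut points of $\sg$ are exactly the vertices, so a non-vertex zero does not locally disconnect its cell neighbourhood, and $\operatorname{sign}(f)$ extends to a locally constant, hence constant, function on the open connected component $D$. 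This is more transparent and arguably more robust than the paper's metric bisection (whose separation constant $s$ is delicate to even define correctly for a continuous $f$), and it makes explicit \emph{why} only the vertex zeros $Z_0$ need to be deleted to form the components. The one step you should firm up is the recursive closure of the connectedness claim for $C_m(c)\setminus\{c\}$: the cleanest version is to write $\sg\setminus\{c'\}=\bigcup_m\bigl(\sg\setminus C_m(c')\bigr)$ as an increasing union of connected sets (each $\sg\setminus C_m(c')$ is connected because the level-$m$ cells other than $C_m(c')$, with the boundary vertices of $C_m(c')$ deleted, still form a chain-connected family), and an increasing union of connected sets is connected. With that detail supplied, your proof is complete.
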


\begin{proof}
Suppose $f$ has finitely many zeros and $f\qty(z_1)>0$, $f\qty(z_2)<0$ with $z_1,\,z_2\in D$. Define $$s=\min\limits_{\substack{x\neq y\in SG\\f(x)\neq f(y)\neq 0}}|f(x)-f(y)|,$$ and choose $2^{-n}<s$. By considering small enough neighborhoods of the two points, we can assume without loss of generality that $z_1$ and $z_2$ are both junction points in $V_m$ and $m>n$. Then there exists a simple curve $L:\qty[0,1]\rightarrow SG$ from $z_1$ to $z_2$ along edges in $\Gamma_m$ that lie in $D$ with constant speed. If $f\circ L\equiv0$ on $\qty[\frac13,\, \frac23]$, then f has infinitely many zeros and contradiction arises. Otherwise there exists $t_0\in \qty[\frac13,\, \frac23]$ and  $f\circ L\qty(t_0)\neq 0$. Then we can find $z_1'$, $z_2'$ on the curve $L$ such that $f\qty(z_1')f\qty(z_2')<0$ and $d\qty(z_1',z_2')\le\frac23 d\qty(z_1, z_2)$, where $d$ denotes the distance along $L$ and is bounded below by the Euclidean metric. Then denote the new $z_i'$ by $z_i$ and continue this process until $d\qty(z_1,z_2)<2^{-m-1}$. Then $z_1$ and $z_2$ lie on edges of the same or adjacent $m$-cells. In either case we can find two curves from $z_1$ to $z_2$ that intersect at most at $z_1$, $z_2$ and a junction point $z$ that lies in $L$, but none of them belongs to $Z$. Hence by $IVT$ we get two zeros of $f$ with distance $<2\cdot2^{-m}\le 2^{-n}<s$, which is a contradiction.
\end{proof}

Next we consider the zeros set of analytic functions on $SG$. First recall the following definition. 
\begin{definition}\label{def:analytic} An entire analytic function is a function defined by power series $\sum\limits^3_{k=1}\sum\limits^\infty_{j=0}c_{j, k}P_{j,k}\qty(x)$ such that $|c_{j, k}|=O\qty(R^j)$ for some $R<\lambda_2$, where $\lambda_2$ is the second nonzero Neumann eigenvalue of the Laplacian. 
\end{definition}
\begin{proposition}\label{pr:analytic_zero}
Let $f=\sum\limits^3_{k=1}\sum\limits^\infty_{j=0}c_{j, k}P_{j,k}\qty(x)$ be a nonzero entire analytic function, then the zero set $Z$ of $f$ is compact and nowhere dense in $SG$.
\end{proposition}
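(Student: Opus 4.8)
The plan is to establish the two topological claims separately: that the zero set $Z$ is closed (hence compact, since $SG$ is compact) and that $Z$ has empty interior (the key half of nowhere density, which for a closed set is equivalent to being nowhere dense). First I would argue that $f$ is continuous on $SG$. This follows from the analyticity hypothesis: the growth condition $|c_{j,k}|=O(R^j)$ with $R<\lambda_2$ forces the series $\sum_{k}\sum_{j}c_{j,k}P_{j,k}$ to converge uniformly, since the monomials $P_{j,k}$ satisfy decay estimates controlled by the eigenvalue $\lambda_2$ (this is precisely the point of the definition of the radius of convergence in terms of $\lambda_2$, as in \cite{NSTY}). Granting uniform convergence, $f$ is a uniform limit of continuous functions and is therefore continuous; then $Z=f^{-1}(\{0\})$ is the preimage of a closed set, hence closed, and compactness is immediate.

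The substantive part is to show $Z$ is nowhere dense, i.e.\ that $Z$ contains no nonempty open set. The natural strategy is a unique-continuation / identity-theorem argument: I would show that if $f$ vanishes on some open subset $U\subseteq SG$, then $f\equiv 0$, contradicting the hypothesis that $f$ is nonzero. Any nonempty open $U$ contains a cell $F_\omega(SG)$ for some word $\omega$. The aim is then to propagate vanishing on this cell to all of $SG$. The mechanism should be that an entire analytic function is determined by its "jet" of data at a single point or cell — the values $\Delta_\mu^j f$, $\partial_n \Delta_\mu^j f$, $\partial_T \Delta_\mu^j f$ at a boundary vertex — together with the self-similar scaling identities for the monomials $P_{j,k}$ recorded earlier in the excerpt. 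If $f$ vanishes identically on a cell, then all these derivative data vanish at the relevant boundary vertex of that cell, and analyticity should force all coefficients $c_{j,k}$ to be zero.

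The cleanest route to this is to prove a unique continuation statement: an entire analytic function vanishing on an open set has all its coefficients vanishing. Concretely, I would use that on a cell $F_0^m(SG)$ the scaling relations let one recover $c_{j,k}$ from the local behavior of $f$ near $q_0$; vanishing of $f$ on a full cell kills every $\Delta_\mu^j f(q_0)$ and the associated normal/tangential derivatives, and since the map from coefficient data to these jet values is (by the monomial characterization) triangular and invertible, every $c_{j,k}=0$. An alternative and perhaps more robust packaging is to invoke a known unique-continuation principle for analytic functions on $SG$: analyticity with radius tied to $\lambda_2$ is exactly the hypothesis under which such functions behave like genuine power series, and vanishing on an open set forces the whole function to vanish, by the same logic that makes a one-variable power series vanishing on an interval identically zero.

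I expect the main obstacle to be making the unique-continuation step rigorous in the fractal setting, where there is no clean Taylor expansion around an arbitrary interior point and the geometry of cells is more delicate than intervals on $\R$. Specifically, the difficulty is to convert "$f=0$ on an open set" into "$f=0$ on a cell $F_\omega(SG)$" and then to extract the vanishing of the infinitely many coefficients $c_{j,k}$ in a controlled way, using the scaling laws and the growth bound $R<\lambda_2$ to justify term-by-term manipulation and convergence. The closedness and compactness of $Z$ are routine once continuity is in hand; essentially all the real work lies in the nowhere-density claim and, within it, in this identity-theorem-style argument.
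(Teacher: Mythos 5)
Your proposal is correct and follows essentially the same route as the paper: reduce nonempty interior of $Z$ to vanishing on a cell $F_\omega(SG)$, then apply unique continuation for entire analytic functions to conclude $f\equiv 0$. The paper disposes of the unique-continuation step you flag as the main obstacle simply by citing \cite[Theorem 3.6]{NSTY} (the local power series on a cell extends uniquely to an entire analytic function on $SG$), which is exactly the ``alternative packaging'' you mention, so no jet-data argument is needed.
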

\begin{proof}
Suppose $Z$ has an interior point, then $Z$ must contain $F_\omega\qty(SG)$ for some $\omega$, i.e. its local power series expansion in $F_\omega\qty(SG)$ is identically zero. Note that it has a unique extension to an entire analytic function in $SG$ by \cite[Theorem 3.6]{NSTY},  $f$ must be identically zero in $SG$.
\end{proof}

We end this section by  describing the behavior of  zeros near the point $q_0$ for an entire analytic function $f$. In addition, under the more restrictive assumption that the following inequality
\begin{gather}P_{j,1}\qty(x)>0 \;\forall x\neq q_0,\;\forall j>0 \label{conjecture}
\end{gather} which was conjectured in \cite{NSTY}, is true, then we can prove a slightly stronger result.

\begin{theorem}\label{th:zeros}
Suppose $f\qty(x)=\sum\limits^\infty_{j=t_1}c_{j, 1}P_{j,1}\qty(x)+\sum\limits^\infty_{j=t_2}c_{j, 2}P_{j,2}\qty(x)+\sum\limits^\infty_{j=t_3}c_{j, 3}P_{j,3}\qty(x)$ where $c_{t_1,1}$, $c_{t_2,2}$ and $c_{t_3,3}$ are nonzero and denote by $Z$ the  zero set of $f$. If $ t_3<t_1-1$ and $t_3<t_2$, then $f$ has infinitely many zeros that have a limit point $q_0$. 

Furthermore,  suppose that  conjecture $\eqref{conjecture}$ is true, $t_1\le t_2$ and $t_1\le t_3$. Then $q_0$ has a neighborhood $U$ such that $Z\cap U\subset\qty{q_0}$.
\end{theorem}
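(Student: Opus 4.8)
The plan is to study $f$ on the nested cells $F_0^m(\sg)$, which form a neighborhood basis at $q_0$, and to read off the dominant monomial family from the scaling relations. Writing $x = F_0^m(y)$ and substituting into the three scaling identities, the leading $k=1$, $k=2$, $k=3$ terms contribute to $f\bigl(F_0^m(y)\bigr)$ at the orders $5^{-t_1 m}$, $5^{-(t_2+1-\log_5 3)m}$, and $5^{-(t_3+1)m}$ respectively (using $(3/5)^m = 3^m 5^{-m}$ for the middle family). The entire-analytic hypothesis $|c_{j,k}| = O(R^j)$ with $R<\lambda_2$, together with the $\|P_{j,k}\|_\infty$ growth bounds from \cite{NSTY}, guarantees that after rescaling by the reciprocal of the slowest decay rate the subdominant tail vanishes uniformly on $\sg$. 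The hypotheses pick out the surviving family: if $t_3<t_1-1$ and $t_3<t_2$ then $t_3+1<t_1$ and $t_3+1\le t_2<t_2+1-\log_5 3$, so the $k=3$ family dominates; if $t_1\le t_2$ and $t_1\le t_3$ then $t_1<t_2+1-\log_5 3$ and $t_1<t_3+1$, so the $k=1$ family dominates.

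For the first assertion I would establish
\[
5^{(t_3+1)m}\,f\bigl(F_0^m(y)\bigr)\longrightarrow c_{t_3,3}\,P_{t_3,3}(y)
\]
uniformly in $y\in\sg$. Since $P_{t_3,3}$ is a nonzero monomial that is antisymmetric across the axis through $q_0$ and $F_1 q_2$, it takes strictly positive and strictly negative values at a mirror pair $y^+,y^-$. Hence for all large $m$ the values $f\bigl(F_0^m(y^\pm)\bigr)$ inherit the signs of $c_{t_3,3}P_{t_3,3}(y^\pm)$ and are opposite to one another. Transporting a path from $y^+$ to $y^-$ in $\sg\setminus\{q_0\}$ by $F_0^m$ keeps it inside $F_0^m(\sg)\setminus\{q_0\}$ (as $F_0^m(y)=q_0$ only for $y=q_0$), so the intermediate value theorem yields a zero $z_m\in F_0^m(\sg)\setminus\{q_0\}$. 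Because $\operatorname{diam}F_0^m(\sg)\to 0$ we get $z_m\to q_0$; and if only finitely many $z_m$ were distinct, one value would lie in $\bigcap_k F_0^{m_k}(\sg)=\{q_0\}$, contradicting $z_m\neq q_0$. Thus $f$ has infinitely many zeros accumulating at $q_0$.

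For the second assertion I would use the analogous limit
\[
5^{t_1 m}\,f\bigl(F_0^m(y)\bigr)\longrightarrow c_{t_1,1}\,P_{t_1,1}(y)
\]
uniformly in $y$, and invoke conjecture \eqref{conjecture}. The difficulty is that $P_{t_1,1}$ vanishes at $q_0$ for $t_1>0$, so uniform closeness to the limit does not by itself bound $f$ away from $0$ near $q_0$. I would resolve this by decomposing the neighborhood $U=F_0^M(\sg)$ into the shells $F_0^m\!\left(F_1(\sg)\cup F_2(\sg)\right)$ for $m\ge M$, whose union with $\{q_0\}$ is all of $U$. On the compact set $K=F_1(\sg)\cup F_2(\sg)$ we have $q_0\notin K$, so \eqref{conjecture} gives $P_{t_1,1}\ge\delta>0$ on $K$ (with $\delta=1$, $K=\sg$ in the trivial case $t_1=0$, where $P_{0,1}\equiv1$). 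Choosing $M$ so large that the uniform error is below $|c_{t_1,1}|\delta/2$ for all $m\ge M$, the rescaled value $5^{t_1 m}f\bigl(F_0^m(w)\bigr)$ keeps the sign of $c_{t_1,1}$ for every $w\in K$, so $f$ is nonvanishing on each shell, hence on $U\setminus\{q_0\}$.

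The main obstacle is the uniform-convergence step: making precise that, after rescaling, the entire-analytic bound forces the entire subdominant tail (across all three families and all higher indices) to vanish uniformly, which needs the $\|P_{j,k}\|_\infty$ estimates of \cite{NSTY} and a dominated-convergence argument on the double series. The only other delicate point is the degeneracy of $P_{t_1,1}$ at $q_0$ in the second part, which the shell decomposition circumvents by keeping the evaluation points a fixed distance from $q_0$.
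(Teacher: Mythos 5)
Your proposal is correct and follows essentially the same route as the paper: both exploit the scaling identities of $P_{j,k}$ under $F_0^m$ to isolate the dominant family, control the subdominant tails via the entire-analyticity bound together with the $\|P_{j,k}\|_\infty$ estimates of \cite{NSTY}, obtain the accumulating zeros from the antisymmetry of $P_{t_3,3}$ (the paper evaluates at the mirror pair $F_0^n(q_1)$, $F_0^n(q_2)$ and applies the intermediate value theorem along the bottom edge of the cell), and for the second claim use conjecture \eqref{conjecture} to bound $P_{t_1,1}$ below on the shell $F_1(\sg)\cup F_2(\sg)$, exactly as you do. Your reformulation as uniform convergence of $5^{(t_3+1)m}f(F_0^m(\cdot))$ and $5^{t_1 m}f(F_0^m(\cdot))$ is only a cosmetic repackaging of the paper's direct pointwise estimates.
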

\begin{proof}
For the first claim, note $\gamma_{t_3}>0$, and WLOG assume $c_{j, 3}>0$. Then for $n$ large enough, we have
\begin{align}
f\qty(F_0^n\qty(q_1))&\ge c_{t_3,3}5^{-\qty(t_3+1)n}\gamma_{t_3}-\sum\limits^\infty_{j=t_1}5^{-jn}|c_{j, 1}|\|P_{j,1}\|_{L^\infty}\\
             &\;-\sum\limits^\infty_{j=t_2}\qty(\frac 35)^n5^{-jn}|c_{j, 2}|\|P_{j,2}\|_{L^\infty}-\sum\limits^\infty_{j=t_3+1}5^{-\qty(j+1)n}|c_{j, 3}|\|P_{j,3}\|_{L^\infty} \\
             &\ge c_{t_3,3}5^{-\qty(t_3+1)n}\gamma_{t_3}-C\qty(5^{-t_1n}+\qty(\frac 35)^n5^{-t_2n}+5^{-\qty(t_3+2)n})>0.
\end{align}
Similarly, and using the fact that $P_{j,3}$ is  anti-symmetric, we have $f\qty(F_0^n\qty(q_2))<0$ when $n$ is large enough. Hence for large n, there always exists a zero of $f$ on the straight line connecting $F_0^n\qty(q_2)$ and $F_0^n\qty(q_1)$. 

As for the second claim, note that if conjecture $\eqref{conjecture}$ is true, then there exists $c>0$ such that $P_{t_1,1}>c$ on $F_1\qty(SG)\cup F_2\qty(SG)$. Hence by similar argument,
\begin{align}
f\qty(F_0^n\qty(x))\ge 5^{-t_1n}c_{t_1,1}c-C\qty(5^{-\qty(t_1+1)n}+\qty(\frac 35)^n5^{-t_2n}+5^{-\qty(t_3+1)n})>0
\end{align}
for any $x\in F_1\qty(SG)\cup F_2\qty(SG)$, when $n$ is large enough.
\end{proof}
\begin{remark}
Using this method, we can express an entire analytic function in terms of local power series expansion and study the local behavior of zeros near a junction point.
\end{remark}

\section{Sobolev-Legendre Orthogonal Polynomials on \texorpdfstring{$\sg$}{SG}}\label{sec:sopsg}
In this section, we will consider the Sobolev inner product and their corresponding orthogonal polynomials. More specifically, given $\chi>0$ and recalling that  $\mu$ denotes the standard self-similar probability measure on $SG$, we consider the Sobolev inner product 
\begin{align}
  \ip{f}{g}_S=\int_{SG}f\qty(x)g\qty(x)\, \dd{\mu}\qty(x)+\chi \int_{SG} \Delta f\qty(x)\Delta g\qty(x)\, \dd{\mu}\qty(x). 
\end{align}\label{eq: basic sobolev inner product}
We denote by $W^{1,2}\qty(SG)$ the Hilbert space on $SG$ corresponding to the Sobolev inner product defined above. $W^{1,2}$ is the Sobolev space of $L^2$  functions whose Laplacian belongs to $L^2$, we refer to \cite{StriFctSp} for more on function spaces on fractals. The orthogonal polynomials we will construct will allow one to solve the following optimization problem
$$\operatorname{argmin}\|f-g_n\|_{S},$$ where $f\in W^{1,2}\qty(SG)$ and $g_n$ is a polynomial of degree at most $n$ on $SG$.

\subsection{General properties}\label{sec3.1}
The  Sobolev-Legendre OPs exhibit different properties depending on the family of monomials they are generated from. More precisely, the polynomials corresponding to the families $k=2$ or $3$ have the same properties but differ from the $k=1$ family.
In this section we focus on properties that are common to the three families while, Section~\ref{sec3.2} focuses on the cases $k=2$ or $3$, and we defer the $k=1$ case to Section~\ref{sec: SOP wrt k=1}. Finally, we consider Sobolev-Legendre OPs with inner products involving higher powers of the Laplacian in Section~\ref{subsec:higherordersop}.

\begin{definition}\label{def:basicsop}
For fixed $k=1,2$ or $3$, we define the  Sobolev-Legendre orthogonal polynomials (with respect to $q_0$) to be the set $\qty{s_{n,k}\qty(x;\chi)}_{n=0}^{\infty}$ of orthogonal polynomials obtained by applying the Gram-Schmidt to the sequence of monomials $\qty{P_{n,k}}_{n=0}^\infty$, that is, $$s_{n,k}\qty(x; \chi)=P_{n,k}\qty(x)-\sum\limits_{\ell=0}^{n-1}\xi_\ell^2\ip{P_{n, k}}{s_{\ell, k}}_S\, s_{\ell, k}\qty(x).$$ Furthermore, there exists a set of coefficients $\qty{z_{\ell, n}}_{\ell=0}^n$ with $z_{n, n}=1$ such that $$s_{n,k}\qty(x; \chi)=P_{n,k}\qty(x)-\sum\limits_{\ell=0}^{n-1}\xi_\ell^2\ip{P_{n, k}}{s_{\ell, k}}_S\, s_{\ell, k}\qty(x)=P_{n,k}\qty(x)+\sum\limits_{\ell=0}^{n-1}z_{\ell, n}P_{\ell, k}\qty(x).$$ It follows that 
$$\ip{s_{\ell, n}}{s_{k,n}}_S=\xi_\ell^{-2}\delta_{\ell k}\quad \text{where}\quad \|s_{\ell, k}\qty(\cdot; \chi)\|_S^2=\xi_\ell^{-2}.$$

When there is no confusion about $k$ and $\chi$ we will simply write $\qty{s_{n}}_{n=0}^{\infty}$. The corresponding orthonormal polynomials will be denoted $\qty{S_{n, k}\qty(x; \chi)}_{n\geq 0}$ or $\qty{S_{n}}_{n\geq 0}$ when there is no confusion.

\end{definition}

\begin{remark}\label{rem:defLegendreop}
We recall that the Legendre OP (with respect to $q_0$) introduced in \cite{OST} are denoted by $\qty{p_{n,k}\qty(x)}_{n=0}^{\infty}$ ($\qty{p_{n}(x)}_{n=0}^{\infty}$ when there is no ambiguity) and the corresponding orthonormal Legendre OP are denoted by $\qty{Q_{n, k}}_{n\geq 0}$ ($\qty{Q_{n}}_{n\geq 0}$ when there is no ambiguity). 

In addition, 
$$p_{n,k}\qty(x)=P_{n,k}-\sum\limits_{\ell=0}^{n-1}d_\ell^2\ip{P_{n,k}}{p_{\ell, k}}_2\, p_{\ell, k}\qty(x)=P_{n,k}+\sum\limits_{\ell=0}^{n-1}\omega_{\ell, n}P_{\ell, k}\qty(x),$$ where $\qty{\omega_{\ell, n}}_{\ell=0}^n$ is a set of coefficients with $\omega_{n, n}=1$, and 
$$\ip{p_{\ell, k}}{f_{\ell', k}}=d_\ell^{-2}\delta_{\ell \ell'}\quad \text{where}\quad \|p_{\ell, k}\|^2=d_\ell^{-2}.$$
\end{remark}

We begin by proving some estimates on the norms of the polynomials $s_{n,k}$.

\begin{theorem}\label{thm:normestimate}
For $k=1, 2, 3$, and $n\geq 0$ we have the following estimates. 
$$\|p_{n,k}
\|_2^2<\|s_{n,k}\|_2^2<\|s_{n,k}\|_S^2< \|P_{n,k}\|_S^2=\|P_{n,k}\|_2^2+\chi \|_{n-1, k}\|_2^2.$$ In addition, for any $0<r<\infty$, there exist positive constants $c_1, c_r$ such that 
$$\|s_{n,k}\|_S<\qty(1+\chi)\qty(c_1 \qty(\qty(n-1)!)^{-\log5/\log 2} + c_rr^{-n}).$$ Consequently, $\lim\limits_{n\to \infty}\|s_{n,k}\|_S= \lim\limits_{n\to \infty}\|s_{n,k}\|_2 =0.$

\end{theorem}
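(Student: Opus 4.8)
The plan is to separate the finite chain of inequalities from the asymptotic estimate, and to assume throughout that $n\geq 1$ (for $n=0$ one has $s_{0,k}=p_{0,k}=P_{0,k}$ and all four quantities coincide, so only the non-strict reading survives there). Each inequality in the chain I would read as an instance of a Hilbert-space extremal property. The rightmost equality is immediate from the definition of the Sobolev norm together with $\Delta P_{n,k}=P_{n-1,k}$, giving $\|P_{n,k}\|_S^2=\|P_{n,k}\|_2^2+\chi\|P_{n-1,k}\|_2^2$. For $\|s_{n,k}\|_S^2<\|P_{n,k}\|_S^2$ I would invoke the Pythagorean identity in the $S$-inner product: since by Definition \ref{def:basicsop} the polynomial $s_{n,k}=P_{n,k}-\sum_{\ell<n}\xi_\ell^2\ip{P_{n,k}}{s_{\ell,k}}_S\,s_{\ell,k}$ is exactly the $S$-orthogonal part of $P_{n,k}$, one gets $\|P_{n,k}\|_S^2=\|s_{n,k}\|_S^2+\big\|\sum_{\ell<n}\xi_\ell^2\ip{P_{n,k}}{s_{\ell,k}}_S\,s_{\ell,k}\big\|_S^2$, and the projection term is nonzero because $P_{n,k}$ is not $S$-orthogonal to the lower monomials (already $\ip{P_{n,k}}{P_{0,k}}_S=\int_{\sg}P_{n,k}P_{0,k}\,d\mu\neq 0$). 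The middle inequality $\|s_{n,k}\|_2^2<\|s_{n,k}\|_S^2$ is just $\|s_{n,k}\|_S^2=\|s_{n,k}\|_2^2+\chi\|\Delta s_{n,k}\|_2^2$ with $\Delta s_{n,k}\neq 0$, which holds since $\deg s_{n,k}=n\geq 1$.

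The leftmost inequality $\|p_{n,k}\|_2^2<\|s_{n,k}\|_2^2$ is the one requiring care. Both $p_{n,k}$ and $s_{n,k}$ lie in the affine family $P_{n,k}+\sum_{\ell<n}c_\ell P_{\ell,k}$, and $p_{n,k}$ is the unique minimizer of $\|\cdot\|_2$ over that family (it is the $L^2$-orthogonal residual), so $\|p_{n,k}\|_2\le\|s_{n,k}\|_2$ with equality only if $s_{n,k}=p_{n,k}$. To rule this out I would argue by contradiction: if $s_{n,k}=p_{n,k}$, then the $S$-orthogonality of $s_{n,k}$ to the lower monomials, combined with the $L^2$-orthogonality of $p_{n,k}$, forces $\chi\ip{\Delta p_{n,k}}{P_{m,k}}_2=0$ for $0\le m\le n-2$; since $\chi>0$ this says $\Delta p_{n,k}$ is $L^2$-orthogonal to every polynomial of degree $\le n-2$ and hence is a scalar multiple of $p_{n-1,k}$. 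I expect this proportionality to fail, and verifying that $\Delta p_{n,k}$ is \emph{not} a multiple of $p_{n-1,k}$ — using the explicit structure of the Legendre OPs from \cite{OST} — is the delicate point in this half of the argument.

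For the asymptotic estimate I would use only the rightmost bound just established. From $\|P_{n,k}\|_2^2+\chi\|P_{n-1,k}\|_2^2\le(1+\chi)\big(\|P_{n,k}\|_2^2+\|P_{n-1,k}\|_2^2\big)$ and $\sqrt{1+\chi}\le 1+\chi$, one has $\|s_{n,k}\|_S\le\|P_{n,k}\|_S\le(1+\chi)\big(\|P_{n,k}\|_2+\|P_{n-1,k}\|_2\big)$, so everything reduces to decay estimates for the $L^2$ norms of the monomials (equivalently $L^\infty$, since $\mu$ is a probability measure). These I would extract from the quadratic recursion for $\alpha_j,\beta_j,\gamma_j,\eta_j$ in Lemma \ref{lem:recabg} together with the scaling relations for $P_{j,k}$. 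The exponent is fixed by the ansatz $\alpha_j\sim C\,r^j/(j!)^c$: inserting it into $\alpha_j=\frac{4}{5^j-5}\sum_{l=1}^{j-1}\alpha_{j-l}\alpha_l$ and using that $\sum_l\binom{j}{l}^{c}$ is governed by its central term $\sim 2^{cj}$, self-consistency demands that $2^{cj}/5^{j}$ stay bounded, i.e. $2^{c}=5$ and $c=\log 5/\log 2$. Making this rigorous yields $\|P_{n,k}\|_2\le c_1(n!)^{-\log 5/\log 2}+c_r r^{-n}$ for every $r$, the $r^{-n}$ term absorbing the geometric prefactor and the $(5^j-5)^{-1}$ denominators; feeding the $n$ and $n-1$ versions into the displayed bound gives the stated estimate with $(n-1)!$ as the dominant factorial, and the limit statement follows at once since $\|s_{n,k}\|_2\le\|s_{n,k}\|_S$. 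I expect the genuine obstacle to be precisely this monomial decay estimate: controlling the convolution-type quadratic recursion uniformly in $j$ so as to extract the clean factorial rate together with the claimed geometric remainder.
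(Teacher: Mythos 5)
Your handling of the two right-hand inequalities is exactly the paper's argument: the entire written proof consists of the Pythagorean identity $\|s_{n}\|_S^2= \|P_n\|_S^2-\sum_{\ell<n}\xi_{\ell}^2|\ip{P_n}{s_\ell}_S|^2<\|P_n\|_S^2$ together with $\Delta P_{n,k}=P_{n-1,k}$, after which everything else is deferred to \cite[Theorem 3.1]{OST}. In particular the monomial decay $\|P_{n,k}\|_2\le c_1((n-1)!)^{-\log 5/\log 2}+c_rr^{-n}$, which you propose to extract from the quadratic recursion of Lemma \ref{lem:recabg}, is imported wholesale from that reference. Your ansatz does identify the correct exponent $\log 5/\log 2$ via the balance $2^{cj}\approx 5^j$, but as you concede it is only a self-consistency heuristic; the intended route is simply to cite the known estimate, not to reprove it.

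The step in your plan that would actually fail is the leftmost strict inequality. Your minimality argument gives $\|p_{n,k}\|_2\le\|s_{n,k}\|_2$ with equality iff $s_{n,k}=p_{n,k}$, and your reduction of equality to ``$\Delta p_{n,k}$ is $L^2$-orthogonal to all polynomials of degree $\le n-2$, hence equals $p_{n-1,k}$'' is correct. But for $n=1$ that orthogonality condition is vacuous and equality genuinely occurs: since $\Delta P_{0,k}=0$, the Sobolev and $L^2$ inner products coincide on $\operatorname{span}\{P_{0,k},P_{1,k}\}$, so $s_{1,k}=p_{1,k}$ --- the paper records this explicitly in Remark \ref{remark: Convergence result not true for n<3}. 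Consequently no contradiction argument can establish strictness for all $n\ge 1$, and the chain must be read with $\le$ on the left for small $n$. For $n\ge 2$ strictness amounts to $\Delta p_{n,k}\ne p_{n-1,k}$, which one can attack through the coefficients $b_{n-1},c_{n-1}$ of Corollary \ref{corfnsnbn} (positivity of $c_{n-1}$ rules out $\Delta p_{n,k}=p_{n-1,k}$ once $n\ge 3$), but this is not verified in the paper either. None of this affects the displayed upper bound, the second estimate, or the limit statement, all of which rest only on $\|s_{n,k}\|_S\le\|P_{n,k}\|_S$.
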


\begin{proof}
We note that $\|s_n\|_S^2=\|s_n\|_2^2+\chi \|\lap s_n\|_2^2$. Moreover, it follows from the definition that $s_n=P_n-\sum\limits_{\ell=0}^{n-1}\xi_{\ell}^{2}\ip{P_n}{s_\ell}_S\, s_\ell.$ Consequently, 
$$\|s_{n}\|_S^2= \|P_n\|_S^2-\sum\limits_{\ell=0}^{n-1}\xi_{\ell}^2|\ip{P_n}{s_\ell}_S|^2<\|P_n\|_S^2=\|P_{n,k}\|_2^2+\chi \|P_{n-1,k}\|_2^2.$$  The rest of the proof follows from \cite[Theorem 3.1]{OST}.

\end{proof}

 The next two technical results will be useful in deriving a recurrence relation between the Sobolev-Legendre  and Legendre orthogonal polynomials.


\begin{lemma}\label{Lemma: Expansions for f}
For each $k\in \qty{1,2,3}$ fixed, and any integer $t\geq 1$, let $$f_{t+1,k} := -\int_{SG}G\qty(x,y)p_{t,k}\qty(y)dy. $$  Then for any $t \geq 2$, we have the following statements hold.
\begin{equation}\label{eq: dnftfor2or3atq1}
\begin{cases}
 \partial_n f_{t,k}\qty(q_1)=\partial_n f_{t,k}\qty(q_2)=0, \, \, \text{when}\, k=2, \, \text{or}\,\,  3 \\
  \partial_n f_{t,k}\qty(q_0)=\int_{SG}p_{t-1,k}\qty(x)\,\dd{\mu}\qty(x), \, \, \text{when}\, \, k=2 \\
   \partial_n f_{t,k}\qty(q_0)+2\partial_n f_{t,k}\qty(q_1)= 0, \, \, \text{when}\,\,  k=1 \\
   \partial_n f_{t,k}\qty(q_1)+\int_{SG}p_{t-1, k}\qty(x)P_{0,2}\qty(x)\,\dd{\mu}\qty(x) = 0, \, \, \text{when}\,\,  k=1
\end{cases}    
\end{equation}

\end{lemma}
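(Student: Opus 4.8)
The plan is to realize $f_{t,k}$ explicitly through the Green operator and then read off its normal derivatives via the Gauss--Green formula. By definition $f_{t,k}=\GG(p_{t-1,k})=-\int_{SG}G(x,y)p_{t-1,k}(y)\,\dd{\mu}(y)$, so the characterization of $G$ recalled above gives at once that
\[
\Delta_\mu f_{t,k}=p_{t-1,k},\qquad f_{t,k}\big|_{V_0}=0 .
\]
In particular $f_{t,k}$ is a polynomial of degree $t$, and since $\GG$ commutes with the reflection fixing $q_0$ and interchanging $q_1,q_2$ (the Green's function and $\mu$ are invariant under it), $f_{t,k}$ inherits the parity of $p_{t-1,k}$: symmetric for $k=1,2$ and anti-symmetric for $k=3$.

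Next I would feed this into the Gauss--Green formula $\int_{SG}(u\,\Delta_\mu v-v\,\Delta_\mu u)\,\dd{\mu}=\sum_{q\in V_0}\qty(u(q)\partial_n v(q)-v(q)\partial_n u(q))$ (see \cite{S}), taken with $u=f_{t,k}$ and $v=h_i$, where $h_i$ is the harmonic function with $h_i(q_j)=\delta_{ij}$. Because $\Delta_\mu h_i=0$ and $f_{t,k}$ vanishes on $V_0$, every boundary term drops except one, yielding the clean representation
\[
\partial_n f_{t,k}(q_i)=\int_{SG}h_i\,p_{t-1,k}\,\dd{\mu},\qquad i=0,1,2 .
\]
The whole problem is thereby reduced to evaluating these three integrals.

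To evaluate them I would expand each $h_i$ in the degree-zero monomials. Using $P_{0,1}\equiv 1$ together with $\beta_0=P_{0,2}(q_1)=-\tfrac12$, $\gamma_0=P_{0,3}(q_1)=\tfrac12$ from Lemma~\ref{lem:recabg} and the (anti-)symmetry of $P_{0,2},P_{0,3}$, one solves a $3\times3$ linear system to get $h_0=P_{0,1}+2P_{0,2}$, $h_1=-P_{0,2}+P_{0,3}$ and $h_2=-P_{0,2}-P_{0,3}$. Substituting reduces every normal derivative to combinations of $\int p_{t-1,k}\,\dd{\mu}$, $\int P_{0,2}\,p_{t-1,k}\,\dd{\mu}$ and $\int P_{0,3}\,p_{t-1,k}\,\dd{\mu}$. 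Two orthogonality mechanisms then finish the computation: since $p_{0,k}=P_{0,k}$ and $t-1\ge1$, the within-family $L^2$-orthogonality of the Legendre polynomials (Remark~\ref{rem:defLegendreop}) forces $\int P_{0,k}\,p_{t-1,k}\,\dd{\mu}=0$ (in particular $\int p_{t-1,1}\,\dd{\mu}=\ip{p_{t-1,1}}{p_{0,1}}_2=0$ and $\int P_{0,2}\,p_{t-1,2}\,\dd{\mu}=0$); and integrating a symmetric function against an anti-symmetric one against the symmetric measure $\mu$ gives $0$, which annihilates the cross-parity integrals. Tracking which integrals survive in each parity class then produces the four stated identities.

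The one place that needs care --- and the main obstacle --- is the $k=1$ family, where the cross-family integral $I:=\int P_{0,2}\,p_{t-1,1}\,\dd{\mu}$ does \emph{not} vanish, since both factors are symmetric. Here one must keep $I$ symbolic and observe that $\partial_n f_{t,1}(q_0)=2I$ while $\partial_n f_{t,1}(q_1)=-I$; the content of the two $k=1$ claims is exactly that these are governed by the single quantity $I$, so that $\partial_n f_{t,1}(q_0)+2\partial_n f_{t,1}(q_1)=0$ and $\partial_n f_{t,1}(q_1)+I=0$. The remaining bookkeeping (the vanishing at $q_1,q_2$ for $k=2,3$, which needs both mechanisms at once, and the $q_0$ value for $k=2$) is then immediate from the vanishing integrals.
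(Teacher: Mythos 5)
Your argument is correct and is essentially the paper's own proof: both apply the Gauss--Green formula to $f_{t,k}$ against degree-zero harmonic test functions and then finish with the $L^2$-orthogonality of $p_{t-1,k}$ to $p_{0,k}$ (valid since $t-1\ge 1$) together with parity under the reflection fixing $q_0$. The only difference is cosmetic: you test against the boundary ``tent'' harmonics $h_i$ and expand $h_0=P_{0,1}+2P_{0,2}$, $h_1=-P_{0,2}+P_{0,3}$, $h_2=-P_{0,2}-P_{0,3}$ to get explicit formulas for all three normal derivatives, whereas the paper tests directly against $p_{0,k}$, the constant function $1$, and $p_{0,2}$ --- the same span of harmonic functions, so the two computations coincide.
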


\begin{proof}
We will use  the Gauss-Green formula given by $$\int _{SG}f\lap g-\int_{SG}g\lap f
\dd{\mu}=\sum\limits_{l=0}^{2}f\qty(q_l)\partial_n g\qty(q_l)-g\qty(q_l)\partial_n f\qty(q_l).$$ 

Take $f=f_{t,k}$ and set
$g=p_{0,k}$. For this choice of $g$, the left side of the Gauss-Green formula always vanishes because $\Delta g=0$ and $\inner{g}{\Delta f_t}_{L^2} = \inner{p_{0,k}}{p_{t-1, k}}_{L^2} = 0$. Furthermore, the first term on the right hand side vanishes because $f_t$ vanishes on the boundary. As for the second term on the right, when $k=2$ or $3$, $g\qty(q_0)\partial_n f\qty(q_0) = 0$ because for these values of $k$, $g$ vanishes at $q_0$. 

When $k = 2$ we have due to symmetry that $\partial_n f\qty(q_1) = \partial_n f\qty(q_2)$ and $g\qty(q_1) = g\qty(q_2) = -1/2$; as a result the right side of the Gauss-Green formula reads $\frac{-1}{2}\qty(\partial_n f\qty(q_2) + \partial_n f\qty(q_1)) = 0$, implying the first equation in~\eqref{eq: dnftfor2or3atq1} for $k=2$. 
The case  $k=3$, is treated similarly using the fact that 
 $g$ and $f$ are both anti-symmetric.
 
 The second equation in~\eqref{eq: dnftfor2or3atq1} is obtained by setting $g:= p_{0,1}$ in the Gauss-Green formula.

On the other hand, when $k=1$, let $g \equiv 1$ and $f=f_t$. It follows that the left hand side of the Gauss-Green formula vanish, while  its right hand side reduces to $\partial_n f_t\qty(q_0)+2\partial_n f_t\qty(q_1)=0$, which is precisely the third equation in ~\eqref{eq: dnftfor2or3atq1}. Finally, if we set $g:= p_{0,2}$  then we get the last equation in~\eqref{eq: dnftfor2or3atq1}.
\end{proof}

The following lemma is motivated by \cite[Theorem 3.5]{OST} and allows us to recursively compute the polynomial $f_n$ defined in Lemma~\ref{Lemma: Expansions for f}. We recall that the sequences  $\alpha_i, \beta_i,$ and $\gamma_i$ were defined in  Lemma~\ref{eq: abg}.

\begin{lemma}\label{Lemma: Expansions for zeta}
Suppose $k= 1,2$ or $3$. For any $j\geq 0$, 
$$p_j:=p_{j,k}=\sum\limits_{l=0}^j\omega_{j,l}P_{l,k},\,\, \zeta_{j, 1} = 2\sum\limits_{l=0}^j\omega_{j,l}\alpha_{l+1}, \,\, \zeta_{j, 2} =2\sum\limits_{l=0}^j\omega_{j,l}\beta_{l+1}, \,\, \text{and}\, \,
\zeta_{j, 3} = -2\sum\limits_{l=0}^j\omega_{j,l}\gamma_{l+1}.$$
Then
\begin{equation} \label{ftfor12}
\begin{cases}
 f_{j+1,k}=\zeta_{j, k}P_{0,2}+\sum\limits_{l=0}^j \omega_{j,l}P_{l+1,k},\, \, \text{when}\, \, k=1\, \, \text{or}\, \, 2,\\
  f_{j+1,3}=\zeta_{j, 3}P_{0,3}+\sum\limits_{l=0}^j \omega_{j,l}P_{l+1,3},\, \, \text{when}\, \, k=3.
\end{cases}
\end{equation}
\end{lemma}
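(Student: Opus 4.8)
The plan is to pin down $f_{j+1,k}$ by the two properties that determine it uniquely and then to check that the proposed closed form has both of them. By \eqref{eq: Green's operator explicit formula}, $f_{j+1,k}=-\int_{SG}G(x,y)p_{j,k}(y)\,\dd{\mu}(y)=\GG(p_{j,k})$. The Green's function identity recorded just before \eqref{eq: Green's operator explicit formula} says that $v(x)=\int_{SG}G(x,y)g(y)\,\dd{\mu}(y)$ is the unique solution of $-\Delta_\mu v=g$, $v|_{V_0}=0$; hence $\GG(g)=-v$ is the unique function with $\Delta_\mu\GG(g)=g$ and $\GG(g)|_{V_0}=0$ (uniqueness because the difference of two solutions is harmonic and vanishes on $V_0$, so it is identically $0$). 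Writing $h_{j+1,k}$ for the right-hand side of \eqref{ftfor12}, it therefore suffices to verify the two identities $\Delta_\mu h_{j+1,k}=p_{j,k}$ and $h_{j+1,k}|_{V_0}=0$.

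The first is immediate. Using the monomial relation $\Delta_\mu P_{l+1,k}=P_{l,k}$ (valid for $l\geq0$) together with the fact that the degree-zero monomials $P_{0,2}$ and $P_{0,3}$ are harmonic, applying $\Delta_\mu$ termwise annihilates the correction term and shifts every index down by one, so $\Delta_\mu h_{j+1,k}=\sum_{l=0}^j\omega_{j,l}P_{l,k}=p_{j,k}$.

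The boundary identity is the crux, and it is exactly here that the three definitions of $\zeta_{j,k}$ are used. At $q_0$ everything vanishes because $P_{m,k}(q_0)=\delta_{m,0}\delta_{k,1}$ forces $P_{l+1,k}(q_0)=0$ as well as $P_{0,2}(q_0)=P_{0,3}(q_0)=0$. At $q_1$ I substitute $P_{l+1,1}(q_1)=\alpha_{l+1}$, $P_{l+1,2}(q_1)=\beta_{l+1}$, $P_{l+1,3}(q_1)=\gamma_{l+1}$ along with $P_{0,2}(q_1)=\beta_0=-\tfrac12$ and $P_{0,3}(q_1)=\gamma_0=3\alpha_1=\tfrac12$; for instance when $k=1$ one gets $h_{j+1,1}(q_1)=-\tfrac12\zeta_{j,1}+\sum_{l=0}^j\omega_{j,l}\alpha_{l+1}$, which equals $0$ precisely by the definition $\zeta_{j,1}=2\sum_{l=0}^j\omega_{j,l}\alpha_{l+1}$, and the cases $k=2,3$ are word-for-word the same after replacing $\alpha$ by $\beta,\gamma$ and using the matching $\zeta_{j,k}$. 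The value at $q_2$ needs no separate computation: the $k=1,2$ families (and $P_{0,2}$) are symmetric across the axis through $q_0$, so $h_{j+1,k}(q_2)=h_{j+1,k}(q_1)=0$, while the $k=3$ family (and $P_{0,3}$) is anti-symmetric, so $h_{j+1,3}(q_2)=-h_{j+1,3}(q_1)=0$. Uniqueness then yields $f_{j+1,k}=h_{j+1,k}$.

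The only place demanding care is this last paragraph: one must use the symmetric corrector $P_{0,2}$ for $k=1,2$ but the anti-symmetric corrector $P_{0,3}$ for $k=3$, since the particular solution $\sum_{l=0}^j\omega_{j,l}P_{l+1,k}$ inherits the (anti)symmetry of $p_{j,k}$, and a corrector of the wrong symmetry could never cancel its values at \emph{both} $q_1$ and $q_2$; the three formulas for $\zeta_{j,k}$ are then forced as the unique coefficients killing the boundary value at $q_1$. Everything else is a single application of $\Delta_\mu P_{l+1,k}=P_{l,k}$ and the uniqueness of the Green's-operator solution.
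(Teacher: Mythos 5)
Your proof is correct and rests on the same underlying fact as the paper's: the Green operator sends $P_{l,k}$ to $P_{l+1,k}$ plus a harmonic correction of the appropriate symmetry, whose coefficient is fixed by the vanishing at $q_1$ (and at $q_2$ by symmetry). The only difference is one of packaging — the paper expands $p_{j,k}$ in monomials and quotes the per-monomial identity $-\int G(\cdot,y)P_{l,k}(y)\,\dd{\mu}(y)=P_{l+1,k}+2\alpha_{l+1}P_{0,2}$ (and its $k=2,3$ analogues) without proof, whereas you verify the aggregate formula from scratch via $\Delta P_{l+1,k}=P_{l,k}$, the boundary values $\alpha,\beta,\gamma$, and uniqueness of the Dirichlet problem, which makes your version more self-contained.
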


\begin{proof} Fix $k=1,2,$ or $3$. 
Note that $$
f_{j+1}\qty(x):=f_{j+1, k}\qty(x)=-\int \;G\qty(x,y)p_j\qty(y)\dd{\mu}\qty(y)=-\sum\limits^j_{l=0}\omega_{j,l}\int\; G\qty(x,y)P_{l,k}\qty(y)\dd{\mu}\qty(y). $$ 
However,
\begin{align}
    -\int\; G\qty(x,y)P_{l,k}\qty(y)\dd{\mu}\qty(y)= \begin{cases}P_{l+1,k}+2\alpha_{l+1}P_{0,2},&k=1,\\
P_{l+1,k}+2\beta_{l+1}P_{0,2},&k=2,\\
P_{l+1,k}-2\gamma_{l+1}P_{0,3},&k=3.
\end{cases}
\end{align}
\end{proof}

\begin{remark}
When $k=1$ in~\eqref{ftfor12}, we see that $f_{j+1, 1}$ does not belong to the same family of polynomials generated from the set of monomials with $k=1$. This fact will play a subtle  role in the numerical experiments we describe later. 
\end{remark}

\subsection{Sobolev Orthogonal Polynomials with respect to \texorpdfstring{$k=2,3$}{k=2,3}}\label{sec3.2}

In this section we investigate the  Sobolev-Legendre OPs corresponding to the families $k=2$ and $3$, as they behave similarly.

\begin{lemma}\label{Lemma: Family 2 and 3 is self contained on Green's, Family 1 isn't}
Fix $k=2$ or $3$. Let $C$ be a polynomial in $\operatorname{span}\qty{P_{n,k}}_{n=0}^{\infty}$ with $\operatorname{deg}\qty(C) = J$. Define the function $f$ on $SG$ by $f\qty(x):= -\int_{SG} G\qty(x,y)C\qty(y)dy$. Then, $f$ is also in $\operatorname{span}\qty{P_{n,k}}_{n=0}^{\infty}$ with $\operatorname{deg}\qty(f) = J+1$. In other words $f$ remains in the span of the same family of monomials as $C$. 
\end{lemma}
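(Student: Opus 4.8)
The plan is to reduce the statement to the action of the Green operator $\mathcal{G}$ on a single monomial, invoke the explicit formula already obtained inside the proof of Lemma~\ref{Lemma: Expansions for zeta}, and then read off both the family membership and the degree directly from that formula. The whole content of the lemma is the observation that, for $k=2$ or $3$, the correction term produced by $\mathcal{G}$ is itself a member of the \emph{same} monomial family.

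Concretely, I would first expand $C$ in the monomial basis as $C=\sum_{l=0}^{J}c_{l}P_{l,k}$ with $c_{J}\neq 0$, which is possible since $C\in\operatorname{span}\{P_{n,k}\}_{n=0}^{\infty}$ and $\operatorname{deg}(C)=J$. By linearity of the integral defining $f$ we get $f=\sum_{l=0}^{J}c_{l}\,\mathcal{G}(P_{l,k})$, where $\mathcal{G}(P_{l,k})(x)=-\int_{SG}G(x,y)P_{l,k}(y)\,d\mu(y)$.

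Next I would substitute the single-monomial identity from the proof of Lemma~\ref{Lemma: Expansions for zeta}, namely $\mathcal{G}(P_{l,2})=P_{l+1,2}+2\beta_{l+1}P_{0,2}$ for $k=2$ and $\mathcal{G}(P_{l,3})=P_{l+1,3}-2\gamma_{l+1}P_{0,3}$ for $k=3$. (A self-contained check is available from $\Delta P_{l+1,k}=P_{l,k}$ and $\Delta\mathcal{G}(P_{l,k})=P_{l,k}$ with $\mathcal{G}(P_{l,k})|_{V_0}=0$: the difference $\mathcal{G}(P_{l,k})-P_{l+1,k}$ is harmonic, hence a combination of $P_{0,1},P_{0,2},P_{0,3}$, and matching boundary values using the symmetry of the $k=2,3$ families forces this correction to be a multiple of $P_{0,2}$, resp.\ $P_{0,3}$.) The decisive point is that for both $k=2$ and $k=3$ the correction is a scalar multiple of $P_{0,k}$, which already lies in $\operatorname{span}\{P_{n,k}\}_{n=0}^{\infty}$. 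Hence each summand $\mathcal{G}(P_{l,k})$, and therefore $f$ itself, lies in $\operatorname{span}\{P_{n,k}\}_{n=0}^{\infty}$.

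Finally I would read off the degree. For $k=2$ the identity gives $f=\sum_{l=0}^{J}c_{l}P_{l+1,2}+\big(2\sum_{l=0}^{J}c_{l}\beta_{l+1}\big)P_{0,2}$, and analogously for $k=3$. The unique summand of degree $J+1$ is $c_{J}P_{J+1,k}$; all the remaining monomials $P_{l+1,k}$ with $l<J$ and the $P_{0,k}$ correction have degree at most $J$. Since monomials of distinct degrees are linearly independent and $c_{J}\neq 0$, no cancellation can occur at the top degree, so $\operatorname{deg}(f)=J+1$. I do not expect a genuine obstacle: the argument is bookkeeping on top of the explicit Green's-operator formula. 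The only point needing care is the family-preservation claim itself, which is special to $k=2,3$; in contrast, $\mathcal{G}(P_{l,1})=P_{l+1,1}+2\alpha_{l+1}P_{0,2}$ has a correction in the $k=2$ family, so the analogous statement fails for $k=1$ (cf.\ the Remark following Lemma~\ref{Lemma: Expansions for zeta}).
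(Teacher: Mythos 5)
Your proposal is correct and follows essentially the same route as the paper: apply the Green operator termwise to the monomial expansion of $C$, observe that $\mathcal{G}(P_{l,k})-P_{l+1,k}$ is harmonic and hence a combination of $P_{0,1},P_{0,2},P_{0,3}$, and use the vanishing at $q_0$ together with the symmetry (for $k=2$) or antisymmetry (for $k=3$) to kill the unwanted harmonic components. The paper re-derives the harmonic correction in place rather than quoting the explicit formula from Lemma~\ref{Lemma: Expansions for zeta}, but your parenthetical self-contained check is exactly that argument, so the two proofs coincide in substance.
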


\begin{proof}
Observe that by distributing the Green's operator in the polynomial we can write $f$ as
$$f\qty(x)= \sum\limits_{j=0}^{J}c_jP_{j+1, k}\qty(x) - \sum\limits_{j=0}^{J} c_j H_j\qty(x),$$ 
where $H_{j}$ is a harmonic function with the same boundary values as $P_{j+1, k}$. As such we can write $H_j\qty(x)=\sum\limits_{l=1}^3d^{\qty(j)}_lP_{0,l}\qty(x)$.
Because $f$ vanishes at $q_0$, we conclude that the coefficient of $P_{0,1}$ in the above formula is $0$. 

Next, when $k=2$, $C$ and $f$ are both symmetric, hence the coefficient of $P_{0,3}$ is zero. Similarly, when $k=3$,  $C$ and $f$ are both  antisymmetric, hence the coefficient of $P_{0,2}$ is zero. 
\end{proof}

For $k=2,$ or $3$ fixed, and given the Legendre OPs $\qty{p_{n,k}}$, the polynomial $f_{n+1}\qty(x)=-\int_{SG}G\qty(x,y)p_n\qty(y)\, \dd{\mu}\qty(y)$ is of degree $n+1$. When expressed in terms of the Legendre OPs as in~\cite[Theorem 3.2]{OST}, one obtains a substitute of the three-term recursion formula. Instead, if we express $f_{n+1}$ in terms of the Sobolev-Legendre OPs, we obtain the following result. This recursion will be used when we plot the Sobolev-Legendre OPs in Section~\ref{subsec: numericsSLOPs}.

\begin{theorem}\label{th:k23recurrence} Fix $k=2$ or $3$, and let $\qty{s_n}_{n\geq-1}$ be the family of Sobolev OP, where  $s_{-1}:=0$. Let $f_0\qty(x):=f_{0,k}\qty(x)=0$ and for $n\geq 0$ let  $$f_{n+1}\qty(x):=f_{n+1,k}\qty(x)= -\int_{SG}G\qty(x,y)p_{n}\qty(y)\, \dd{\mu}\qty(y),$$ where $\qty{p_n}_{n=0}^\infty$ is the corresponding set of monic Legendre OPs.  Then the  Sobolev OP satisfy the following recurrence relation:
\begin{equation}
    s_{n+1} + a_{n}s_{n}+\tilde{b}_{n}s_{n-1} = f_{n+1}\quad n\geq 0,\label{3-term recurrence}
\end{equation}
where $$ a_{n} = \frac{\inner{f_{n+1}}{s_{n}}_S}{\|s_n\|_S^2},\qquad \tilde{b}_{n} = \frac{\inner{f_{n+1}}{s_{n-1}}_S}{\|s_{n-1}\|_S^2}.$$
\end{theorem}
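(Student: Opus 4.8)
The plan is to establish the recurrence by showing that the difference $f_{n+1} - s_{n+1}$ lies in the span of the lower-degree Sobolev polynomials $\{s_0, \dots, s_n\}$, and then to extract the claimed coefficients using the $S$-orthogonality of the $s_\ell$. First I would observe that since $k=2$ or $3$, Lemma~\ref{Lemma: Family 2 and 3 is self contained on Green's, Family 1 isn't} guarantees that $f_{n+1}(x) = -\int_{SG} G(x,y) p_n(y)\,\dd{\mu}(y)$ is a polynomial of degree exactly $n+1$ lying in $\operatorname{span}\{P_{m,k}\}_{m=0}^\infty$. Since $\{s_0, \dots, s_{n+1}\}$ is obtained by Gram--Schmidt from $\{P_{0,k}, \dots, P_{n+1,k}\}$, it forms a basis (indeed an $S$-orthogonal one) for the space of polynomials of degree $\le n+1$ in this family. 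Hence I can write the unique expansion

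\begin{equation}
f_{n+1} = \sum_{\ell=0}^{n+1} c_{n,\ell}\, s_\ell, \qquad c_{n,\ell} = \frac{\inner{f_{n+1}}{s_\ell}_S}{\|s_\ell\|_S^2}.
\end{equation}

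The coefficient $c_{n,n+1}$ of the top term equals $1$: both $f_{n+1}$ and $s_{n+1}$ are monic of degree $n+1$ (here I would note that $f_{n+1}$ inherits a monic leading term from $p_n$ since applying $-\mathcal{G}$ raises the monomial index by one with unit coefficient, as recorded in Lemma~\ref{Lemma: Expansions for zeta}), so their difference has degree $\le n$.

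The crux of the argument, and the step I expect to be the main obstacle, is proving that $c_{n,\ell} = 0$ for all $\ell \le n-2$, so that only the three terms $\ell = n+1, n, n-1$ survive. For this I would show $\inner{f_{n+1}}{s_\ell}_S = 0$ whenever $\ell \le n-2$. The natural route is to unfold the Sobolev inner product as $\inner{f_{n+1}}{s_\ell}_S = \inner{f_{n+1}}{s_\ell}_{L^2} + \chi \inner{\Delta f_{n+1}}{\Delta s_\ell}_{L^2}$, and to exploit that $f_{n+1} = -\mathcal{G}(p_n)$ so that $\Delta f_{n+1} = p_n$. The second term then becomes $\chi\inner{p_n}{\Delta s_\ell}_{L^2}$, and since $\Delta s_\ell$ is a polynomial of degree $\ell - 1 \le n-3 < n$, the $L^2$-orthogonality of the Legendre OP $p_n$ against all lower-degree polynomials of the same family forces this term to vanish. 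Handling the first term $\inner{f_{n+1}}{s_\ell}_{L^2}$ requires a self-adjointness argument for the Green operator: using that $\mathcal{G}$ is $L^2$-self-adjoint (with vanishing boundary contributions, which is where the Gauss--Green identity and Lemma~\ref{Lemma: Expansions for f} enter to control the boundary terms), I would move $\mathcal{G}$ onto $s_\ell$ to get $\inner{p_n}{-\mathcal{G}(s_\ell)}_{L^2}$ up to boundary terms, and again invoke $\inner{p_n}{q}_{L^2} = 0$ for $\deg q \le n-1$, noting $\deg(-\mathcal{G}(s_\ell)) = \ell + 1 \le n - 1$.

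Once the vanishing is established, the recurrence reads $f_{n+1} = s_{n+1} + c_{n,n} s_n + c_{n,n-1} s_{n-1}$, and matching with~\eqref{3-term recurrence} identifies $a_n = c_{n,n}$ and $\tilde{b}_n = c_{n,n-1}$, which are exactly the stated expressions. Finally I would verify the boundary cases: for $n=0$ the convention $s_{-1} = 0$ makes the $\tilde b_0$ term drop out, and for $n=1$ one checks directly that the expansion has no $s_{-1}$ contribution, so no special treatment is needed beyond the stated conventions. The main delicacy throughout is the careful bookkeeping of the boundary terms in the self-adjointness of $\mathcal{G}$; this is precisely what Lemma~\ref{Lemma: Expansions for f} was set up to supply for the families $k=2,3$, where the normal derivatives of the $f_t$ vanish at $q_1, q_2$ and the relevant boundary pairings collapse.
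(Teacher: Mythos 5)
Your proposal is correct and follows essentially the same route as the paper: expand $f_{n+1}$ in the $S$-orthogonal basis $\{s_\ell\}$, kill the coefficients for $\ell\le n-2$ by moving the Green operator across the $L^2$ pairing and using that $p_n$ is $L^2$-orthogonal to lower-degree polynomials of the same family (with Lemma~\ref{Lemma: Family 2 and 3 is self contained on Green's, Family 1 isn't} guaranteeing the family is preserved), and read off $a_n,\tilde b_n$ by projection. One small clarification: the self-adjointness of $\mathcal{G}$ here is just Fubini plus the symmetry $G(x,y)=G(y,x)$ of the Dirichlet Green's function, so no boundary-term bookkeeping (and no appeal to Lemma~\ref{Lemma: Expansions for f}) is actually needed at that step.
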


\begin{proof} Fix $n\geq 0$. Because $f_{n+1}$ is an $n+1$ degree  polynomial, we have
 $f_{n+1} = \sum\limits_{j=0}^{n+1}a_{j}s_{j},$
where $a_j = \tfrac{\langle f_{n+1}, s_j \rangle_{S}}{\|s_j\|_S^2}$. Note that if  $j < n-1$, 
\begin{align}
    \langle f_{n+1}, p_j \rangle_{S} &= \int_{SG}f_{n+1}p_{j}\,\dd{\mu} + \chi\int_{SG}\lap f_{n+1} \lap p_j \dd{\mu}\\
    &= -\int_{SG}\int_{SG}G\qty(x,y)p_{n}\qty(y)\,\dd{\mu}\qty(y)p_{j}\qty(x)\,\dd{\mu}\qty(x) + \chi\int_{SG}p_{n}\, \lap p_{j}\,\dd{\mu}\\
    &= -\int_{SG}p_{n}\qty(y)\int_{SG}G\qty(x,y)p_{j}\qty(x)\,\dd{\mu}\qty(x)\,\dd{\mu}\qty(y) + \chi\int_{SG}p_{n}\lap p_{j}\,\dd{\mu} \\
    &= \int_{SG}p_{n}\qty(y)f_{j+1}\qty(y) \,\dd{\mu}\qty(y) + \chi\int_{SG}p_{n}\lap p_{j}\,\dd{\mu} \\
    &= 0, 
\end{align}
where we have used the fact  $p_{n}$ is orthogonal to lower order polynomials in the standard inner product. If follows that, for $j<n-1$, $\ip{f_{n+1}}{s_j}_S=0$ as $s_j$ can be written as linear combination of $p_\ell$ where $\ell\leq j< n-1.$ Furthermore, the coefficient $a_{n+1} = 1$ since from Lemma \ref{Lemma: Expansions for f}, $f_{n+1}$ is monic of degree $n+1$. The rest of the coefficients are recovered by projection. 
\end{proof}   
 
 Using the notations of Remark~\ref{rem:defLegendreop} and~\cite[Theorem 3.2]{OST} the following corollary is easily established.
 
 \begin{corollary}\label{corfnsnbn}
 Fix $k=2$ or $3$ and set $s_{-1}\qty(x)=p_{-1}\qty(x)=0$. For $n\geq 0$ set  $f_{n+1}\qty(x)=-\int_{SG}G\qty(x, y)p_{n}\qty(y)\, \dd{\mu}\qty(y)$. The Legendre OPs $\qty{p_{n,k}}_{n=0}^{\infty}$ and the Sobolev-Legendre OPs $\qty{s_{n,k}}_{n=0}^\infty$ satisfy the following relation.
 $$f_{n+1}(x)=s_{n+1}+a_ns_n+\tilde{b}_{n}s_{n-1}=p_{n+1}+b_np_n+c_np_{n-1},$$ where the coefficients $b_n, c_n$ are defined in ~\cite[Theorem 3.2]{OST}.
 \end{corollary}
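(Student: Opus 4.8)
The plan is to establish the identity in Corollary~\ref{corfnsnbn} by recognizing that both expressions on the right-hand side are simply two different spectral decompositions of the \emph{same} polynomial $f_{n+1}$: one in the Sobolev-orthogonal basis $\{s_n\}$ and one in the Legendre-orthogonal basis $\{p_n\}$. The middle equality, $f_{n+1} = s_{n+1} + a_n s_n + \tilde{b}_n s_{n-1}$, is exactly the content of Theorem~\ref{th:k23recurrence}, so it may be invoked directly. Thus the entire task reduces to proving the rightmost equality, $f_{n+1} = p_{n+1} + b_n p_n + c_n p_{n-1}$, which is precisely the three-term recursion for the Legendre OPs established in \cite[Theorem 3.2]{OST}. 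Since we are permitted to assume results stated or cited earlier, the corollary is essentially immediate once the two recursions are placed side by side.

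The key steps, in order, are as follows. First I would note that Lemma~\ref{Lemma: Family 2 and 3 is self contained on Green's, Family 1 isn't} guarantees that for $k=2,3$ the function $f_{n+1}(x) = -\int_{SG} G(x,y) p_n(y)\,\dd{\mu}(y)$ remains in $\operatorname{span}\{P_{m,k}\}$ and has degree exactly $n+1$; this ensures $f_{n+1}$ is a genuine degree-$(n+1)$ polynomial in the same monomial family, so that expanding it in either the $\{s_m\}$ or the $\{p_m\}$ basis is legitimate. Second, I would invoke Theorem~\ref{th:k23recurrence} to obtain the Sobolev-basis expansion $f_{n+1} = s_{n+1} + a_n s_n + \tilde{b}_n s_{n-1}$, noting that the projection argument there already shows all coefficients $\ip{f_{n+1}}{s_j}_S$ vanish for $j < n-1$. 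Third, I would quote \cite[Theorem 3.2]{OST} for the Legendre-basis expansion $f_{n+1} = p_{n+1} + b_n p_n + c_n p_{n-1}$, with $b_n, c_n$ as defined there. Chaining these two equalities through the common quantity $f_{n+1}$ yields the stated identity.

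The main subtlety — rather than a genuine obstacle — is bookkeeping about which notion of orthogonality kills which coefficients. For the Sobolev side the vanishing of low-order coefficients uses the Sobolev inner product $\ip{\cdot}{\cdot}_S$ and the computation in Theorem~\ref{th:k23recurrence}, whereas for the Legendre side the analogous truncation to a three-term relation uses the plain $L^2$ inner product and the fact that $p_n$ is $L^2$-orthogonal to all lower-degree polynomials; these are \emph{different} mechanisms producing superficially identical-looking three-term recursions. I would be careful to emphasize that the coefficients $(a_n,\tilde{b}_n)$ and $(b_n,c_n)$ are in general distinct — the identity asserts only that the two linear combinations represent the same polynomial, not that the coefficients agree term-by-term. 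Once this is made explicit, the corollary follows with essentially no further computation, the whole argument being a matter of aligning Theorem~\ref{th:k23recurrence} with \cite[Theorem 3.2]{OST}.
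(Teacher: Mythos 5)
Your proposal is correct and matches the paper's intended argument: the paper itself presents this corollary as "easily established" by juxtaposing Theorem~\ref{th:k23recurrence} (the Sobolev-basis expansion of $f_{n+1}$) with the Legendre three-term relation of \cite[Theorem 3.2]{OST}, exactly as you do. Your additional remarks on the degree of $f_{n+1}$ via Lemma~\ref{Lemma: Family 2 and 3 is self contained on Green's, Family 1 isn't} and on the distinct orthogonality mechanisms are sound and consistent with the paper.
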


We next prove that as in the classical case, the Sobolev-Legendre OPs satisfy a second order differential equation involving the Legendre OPs. This result should be compared to \cite[Theorem 3.1]{Meij}. Note that we use the notation of \cite{OST}.

\begin{theorem}\label{th: sobolev ode}
Fix $k=2$ or $3$. Then for each $n\geq 0$, the Sobolev OPs satisfy the following second order differential equation: 
\begin{align}
s_n\qty(x)+\chi \Delta^2 s_n\qty(x)&=\Delta p_{n+1}\qty(x)+a_nd_n^2\xi_n^{-2}\Delta p_n\qty(x)+d_{n-1}^2\xi_{n}^{-2}\Delta p_{n-1}\qty(x) \label{eq: Sobolev ODE} \\
&=p_n\qty(x)+\qty(a_nd_n^2\xi_n^{-2}-b_n)\lap p_n +\qty(d_{n-1}^2\xi_n^{-2}-c_n)\lap p_{n-1},\notag
\end{align}
 where $\qty{p_n}_{n\geq 0}$ are the corresponding Legendre OPs with  $p_{-1}=0,$ $a_n$ is given as in Theorem~\ref{th:k23recurrence}, $d_n^{-2}=\|p_n\|_2^2,$ and  $\xi_{n}^{-2}=\|s_n\|_S^2.$
\end{theorem}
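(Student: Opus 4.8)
The second displayed equality is essentially free, so I would dispose of it first. Since $f_{n+1}=\mathcal{G}(p_n)$ and the Green operator satisfies $\Delta\mathcal{G}=\mathrm{id}$ with $\mathcal{G}(\cdot)|_{V_0}=0$, we have $\Delta f_{n+1}=p_n$; applying $\Delta$ to the Green-recurrence $f_{n+1}=p_{n+1}+b_np_n+c_np_{n-1}$ of Corollary~\ref{corfnsnbn} gives $\Delta p_{n+1}=p_n-b_n\Delta p_n-c_n\Delta p_{n-1}$, and substituting this into the first line and collecting terms yields the second. So the whole content is the first equality. I would set $\phi_n:=s_n+\chi\Delta^2 s_n$ and $\psi_n:=\Delta p_{n+1}+a_nd_n^2\xi_n^{-2}\Delta p_n+d_{n-1}^2\xi_n^{-2}\Delta p_{n-1}$; since $\Delta P_{j,k}=P_{j-1,k}$, both are polynomials of degree at most $n$ in the $k$-th family, and the task is to prove $\phi_n=\psi_n$.

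The plan is to pair both $\phi_n$ and $\psi_n$ against the family $\{f_{m+1}\}_{m\ge 1}$ in the $L^2$ inner product, the point being that for $k\in\{2,3\}$ all boundary terms drop out. This is exactly where the hypothesis enters: by Lemma~\ref{Lemma: Expansions for f}, $\partial_n f_{m+1}(q_1)=\partial_n f_{m+1}(q_2)=0$, while $f_{m+1}$ vanishes on all of $V_0$ and every $k=2,3$ polynomial (and its Laplacian) vanishes at $q_0$, so in particular $\Delta s_n(q_0)=0$. With these facts, one application of the Gauss-Green formula (taking $v=\Delta s_n$) shows $\int_{SG}\Delta^2 s_n\, f_{m+1}\,\dd{\mu}=\int_{SG}\Delta s_n\,\Delta f_{m+1}\,\dd{\mu}=\int_{SG}\Delta s_n\,p_m\,\dd{\mu}$ with no surviving boundary sum, whence $\langle\phi_n,f_{m+1}\rangle_2=\int_{SG}s_nf_{m+1}\,\dd{\mu}+\chi\int_{SG}\Delta s_n\,p_m\,\dd{\mu}=\langle s_n,f_{m+1}\rangle_S$ for every $m\ge 1$. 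The same boundary-free Gauss-Green computation (now with $v=p_j$) gives $\langle\Delta p_j,f_{m+1}\rangle_2=\int_{SG}p_j\,\Delta f_{m+1}\,\dd{\mu}=\langle p_j,p_m\rangle_2=d_m^{-2}\delta_{jm}$.

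Next I would evaluate both pairings and check they agree. Inserting the Sobolev recurrence $f_{m+1}=s_{m+1}+a_ms_m+\tilde b_ms_{m-1}$ of Theorem~\ref{th:k23recurrence} and using $\langle s_n,s_j\rangle_S=\xi_n^{-2}\delta_{nj}$, the quantity $\langle s_n,f_{m+1}\rangle_S$ is nonzero only for $m\in\{n-1,n,n+1\}$, with values $\xi_n^{-2}$, $a_n\xi_n^{-2}$, $\tilde b_{n+1}\xi_n^{-2}$. On the other side, $\langle\psi_n,f_{m+1}\rangle_2$ selects the coefficient of $\Delta p_m$ in $\psi_n$ times $d_m^{-2}$, giving $\xi_n^{-2}$ at $m=n-1$ and $a_n\xi_n^{-2}$ at $m=n$; the first two cases match on the nose. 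For $m=n+1$ I would verify $\tilde b_{n+1}\xi_n^{-2}=d_{n+1}^{-2}$ by computing $\tilde b_{n+1}\xi_n^{-2}=\langle s_n,f_{n+2}\rangle_S=c_{n+1}d_n^{-2}$ (only the $p_n$-component of $f_{n+2}=p_{n+2}+b_{n+1}p_{n+1}+c_{n+1}p_n$ contributes, as $\Delta s_n\perp p_{n+1}$ in $L^2$), together with the norm identity $c_{n+1}=\|p_{n+1}\|_2^2/\|p_n\|_2^2$, which follows from the self-adjointness of $\mathcal{G}$ applied to $f_{n+1}=\mathcal{G}(p_n)$. Hence $\langle\phi_n-\psi_n,f_{m+1}\rangle_2=0$ for all $m\ge 1$.

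To finish, I would again use that $\mathcal{G}$ is self-adjoint (the Green's function is symmetric), so that $\langle\phi_n-\psi_n,f_{m+1}\rangle_2=\langle\phi_n-\psi_n,\mathcal{G}p_m\rangle_2=\langle\mathcal{G}(\phi_n-\psi_n),p_m\rangle_2$. Since $\mathcal{G}(\phi_n-\psi_n)$ is a polynomial whose $p_m$-coefficient vanishes for every $m\ge 1$, it must be a scalar multiple of $p_0=P_{0,k}$; applying $\Delta$ and using $\Delta p_0=0$ yields $\phi_n-\psi_n=\Delta\mathcal{G}(\phi_n-\psi_n)=0$, which is the desired first equality. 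The main obstacle throughout is the boundary-term bookkeeping: the argument rests entirely on the vanishing of the normal derivatives of $f_{m+1}$ at $q_1,q_2$, which is special to the $k=2,3$ families (Lemma~\ref{Lemma: Expansions for f}) and breaks down for $k=1$, precisely the reason that case is treated separately.
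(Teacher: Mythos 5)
Your proof is correct, and at its core it is the paper's argument run in transposed form. The paper lifts everything by one order: it sets $T_{n+1}=\mathcal{G}s_n$, uses $\ip{s_n}{\mathcal{G}h}_S=0$ together with Gauss--Green (with the same vanishing boundary terms you isolate) to show that $T_{n+1}+\chi\Delta^2T_{n+1}$ is $L^2$-orthogonal to all polynomials of degree at most $n-2$, expands it as $p_{n+1}+y_np_n+t_np_{n-1}$, computes $y_n,t_n$ by projection, and only then applies $\Delta$. You instead stay at the level of the differentiated identity, test $\phi_n-\psi_n$ against the spanning family $\{f_{m+1}\}_{m\ge1}=\{\mathcal{G}p_m\}$, and invert with $\mathcal{G}$ at the end; by self-adjointness of $\mathcal{G}$ these are the same pairings. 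The one place where your route costs extra work is the $m=n+1$ case: the identity $\tilde b_{n+1}\xi_n^{-2}=d_{n+1}^{-2}$, which you verify via $c_{n+1}=\|p_{n+1}\|_2^2/\|p_n\|_2^2$, is obtained for free in the paper because $T_{n+1}$ is monic of degree $n+1$ while $\Delta^2T_{n+1}$ has degree $n-1$, so the coefficient of $p_{n+1}$ is automatically $1$. Your handling of the restriction $m\ge1$ (needed since Lemma~\ref{Lemma: Expansions for f} only controls $\partial_nf_{t,k}$ for $t\ge2$) and the final step killing the $p_0$ component by applying $\Delta$ are both sound, and your dispatch of the second displayed equality via $\Delta f_{n+1}=p_n$ and Corollary~\ref{corfnsnbn} is exactly what the paper's closing sentence leaves implicit.
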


\begin{proof}
For $n\geq 0$, let  $$T_{n+1}\qty(x)=-\int_{SG}G\qty(x, y) s_n\qty(y)\dd{\mu}\qty(y).$$ It is clear that $\Delta T_{n+1}\qty(x)=s_n\qty(x)$ and $T_{n+1}$ is a polynomial of degree $n+1$, with $$T_{n+1}\qty(x)=p_{n+1}\qty(x)+\text{lower\, order\, terms}.$$ Given any polynomial $h$ of degree at most $n-2$, let $g\qty(x)=-\int_{SG}G\qty(x, y)h\qty(y)\, \dd{\mu}\qty(y)$. Since $g$ is a polynomial of degree at most $n-1$, we see that 
\begin{align}
    \ip{s_n}{g}_S&=\int_{SG}s_n\qty(x)g\qty(x)\, \dd{\mu}\qty(x)+\chi\int_{SG}\Delta s_n\qty(x) \Delta g\qty(x)\, \dd{\mu}\qty(x)\\
    &=\int_{SG}s_n\qty(x)g\qty(x)\, \dd{\mu}\qty(x)+\chi\int_{SG}\Delta s_n\qty(x) h\qty(x)\, \dd{\mu}\qty(x)=0.
    \end{align}
Consequently, using the Gauss-Green formula we have
\begin{align}
 0&=\int_{SG}\qty[s_n\qty(x)g\qty(x)+\chi \Delta s_n\qty(x)\Delta g\qty(x)]\dd{\mu}\qty(x)\\
 &=\int_{SG}\qty[T_{n+1}\qty(x)\Delta g\qty(x)+\chi \Delta s_n\qty(x)\Delta g\qty(x)]\dd{\mu}\qty(x) +\sum\limits_{\ell=0}^2 g\qty(q_{\ell})\partial_n T_{n+1}\qty(q_{\ell})-T_{n+1}\qty(q_{\ell})\partial_ng\qty(q_{\ell})\\
 &= \int_{SG}\Delta g\qty(x)\qty[T_{n+1}\qty(x)+\chi \Delta^2 T_{n+1}\qty(x)]\dd{\mu}\qty(x) +\sum\limits_{\ell=0}^2 g\qty(q_{\ell})\partial_n T_{n+1}\qty(q_{\ell})-T_{n+1}\qty(q_{\ell})\partial_ng\qty(q_{\ell}).\\
\end{align}
However, $\sum\limits_{\ell=0}^2 T_{n+1}\qty(q_{\ell})\partial_ng\qty(q_{\ell}) - g\qty(q_{\ell})\partial_n T_{n+1}\qty(q_{\ell})=0$, so 
$$\int_{SG}h\qty(x)\qty[T_{n+1}\qty(x)+\chi \Delta^2 T_{n+1}\qty(x)]\dd{\mu}\qty(x)=0$$ for all polynomial $h$ of degree at most $n-2$. It follows that we can write $$T_{n+1}+\chi \Delta^2T_{n+1} =p_{n+1}+y_np_n+t_np_{n-1}.$$ Now, $$y_n=d^2_n\ip{T_{n+1}+\chi \lap^2T_{n+1}}{p_n}_2=d_n^2\ip{T_{n+1}}{p_n}_2=d_n^{2}\ip{s_n}{f_{n+1}}_2=d_n^2\ip{s_n}{f_{n+1}}_S=a_nd_n^2\xi_n^{-2}.$$
Similarly,
$$t_n=d_{n-1}^2\ip{T_{n+1}+\chi \lap^2T_{n+1}}{p_{n-1}}_2= d_{n-1}^2\ip{f_n}{s_n}_S=d_{n-1}^2\|s_n\|_S^2=d_{n-1}^2\xi_{n}^{-2}.$$ 

Using \cite[Theorem 3.2]{OST} and taking the Laplacian on both sides yield the result.
\end{proof}

  \begin{remark}\label{rem:matrix}
\begin{enumerate}
  \item Note that if we write $s_n\qty(x)=\sum\limits_{\ell=0}^nz_{\ell, n}P_{\ell}\qty(x)$ and recall that $p_n\qty(x)=\sum\limits_{\ell-0}^n\omega_{\ell, n}P_\ell\qty(x)$ with $\omega_{n, n}=z_{n, n}=1$, and  substitute these in~\eqref{eq: Sobolev ODE}, we obtain the following recursive formulas.
\begin{align}
z_{n-1,n}&=\omega_{n, n+1}+a_nd_n^2\xi_n^{-2},\\
z_{n-2,n}&=-\chi+\omega_{n-1,n+1}+a_nd_n^2\xi_n^{-2}\omega_{n-1,n}+d_{n-1}^2\xi_n^{-2},\\
z_{n-3,n}&=-\chi\qty(\omega_{n, n+1}+a_nd_n^2\xi_n^{-2})+ \omega_{n-1,n+1}+a_nd_n^2\xi_n^{-2}\omega_{n-2,n}+d_{n-1}^2\xi_n^{-2}\omega_{n-2,n-1},\\
z_{n-\ell,n}&=-\chi z_{n-\ell +2,n} +\omega_{n-\ell+1, n+1}+a_nd_n^2\xi_n^{-2}\omega_{n-\ell+1,n}+ d_{n-1}^2\xi_n^{-2}\omega_{n-\ell+1, n-1},\quad \ell = 4, 5, \hdots n.
\end{align}

\item 
We can rewrite~\eqref{3-term recurrence} in terms of the following matrix:
  $$\mathcal{A}\vv{s}=\vv{f},$$ where $\mathcal{A}$ is a semi-infinite upper triangular  matrix such that
  \begin{align*}
      \mathcal{A}_{n,n}&=\tilde{b}_n, \,\, \, \mathcal{A}_{n,n+1}=a_n, \,\, \, \mathcal{A}_{n,n+2}=1,\, \, \, \text{and}\, \, \,  \mathcal{A}_{n,m}=0 \, \, \text{otherwise, with}\\
      \vv{s}&=\qty(s_0, s_1, s_2, \hdots )^T,\quad \vv{f}=\qty(f_2,f_3,f_4, \hdots )^T.
  \end{align*}
 
\end{enumerate}
 \end{remark}

We collect below a number of properties of the  Sobolev OPs when  $k=2$ or $3$. 
In particular, the next results give some refined estimates for  $\|s_{n}\|_S$ and the coefficients $a_n$ and $\tilde{b}_n$ in terms of the the norm of Legendre polynomials
and $\chi$. 

\begin{proposition}\label{Proposition: L2, H1, coefficients estimates}
Let  $\qty{a_n}$ and $\qty{\tilde b_n}$ be defined as in \eqref{3-term recurrence}. Then for $n\ge 1$ the following estimates hold.
\begin{equation}\label{SobolevL2H1Estimate}
\begin{cases}
  \|p_n\|_{2}\leq  \|s_{n}\|_{2} \leq \|G\|_{2}\|p_{n-1}\|_{2}, \\
 \qty(1+\chi\|G\|_2^{-2}) \|p_n\|_{2}^2 \leq      \|s_n\|_{S}^2\leq \qty(\|G\|_{2}^2+\chi)\|p_{n-1}\|_{2}^2 ,
   \end{cases}
   \end{equation}
   
 and 
   
\begin{equation} \label{anbbounds}
\begin{cases}
 \qty|a_n|\leq \mini{\|G\|_{2}}{\chi^{-1}\|G\|_{2}^3},\\
   0<\tilde{b}_n\leq \mini{\|G\|_{2}^2}{\chi^{-1}\|G\|_{2}^4}.
   \end{cases}
\end{equation}
where $p_{n}$ is the $n$-th monic Legendre orthogonal polynomial. 

In particular, it follows that  

$$\|s_n\|_{S}^2=O \qty(\chi), \quad 
a_n=O\qty(\chi^{-1}), \quad \text{and}\quad 
\tilde b_n=O\qty(\chi^{-1}).$$ 
Furthermore, 
$$ 
\lim\limits_{\chi \to \infty} \chi \tilde{b}_n=\frac{\|p_{n}\|_{2}^2}{\|p_{n-2}\|_{2}^2}, \quad
\lim\limits_{\chi \to \infty} \chi a_n=\frac{\ip{f_{n+1}}{f_{n}}}{\|p_{n-1}\|_{2}^2},\,  \text{and}\quad 
\lim\limits_{\chi \to \infty} \|\lap s_n\|_2=\|p_{n-1}\|_2$$ 
uniformly in $n$.
\end{proposition}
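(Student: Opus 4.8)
The plan is to establish the inequalities in \eqref{SobolevL2H1Estimate} and \eqref{anbbounds} first, since all the asymptotic statements ($O(\chi)$, $O(\chi^{-1})$, and the limits) follow by inspection once these are in hand. The backbone throughout will be the observation, already made in Corollary~\ref{corfnsnbn} and Lemma~\ref{Lemma: Family 2 and 3 is self contained on Green's, Family 1 isn't}, that for $k=2$ or $3$ we have $\Delta s_n = s_n'$ where I will write $s_n = \mathcal{G}(\text{something})$; more precisely, since $s_n$ is a Sobolev OP expressed in the monomial basis, its Laplacian is again a polynomial of degree $n-1$, and the Green operator $\mathcal{G}$ inverts $\Delta$ up to boundary terms that vanish for these families. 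I would record at the outset the operator-norm bound $\|\mathcal{G}f\|_2 \le \|G\|_2 \|f\|_2$ coming from \eqref{eq: Green's operator explicit formula}, which is what converts $\|s_n\|_S$-information into $\|p_{n-1}\|_2$-information.

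For the $L^2$ and Sobolev-norm bounds \eqref{SobolevL2H1Estimate}, the lower bound $\|p_n\|_2 \le \|s_n\|_2$ is immediate from the extremal (least-squares) characterization of the Legendre OP $p_n$: among all monic degree-$n$ polynomials, $p_n$ minimizes the $L^2$ norm, and $s_n$ is one competitor. For the upper bound I would use that $\Delta s_n$ is a monic-type degree-$(n-1)$ polynomial orthogonal (in a suitable sense) to lower-order terms, so writing $s_n$ via $\mathcal{G}$ applied to $\Delta s_n$ gives $\|s_n\|_2 \le \|G\|_2 \|\Delta s_n\|_2$, and then bounding $\|\Delta s_n\|_2$ by $\|p_{n-1}\|_2$ using the minimality of $p_{n-1}$ among monic degree-$(n-1)$ polynomials. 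The two-sided Sobolev bound then follows by combining $\|s_n\|_S^2 = \|s_n\|_2^2 + \chi\|\Delta s_n\|_2^2$ with these pieces: the lower bound uses $\|\Delta s_n\|_2 \ge \|G\|_2^{-1}\|s_n\|_2 \ge \|G\|_2^{-1}\|p_n\|_2$, and the upper bound uses $\|s_n\|_2 \le \|G\|_2\|p_{n-1}\|_2$ together with $\|\Delta s_n\|_2 \le \|p_{n-1}\|_2$.

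For \eqref{anbbounds}, I would start from the defining formulas $a_n = \langle f_{n+1}, s_n\rangle_S/\|s_n\|_S^2$ and $\tilde b_n = \langle f_{n+1}, s_{n-1}\rangle_S/\|s_{n-1}\|_S^2$ from Theorem~\ref{th:k23recurrence}. Applying Cauchy--Schwarz in the $S$-inner product gives $|a_n| \le \|f_{n+1}\|_S/\|s_n\|_S$, and since $f_{n+1} = \mathcal{G}(p_n)$ has $\Delta f_{n+1} = p_n$, I can compute $\|f_{n+1}\|_S^2 = \|f_{n+1}\|_2^2 + \chi\|p_n\|_2^2$ explicitly and bound $\|f_{n+1}\|_2 \le \|G\|_2\|p_n\|_2$. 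Feeding this together with the lower bounds on $\|s_n\|_S$ from \eqref{SobolevL2H1Estimate} produces the two competing estimates whose minimum appears in the $\min$; the $\|G\|_2$-bound dominates for small $\chi$ and the $\chi^{-1}\|G\|_2^3$-bound for large $\chi$. The positivity $\tilde b_n > 0$ I would extract from Corollary~\ref{corfnsnbn}, where $\tilde b_n$ is tied to the ratio $\|p_n\|_2^2/\|p_{n-2}\|_2^2$ (up to a positive $\chi$-dependent factor), or directly by showing the relevant $S$-inner product is positive. The main obstacle I anticipate is the \emph{uniformity in $n$} of the three limits as $\chi\to\infty$: one must check that the error terms in, say, $\chi\tilde b_n \to \|p_n\|_2^2/\|p_{n-2}\|_2^2$ are controlled by constants independent of $n$. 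This requires showing that the $L^2$-parts of $\|s_n\|_S^2$ and $\langle f_{n+1}, s_{n-1}\rangle_S$ stay bounded relative to their $\chi$-weighted $\Delta$-parts uniformly in $n$; here I would lean on the norm decay $\|p_n\|_2 \to 0$ established via Theorem~\ref{thm:normestimate} and \cite[Theorem 3.1]{OST} to keep all the $n$-dependent ratios under uniform control, and on the fact that $\lim_{\chi\to\infty}\|\Delta s_n\|_2 = \|p_{n-1}\|_2$ follows once $s_n/\sqrt\chi$ is seen to converge to the monic degree-$(n-1)$ $L^2$-minimizer, namely $p_{n-1}$, in the appropriate normalization.
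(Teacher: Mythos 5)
Your overall skeleton (norm bounds first, then coefficient bounds, then read off the asymptotics) matches the paper, and the lower bound $\|p_n\|_2\le\|s_n\|_2$ via monic $L^2$-minimality is exactly right. But there are two genuine gaps. First, for the upper bounds you invoke ``bounding $\|\Delta s_n\|_2$ by $\|p_{n-1}\|_2$ using the minimality of $p_{n-1}$ among monic degree-$(n-1)$ polynomials'' --- that inequality points the wrong way: since $\Delta s_n$ is itself monic of degree $n-1$, minimality of $p_{n-1}$ gives $\|\Delta s_n\|_2\ \ge\ \|p_{n-1}\|_2$, with equality only if $\Delta s_n=p_{n-1}$, which fails for finite $\chi$. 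The paper instead uses the extremal property of $s_n$ in the \emph{Sobolev} norm: $\|s_n\|_2^2+\chi\|\Delta s_n\|_2^2=\|s_n\|_S^2\le\|f_n\|_S^2=\|f_n\|_2^2+\chi\|p_{n-1}\|_2^2$, and then the correctly-oriented inequality $\|\Delta s_n\|_2\ge\|p_{n-1}\|_2$ lets one cancel the $\chi$-terms and conclude $\|s_n\|_2\le\|f_n\|_2\le\|G\|_2\|p_{n-1}\|_2$. Your chain $\|s_n\|_2\le\|G\|_2\|\Delta s_n\|_2$ is also not literal, since $s_n$ need not vanish on $V_0$, so $s_n\ne\mathcal G(\Delta s_n)$ without a harmonic correction.

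Second, Cauchy--Schwarz in the $S$-inner product cannot produce the $O(\chi^{-1})$ bounds: with $\|f_{n+1}\|_S\sim\sqrt{\chi}\,\|p_n\|_2$ and $\|s_n\|_S\sim\sqrt{\chi}\,\|p_{n-1}\|_2$, the quotient $\|f_{n+1}\|_S/\|s_n\|_S$ is $O(1)$ in $\chi$, which recovers only the $\|G\|_2$-type bound. The decisive observation in the paper is that the $\chi$-weighted part of the relevant inner products vanishes identically: $\chi\ip{\Delta f_{n+1}}{\Delta s_j}_2=\chi\ip{p_n}{\Delta s_j}_2=0$ for $j\le n$ because $\Delta s_j$ has degree $j-1<n$ and $p_n$ is $L^2$-orthogonal to lower-degree polynomials. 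Hence $\ip{f_{n+1}}{s_n}_S=\ip{f_{n+1}}{s_n}_2$ is bounded independently of $\chi$ while $\|s_n\|_S^2\ge\chi\|p_{n-1}\|_2^2$, which is where $\chi^{-1}$ comes from; moreover self-adjointness of the Green operator gives the exact identity $\ip{f_{n+1}}{s_{n-1}}_S=\|p_n\|_2^2$, which yields the positivity of $\tilde b_n$, the limit $\chi\tilde b_n\to\|p_n\|_2^2/\|p_{n-2}\|_2^2$, and its uniformity in $n$ all at once --- none of which your appeal to Corollary~\ref{corfnsnbn} or to generic positivity supplies. (Minor: the last limit follows from $\Delta s_n\to p_{n-1}$, not from any convergence of $s_n/\sqrt{\chi}$.)
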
 

\begin{proof}
The lower inequality in the first estimates in~\eqref{SobolevL2H1Estimate} follows from the fact that $p_n$ and $s_n$ are monic polynomials, while the upper estimate follow from the fact that $$\|s_n\|_2^2+\chi \|p_{n-1}\|_2^2\leq\|s_n\|_2^2+\chi \|\lap s_n\|_2^2=\|s_n\|_S^2\leq \|f_n\|_S^2=\|f_n\|_2^2+\chi\|p_{n-1}\|_2^2$$ and H\"older's inequality. 

The upper bound of the second estimate is established in a similar manner using the last inequality. The lower bound is proved as follows.
\begin{align}
    \|s_n\|_S^2&=\|s_n\|_2^2+\chi\|\lap s_n\|_2^2\\
    &\geq \|p_n\|_2^2+\chi \|p_{n-1}-a_{n-1}\lap s_{n-1} -\tilde{b}_{n-1}\lap s_{n-2}\|_2^2\\
    & \geq  \|p_n\|_2^2+\chi \|p_{n-1}\|_{2}^2 \geq \qty(1+\chi\|G\|_2^{-2}) \|p_n\|_2^2,
\end{align}
 where we used the fact that $\|p_{n}\|_2\leq \|G\|_2 \|p_{n-1}\|_2$ which is proved in \cite[Theorem 3.4]{OST}.

Next, we see that
\begin{align}
|a_n|&=\frac{|\ip{f_{n+1}}{s_n}_S|}{\|s_n\|_S^2}=\frac{|\ip{f_{n+1}}{s_{n}}_2+\chi \ip{p_n}{\lap s_n}_2|}{\|s_n\|_S^2}\\
&= \frac{|\ip{f_{n+1}}{s_{n}}_2|}{\|s_n\|_S^2} \leq \frac{\|f_{n+1}\|_{2}\|s_{n}\|_{2}}{\|s_{n}\|_S^2}\\
& \leq \frac{\qty(\|G\|_{2}\|p_n\|_2)\qty(\|G\|_{2}\|p_{n-1}\|_{2})}{\chi \|p_{n-1}\|_{2}^2} \leq \chi^{-1}\|G\|_{2}^3.
\end{align}
At the same time, we have 
$$|a_n|\leq \frac{\|f_{n+1}\|_{2}\|s_{n}\|_{2}}{\|s_{n}\|_S^2} \leq \frac{\|f_{n+1}\|_{2}}{\|s_{n}\|_S} \leq \frac{\|G\|_{2}\|p_n\|_2}{\|p_{n}\|_2}=\|G\|_{2}.$$

Finally, 
$$\tilde{b}_{n}=\frac{\ip{f_{n+1}}{s_{n-1}}_S}{\|s_{n-1}\|_S^2}=\frac{\|p_{n}\|_{2}^2}{\|s_{n-1}\|_S^2}=\frac{\|p_{n}\|_{2}^2}{\|s_{n-1}\|_2^2+\chi\|\lap s_{n-1}\|_2^2}< \frac{\|p_{n}\|_{2}^2}{\|p_{n-1}\|_2^2+\chi\| p_{n-1}\|_2^2},$$ 
where we have used the fact that $\|\lap s_{n}\|_2> \|p_n\|_2.$ 
We can thus obtain the estimates for $\tilde b_n$ by equations \eqref{SobolevL2H1Estimate}. The rest of the asymptotics easily follows.
\end{proof}

\begin{remark}\label{rem:decayabn}
We can also prove that 
 $|a_n|\leq \chi^{-1}\|G\|_2^2\frac{\|p_n\|_2}{\|p_{n-1}\|_2}.$
 
     Intuitively it seems that $\lim\limits_{n\to \infty} \frac{\|p_n\|_2}{\|p_{n-1}\|_2}= 0$ as  $\lim\limits_{n\to \infty}\|p_n\|_2=0$ at a rate faster than exponential. However, we have not been able to prove this.
 
 Using the upper bounds in equations \eqref{SobolevL2H1Estimate} and \eqref{anbbounds} we can see that both $\|s_n\|_2$ and $\|s_n\|_S$  decay quickly, due to the decay of $\|p_n\|_2$ norms of the Legendre OPs which was proved  in \cite{OST}.
 
\end{remark}

When $\chi> 0$, we have the following estimate  for $\|s_n\|_\infty$.
 
\begin{corollary}\label{Corollary: laplacian-L2 and Linfinity estimates}
Under above conditions, there exists a positive constant $C>0$ such that for $n \ge 1$, we have 
\begin{equation}\label{L2EstimateofSobolevLaplacian}
   \begin{cases}
  \|\lap s_n\|_{2}^2\le \qty(1+\chi^{-1}\|G\|_{2}^2)\|p_{n-1}\|_{2}^2\\
\|s_n\|_{\infty}\le C\qty(1+\chi^{-\frac12})\|p_{n-1}\|_{2}.
\end{cases}
\end{equation}
\end{corollary}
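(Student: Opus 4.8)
The plan is to prove the two estimates in Corollary~\ref{Corollary: laplacian-L2 and Linfinity estimates} by first extracting the $L^2$ bound on $\Delta s_n$ directly from the second estimate in~\eqref{SobolevL2H1Estimate}, and then bootstrapping this together with the Green's-function representation of $s_n$ to obtain the $L^\infty$ bound. The whole corollary is essentially a packaging of the norm estimates already proved in Proposition~\ref{Proposition: L2, H1, coefficients estimates}, so I expect no serious obstacle; the work is in assembling the right inequalities.

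\textbf{The $L^2$-Laplacian estimate.} First I would start from the definition $\|s_n\|_S^2 = \|s_n\|_2^2 + \chi\|\Delta s_n\|_2^2$, which gives immediately
$$\|\Delta s_n\|_2^2 = \chi^{-1}\qty(\|s_n\|_S^2 - \|s_n\|_2^2) \le \chi^{-1}\|s_n\|_S^2.$$
Now I would invoke the upper bound from the second line of~\eqref{SobolevL2H1Estimate}, namely $\|s_n\|_S^2 \le \qty(\|G\|_2^2 + \chi)\|p_{n-1}\|_2^2$, to conclude
$$\|\Delta s_n\|_2^2 \le \chi^{-1}\qty(\|G\|_2^2 + \chi)\|p_{n-1}\|_2^2 = \qty(1 + \chi^{-1}\|G\|_2^2)\|p_{n-1}\|_2^2,$$
which is exactly the first estimate in~\eqref{L2EstimateofSobolevLaplacian}. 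This step is routine.

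\textbf{The $L^\infty$ estimate.} For the sup-norm bound I would use the Green's operator representation together with the fact that $s_n$ differs from $\GG(\Delta s_n)$ only by a harmonic correction matching the (vanishing) boundary data of $s_n$. Concretely, I would write $s_n$ as the sum of $-\int_{SG}G(x,y)\Delta s_n(y)\,\dd{\mu}(y)$ and a harmonic function carrying the boundary values of $s_n$, then estimate each piece: the integral term is bounded in sup-norm by $\sup_x\qty(\int_{SG}|G(x,y)|\,\dd{\mu}(y))^{1/2}\|\Delta s_n\|_2 \le C\|\Delta s_n\|_2$ via Cauchy--Schwarz and the boundedness of the Green's kernel, while the harmonic part is controlled by the boundary values of $s_n$, which are in turn bounded by $\|s_n\|_2$ up to a constant (since on a finite-dimensional space boundary evaluation is a bounded functional, or more simply via the existing $L^2$ control). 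Combining with the already-established bounds $\|s_n\|_2 \le \|G\|_2\|p_{n-1}\|_2$ from~\eqref{SobolevL2H1Estimate} and the $L^2$-Laplacian bound just proved gives
$$\|s_n\|_\infty \le C\qty(\|\Delta s_n\|_2 + \|s_n\|_2) \le C\qty(\qty(1+\chi^{-1}\|G\|_2^2)^{1/2} + \|G\|_2)\|p_{n-1}\|_2 \le C'\qty(1 + \chi^{-\frac12})\|p_{n-1}\|_2,$$
after absorbing the $\chi$-independent constants and using $\qty(1+\chi^{-1}\|G\|_2^2)^{1/2} \le 1 + \|G\|_2\chi^{-1/2}$.

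\textbf{The main subtlety} I anticipate is justifying the sup-norm control of $s_n$ in terms of its $L^2$-data uniformly in $\chi$ and $n$: one must ensure the constant $C$ in the $L^\infty$ bound does not secretly depend on $n$. I would handle this by relying on the continuity and boundedness of the Green's kernel $G(x,y)$ on the compact set $SG\times SG$ (so that $\sup_x\int_{SG}|G(x,y)|\,\dd{\mu}(y)$ is a finite absolute constant) and on the fact that, since $s_n$ vanishes on $V_0$, the harmonic correction term is identically zero or at worst controlled by $\|s_n\|_2$ through a fixed Sobolev embedding $W^{1,2}(SG)\hookrightarrow C(SG)$ that is available on $SG$ independently of $n$. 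This decouples all $n$-dependence into the single factor $\|p_{n-1}\|_2$, yielding the uniform constant claimed.
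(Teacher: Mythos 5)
Your first estimate is exactly the paper's argument: drop $\|s_n\|_2^2$ from $\|s_n\|_S^2=\|s_n\|_2^2+\chi\|\lap s_n\|_2^2$ and apply the upper bound in \eqref{SobolevL2H1Estimate}. For the sup-norm bound, the paper simply cites the a priori inequality $\|u\|_\infty\le C\qty(\|u\|_2+\|\lap u\|_2)$ for $u\in\operatorname{dom}\lap$ from \cite[Lemma 4.6]{SU} and plugs in the two $L^2$ bounds, which is precisely the shape of your final display; so at the level of what is actually used, your proof and the paper's coincide.

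The problem is in your attempt to rederive that embedding from the Green's-function decomposition. Your supporting claim that ``$s_n$ vanishes on $V_0$'' is false: for $k=2,3$ the monomials $P_{j,k}$ vanish at $q_0$ but not at $q_1,q_2$ (e.g.\ $P_{j,2}\qty(q_1)=\beta_j$), so the harmonic correction $h$ with $h|_{V_0}=s_n|_{V_0}$ is genuinely nonzero. To control it you need $\max_{V_0}|s_n|$, and the maximum principle only gives $\|h\|_\infty=\max_{V_0}|s_n|\le\|s_n\|_\infty$, which makes the argument circular; bounding point evaluation by $\|s_n\|_2$ with a constant independent of $n$ is not automatic from finite-dimensionality (that constant would a priori grow with $n$), and your fallback of invoking ``a fixed Sobolev embedding $W^{1,2}\hookrightarrow C\qty(SG)$'' is exactly the statement you set out to prove. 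The fix is simply to take the inequality $\|u\|_\infty\le C\qty(\|u\|_2+\|\lap u\|_2)$ as a cited black box, as the paper does; with that substitution the rest of your computation, including the elementary bound $\qty(1+\chi^{-1}\|G\|_2^2)^{1/2}\le 1+\|G\|_2\chi^{-1/2}$, is correct and yields the claimed estimate with a constant depending only on $SG$.
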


\begin{proof}
The first estimate in~\eqref{L2EstimateofSobolevLaplacian} follows directly  from the second estimate of \eqref{SobolevL2H1Estimate}. For the second estimate, we   note that for any $u\in \operatorname{dom}\lap$, there is a constant $C>0$ such that $\|u\|_{\infty}\le C\qty(\|u\|_{2}+\|\lap u\|_{2})$ \cite[4.16, Lemma 4.6]{SU}. 
\end{proof}

By using the estimates in Proposition \ref{Proposition: L2, H1, coefficients estimates} and the recurrence relations in Theorem~\ref{th:k23recurrence}, we have the following asymptotic properties for $\qty{s_n\qty(:,\chi)}$ when $\chi$ tends to $\infty$.

\begin{corollary}\label{Corollary: Convergence result for lambda} There exists a positive constant $C>0$ such that for all $n\ge3$  we have 
\begin{equation*}
   \begin{cases}   \|s_n\qty(\cdot, \chi)-f_n\|_{2}\le 2\chi^{-1}\|G\|_{2}^5\|p_{n-3}\|_{2},\\
      \|s_n\qty(\cdot, \chi)-f_n\|_{\infty}\le C\qty(\chi^{-1}+\chi^{-\frac 32})\|p_{n-3}\|_{2},\\
      \|s_n\qty(\cdot, \chi) -f_n\|_S \leq \sqrt 2\chi^{-1}\sqrt{\|G\|_2^2+\chi}\,\|G\|_2^4\|p_{n-3}\|_2.
      \end{cases}
\end{equation*}
  
 Consequently,
 \begin{equation}\label{Asymptotic extimate of S_n(lambda) to fn}
\begin{cases}
\lim\limits_{\chi \to \infty} s_n\qty(x;\chi)=f_n\qty(x),\\
\lim\limits_{\chi \to \infty} \chi\qty(s_n\qty(\cdot, \chi)-f_n)=-\frac{\inner{f_n}{f_{n-1}}_{2}}{\|p_{n-2}\|_{2}^2}f_{n-1}-\frac{\|p_{n-1}\|_{2}^2}{\|p_{n-3}\|_{2}^2}f_{n-2}, \\
\lim\limits_{\chi \to \infty} \lap s_n \qty(x;\chi)= p_{n-1}(x),
\end{cases}
\end{equation}
where the limits are uniform in both $x$ and $n$.
\end{corollary}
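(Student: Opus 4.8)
The plan is to prove the three norm estimates first, and then derive both the limiting statements and their uniformity as consequences. The starting point is the recurrence from Theorem~\ref{th:k23recurrence}, which can be rearranged as $s_n - f_n = -a_{n-1}s_{n-1} - \tilde b_{n-1}s_{n-2}$. This is the key identity: it expresses the difference $s_n - f_n$ entirely in terms of lower-index Sobolev OPs together with the coefficients $a_{n-1}, \tilde b_{n-1}$, both of which were already bounded by $O(\chi^{-1})$ in Proposition~\ref{Proposition: L2, H1, coefficients estimates}. So the strategy is to combine the coefficient bounds $|a_{n-1}|\le \chi^{-1}\|G\|_2^3$ and $\tilde b_{n-1}\le \chi^{-1}\|G\|_2^4$ with the norm bounds on $s_{n-1}, s_{n-2}$ (in $\|\cdot\|_2$, $\|\cdot\|_\infty$, and $\|\cdot\|_S$ respectively) coming from~\eqref{SobolevL2H1Estimate}, \eqref{L2EstimateofSobolevLaplacian}, and the preceding corollaries.

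For the $L^2$ estimate I would write
$$\|s_n - f_n\|_2 \le |a_{n-1}|\,\|s_{n-1}\|_2 + \tilde b_{n-1}\,\|s_{n-2}\|_2,$$
then insert $|a_{n-1}|\le \chi^{-1}\|G\|_2^3$, $\tilde b_{n-1}\le \chi^{-1}\|G\|_2^4$, and the upper bounds $\|s_{n-1}\|_2\le \|G\|_2\|p_{n-2}\|_2$, $\|s_{n-2}\|_2\le \|G\|_2\|p_{n-3}\|_2$ from~\eqref{SobolevL2H1Estimate}. Using the monotonicity $\|p_{n-2}\|_2 \le \|G\|_2\|p_{n-3}\|_2$ (from \cite[Theorem 3.4]{OST}) to express everything in terms of $\|p_{n-3}\|_2$, both terms collapse to a multiple of $\chi^{-1}\|G\|_2^5\|p_{n-3}\|_2$, giving the stated factor $2\chi^{-1}\|G\|_2^5\|p_{n-3}\|_2$. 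The $\|\cdot\|_\infty$ bound follows the same template but uses $\|s_{n-1}\|_\infty, \|s_{n-2}\|_\infty \le C(1+\chi^{-1/2})\|p_{n-2}\|_2$ from Corollary~\ref{Corollary: laplacian-L2 and Linfinity estimates}, producing the combined power $\chi^{-1}+\chi^{-3/2}$. The $\|\cdot\|_S$ bound uses $\|s_{n-1}\|_S \le \sqrt{\|G\|_2^2+\chi}\,\|p_{n-2}\|_2$ from~\eqref{SobolevL2H1Estimate}, which is the source of the $\sqrt{\|G\|_2^2+\chi}$ prefactor.

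For the limits in~\eqref{Asymptotic extimate of S_n(lambda) to fn}, the first ($s_n \to f_n$) is immediate from the $\chi^{-1}$ decay of $\|s_n-f_n\|$. The third ($\Delta s_n \to p_{n-1}$) follows from the last asymptotic in Proposition~\ref{Proposition: L2, H1, coefficients estimates}. The second, more delicate limit requires identifying the exact leading order of $\chi(s_n-f_n)$: I would substitute the $\chi\to\infty$ limits $\chi a_{n-1}\to \ip{f_n}{f_{n-1}}/\|p_{n-2}\|_2^2$ and $\chi\tilde b_{n-1}\to \|p_{n-1}\|_2^2/\|p_{n-3}\|_2^2$ (both from Proposition~\ref{Proposition: L2, H1, coefficients estimates}) into $\chi(s_n-f_n) = -(\chi a_{n-1})s_{n-1} - (\chi\tilde b_{n-1})s_{n-2}$, and use $s_{n-1}\to f_{n-1}$, $s_{n-2}\to f_{n-2}$ in the relevant norm to recover the claimed expression. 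The main obstacle, and the point demanding the most care, is the \emph{uniformity in $n$}: every bound above is already expressed as a constant times $\|p_{n-3}\|_2$, and since $\|p_j\|_2 \to 0$ faster than exponentially (Remark~\ref{rem:decayabn}, from \cite{OST}), the $n$-dependence is absorbed into a decaying factor, so the constants multiplying $\chi^{-1}$ (or $\chi^{-3/2}$) are genuinely independent of $n$. I must verify that the coefficient bounds and norm bounds invoked are themselves uniform in $n$ — which they are, as they depend only on $\|G\|_2$ and $\chi$ — so that passing $\chi\to\infty$ gives convergence uniform in both $x$ and $n$ without any hidden $n$-growth.
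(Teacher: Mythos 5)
Your proposal matches the paper's (very terse) proof exactly: rearrange the recurrence \eqref{3-term recurrence} as $s_n-f_n=-a_{n-1}s_{n-1}-\tilde b_{n-1}s_{n-2}$, feed in the coefficient bounds $|a_{n-1}|\le\chi^{-1}\|G\|_2^3$, $\tilde b_{n-1}\le\chi^{-1}\|G\|_2^4$ and the $L^2$, $L^\infty$ and $S$-norm bounds on $s_{n-1},s_{n-2}$ from Proposition~\ref{Proposition: L2, H1, coefficients estimates} and Corollary~\ref{Corollary: laplacian-L2 and Linfinity estimates}, and note that all $n$-dependence is absorbed into the decaying factor $\|p_{n-3}\|_2$. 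Two small nits: the plain triangle inequality gives the constant $2$ rather than the stated $\sqrt2$ in the $\|\cdot\|_S$ estimate (to get $\sqrt2$, use the $S$-orthogonality of $s_{n-1}$ and $s_{n-2}$ and Pythagoras before bounding), and the third limit $\lap s_n\to p_{n-1}$ follows from applying $\lap$ to the recurrence and using the $O\qty(\chi^{-1})$ decay of the coefficients, not merely from the norm asymptotic $\lim_{\chi\to\infty}\|\lap s_n\|_2=\|p_{n-1}\|_2$.
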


\begin{proof}
The first and third estimates come from the recurrence relation \eqref{3-term recurrence} and the estimates for $a_n$, $\tilde b_n$ and $s_n$ in Proposition \ref{Proposition: L2, H1, coefficients estimates}. The second estimate is derived similarly, except that we also need Corollary \ref{Corollary: laplacian-L2 and Linfinity estimates} for $L^\infty$  estimate. The uniform convergence of $s_n\qty(x, \chi)$ to $f_n\qty(x)$ is a direct consequence of this.

Finally, observe that $ \chi\qty(s_n\qty(\cdot, \chi) - f_n)=-\chi a_{n-1}s_{n-1}-\chi \tilde{b}_{n-1}s_{n-2}.$ The result follows again from Proposition \ref{Proposition: L2, H1, coefficients estimates}.

\end{proof}

\begin{remark}\label{remark: Convergence result not true for n<3}
Corollary \ref{Corollary: Convergence result for lambda} is not true for $n<3$. For example, $s_0=P_0:=P_{0,k}$, and $s_1=P_1-\frac{\inner{P_1}{P_0}_{2}}{ \|P_0\|_{2}^2}P_0=p_1$. By using \eqref{3-term recurrence}, $s_2\qty(\cdot, \chi)$ converges to $f_2-\frac{\|p_1\|_{2}^2}{\|p_0\|_{2}^2}p_0$ uniformly as $\chi\rightarrow\infty$.

We also observe that $\qty{f_n\qty(x)=\lim\limits_{\chi \to \infty}s_n\qty(x, \chi)}$ is not an OP family. Indeed, $$ \ip{f_n}{f_m}_S=\begin{cases}
 \ip{f_n}{f_m}_2, \, \text{when}\, n\neq m,\\
 \|f_n\|_2^2+\chi\|p_{n-1}\|_2^2=\|f_n\|_2^2+\chi d_{n-1}^{-2}, \, \text{when}\, n=m
 \end{cases}$$
 However, for $|n-m|\geq 3,$ 
 $$\ip{f_{n}}{f_{m}}_2=\ip{p_n+b_{n-1}p_{n-1}+c_{n-1}p_{n-2}}{p_m+b_{m-1}p_{m-1}+c_{m-1}p_{m-2}}_2=0.$$
 Thus, the set of polynomials $\qty{f_{n}}_{n=0}^{\infty}$ where $f_0=0$ is ``almost'' orthogonal with respect to both the standard inner product as well as with the Sobolev inner product.
Similarly, $\qty{-\frac{\inner{f_n}{f_{n-1}}_{2}}{\|p_{n-2}\|_{2}^2}f_{n-1}-\frac{\|p_{n-1}\|_{2}^2}{\|p_{n-3}\|_{2}^2}f_{n-2}=\lim\limits_{\chi \to \infty}\chi\qty(s_n\qty(\cdot, \chi)-f_n)}$ is also ``almost'' orthogonal with respect to both the standard inner product as well as with the Sobolev inner product.
\end{remark}

 We can use these remarks to construct a related family of orthogonal polynomials $\qty{\tilde{f}_n\qty(\cdot, \chi)}_{n\geq 0}$ with respect to $\ip{\cdot}{\cdot}_S$ as follows
 \begin{equation}\label{asso-poly}
 \begin{cases}
 \tilde{f}_0=f_0=0,\\
 \tilde{f}_1=f_1,\\
 \tilde{f}_n\qty(x, \chi) =f_n\qty(x) +t_{n}\qty(\chi)\tilde{f}_{n-1}\qty(x, \chi)+ u_n\qty(\chi) \tilde{f}_{n-2}\qty(x, \chi),\quad n\geq 2,
 \end{cases}
\end{equation}

where the sequences $\qty{t_n\qty(\chi)}_{n\geq 2}$ and $\qty{u_n\qty(\chi)}_{n\geq 2}$ are chosen so that $\ip{\tilde{f}_n\qty(\cdot, \chi)}{\tilde{f}_m\qty(\cdot, \chi)}_S=0$ for $n\neq m$.

We note that a similar argument was used in \cite{Coh75} to construct variations of the classical Sobolev-Legendre OPs. 

\begin{proposition}\label{prop:orthoasso}
There exist coefficients $\qty{t_n\qty(\chi)}_{n\geq 2}$ and $\qty{u_n\qty(\chi)}_{n\geq 2}$ such that $\qty{\tilde{f}_n\qty(\cdot, \chi)}_{n\geq 0}$ given by~\eqref{asso-poly} is an orthogonal set of polynomials in the Sobolev inner product space.
\end{proposition}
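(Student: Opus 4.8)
The plan is to construct the family $\qty{\tilde f_n}$ by induction on $n$, exploiting the ``banded'' structure of the $\qty{f_n}$ recorded in Remark~\ref{remark: Convergence result not true for n<3}: for distinct indices one has $\ip{f_n}{f_m}_S=\ip{f_n}{f_m}_2$, and this quantity vanishes whenever $|n-m|\geq 3$. The inductive hypothesis at stage $n-1$ will assert that $\tilde f_0,\dots,\tilde f_{n-1}$ are pairwise $S$-orthogonal and that each $\tilde f_j$ lies in $\operatorname{span}\qty{f_0,\dots,f_j}$, with $\tilde f_j$ monic of degree $j$ for $j\geq 1$ (so in particular $\tilde f_j\neq 0$). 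The base cases $\tilde f_0=0$ and $\tilde f_1=f_1$ are trivially orthogonal, and the span/degree claims hold since $f_1$ is monic of degree $1$.

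For the inductive step I would first observe that $f_n$ is \emph{automatically} orthogonal to $\tilde f_j$ for every $j\leq n-3$. Indeed, by the inductive hypothesis $\tilde f_j\in\operatorname{span}\qty{f_0,\dots,f_j}$, so $\ip{f_n}{\tilde f_j}_S$ is a linear combination of terms $\ip{f_n}{f_i}_S$ with $i\leq j\leq n-3$; each such term has $n-i\geq 3$ and hence vanishes by the banded structure (the case $i=0$ being trivial since $f_0=0$). Writing $\tilde f_n=f_n+t_n\tilde f_{n-1}+u_n\tilde f_{n-2}$ as in~\eqref{asso-poly}, and using that $\tilde f_{n-1}$ and $\tilde f_{n-2}$ are already orthogonal to every $\tilde f_j$ with $j\leq n-3$, one concludes $\ip{\tilde f_n}{\tilde f_j}_S=0$ for all $j\leq n-3$ irrespective of the choice of $t_n,u_n$.

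It then remains only to enforce orthogonality against $\tilde f_{n-1}$ and $\tilde f_{n-2}$, and here the two free parameters exactly match the two remaining conditions. Because $\ip{\tilde f_{n-1}}{\tilde f_{n-2}}_S=0$ by the inductive hypothesis, the two equations decouple: testing $\tilde f_n$ against $\tilde f_{n-1}$ forces $t_n=-\ip{f_n}{\tilde f_{n-1}}_S/\|\tilde f_{n-1}\|_S^2$, and testing against $\tilde f_{n-2}$ forces $u_n=-\ip{f_n}{\tilde f_{n-2}}_S/\|\tilde f_{n-2}\|_S^2$. These are well defined for $n\geq 3$ since $\tilde f_{n-1},\tilde f_{n-2}$ are nonzero monic polynomials and $\|\cdot\|_S$ is positive definite; for $n=2$ the term $u_2\tilde f_0$ is vacuous (as $\tilde f_0=0$), so one solves only for $t_2$ and may set $u_2=0$. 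Finally I would record that $\tilde f_n=f_n+(\text{lower order})$ is monic of degree $n$ and lies in $\operatorname{span}\qty{f_0,\dots,f_n}$, closing the induction.

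The only genuine subtlety, and the point I would emphasize, is the first observation of the second paragraph: it is precisely the bandwidth-two structure inherited from the three-term recurrence for the Legendre OPs (Corollary~\ref{corfnsnbn}, giving $f_n=p_n+b_{n-1}p_{n-1}+c_{n-1}p_{n-2}$) that makes a \emph{three}-term recurrence with only two adjustable coefficients sufficient to orthogonalize the whole family. Everything else is standard Gram--Schmidt bookkeeping, together with care at the degenerate index $n=2$ where $\tilde f_0=0$.
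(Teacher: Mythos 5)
Your proposal is correct and follows essentially the same route as the paper: use the banded structure of $\qty{f_n}$ (namely $\ip{f_n}{f_m}_S=\ip{f_n}{f_m}_2=0$ for $|n-m|\ge 3$) so that the two free coefficients $t_n,u_n$ need only enforce orthogonality against $\tilde f_{n-1}$ and $\tilde f_{n-2}$, which decouple since those two are already orthogonal. The only difference is presentational: the paper computes $t_2,t_3,u_3$ explicitly and then sketches the induction, whereas you give a cleaner unified inductive argument that makes explicit the span-containment $\tilde f_j\in\operatorname{span}\qty{f_0,\dots,f_j}$ underlying the paper's terse closing step.
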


\begin{proof}
For $n=2$, we only need to find $t_2\qty(\chi)$ such that $$\ip{\tilde{f}_2\qty(\cdot, \chi)}{\tilde{f}_1\qty(\cdot, \chi)}_S= \ip{f_2+ t_2\qty(\chi)f_1}{f_1}_S= \ip{f_2}{f_1}_2+t_2\qty(\chi)\|f_1\|_S^2=0.$$ Note that $t_2\qty(\chi)\neq 0$ since we can check that $\ip{f_2}{f_1}_2\neq 0$. Using Theorems\cite[Theorem 3.2]{OST} and \cite[Theorem 3.4]{OST} we see that
$$\ip{f_2}{f_1}_2= \ip{p_2+b_1p_1+c_1p_0}{p_1+b_0p_0}_2=b_1d_1^{-2}+c_1b_0d_0^{-2}=b_1d_1^{-2}+b_0d_1^{-2}=d_1^{-2}\qty(b_0+b_1)<0.$$ In fact, $t_2\qty(\chi)=-\tfrac{d_1^{-2}\qty(b_0+b_1)}{d_1^{-2}+b_0^2d_0^{-2}+\chi d_0^{-2}}>0.$

For $n=3$, we must find $t_3\qty(\chi), u_3\qty(\chi)$ such that $\ip{\tilde{f}_3\qty(\cdot, \chi)}{\tilde{f}_2\qty(\cdot, \chi)}_S=\ip{\tilde{f}_3\qty(\cdot, \chi)}{\tilde{f}_1\qty(\cdot, \chi)}_S=0.$ To see that this is always possible, we proceed as follows.
$$ \ip{\tilde{f}_3\qty(\cdot, \chi)}{\tilde{f}_1\qty(\cdot, \chi)}_S=\ip{f_3+ t_3\qty(\chi)\tilde{f}_2\qty(\cdot, \chi) + u_3\qty(\chi)\tilde{f}_1\qty(\cdot, \chi)}{\tilde{f}_1\qty(\cdot, \chi)}_S= \ip{f_3}{f_1}_S+   u_3\qty(\chi)\ip{f_1}{f_1}_S$$
We note that $\ip{f_3}{f_1}_S=\ip{f_3}{f_1}_2=c_2d_1^{-2}=d_2^{-2}$ from which we get $u_3\qty(\chi)=-\tfrac{d_{2}^{-2}}{\|f_1\|_2^2+\chi d_0^{-2}}< 0$.

Similarly, 
$$ \ip{\tilde{f}_3\qty(\cdot, \chi)}{\tilde{f}_2\qty(\cdot, \chi)}_S=\ip{f_3}{\tilde{f}_2\qty(\cdot, \chi)}_S+t_3\qty(\chi)\|\tilde{f}_2\qty(\cdot, \chi)\|_S^2=0.$$
However, $\ip{f_3}{\tilde{f}_2}_S=\ip{f_3}{f_2}_S=\ip{f_3}{f_2}_2=b_2d_2^{-2}+b_1d_1^{-2}.$ Hence,
$t_3\qty(\chi)=-\tfrac{b_2d_2^{-2}+b_1d_1^{-2}}{\|\tilde{f}_2\qty(\cdot, \chi)\|_S^2}.$

The rest of the proof proceed by induction. By construction $\ip{\tilde{f}_{n}}{\tilde{f}_{n+k}}_S=0$ for any $n\geq 0$ and $k=1,2,3.$ For any $n, m$ such that $|n-m|>3$ we see that $\ip{\tilde{f}_n}{\tilde{f}_m}_S=0$ from the fact that $\ip{f_n}{f_m}_S=0$ for all such indices. 
\end{proof}

\subsection{Sobolev Orthogonal Polynomials with respect to \texorpdfstring{$k=1$}{k=1}}\label{sec: SOP wrt k=1}
In this section, we consider the Sobolev inner product 
$$\ip{f}{g}_S=\int_{SG}f\qty(x)g\qty(x)\, \dd{\mu}\qty(x)+\chi \int_{SG} \Delta f\qty(x)\Delta g\qty(x)\, \dd{\mu}\qty(x)$$ defined earlier in Section~\ref{sec:sopsg}, and study the corresponding Sobolev OPs constructed  from the family of monomials with $k=1$. In this case, we will show that the resulting Sobolev OPs satisfy a four-term recurrence relation instead of a three-term. This will lead to  slight differences in the estimates and properties of these polynomials.
We will abuse the notations and still denote by $\qty{s_{n}}_{n=0}^{\infty}$, the Sobolev OPs for the family $k=1$.

We first state and prove a version of Lemma~\ref{Lemma: Family 2 and 3 is self contained on Green's, Family 1 isn't} that holds only under some restrictions.

\begin{lemma}\label{Lemma: Family 1 is not self contained}
Fix $k=1$. Let $C$ be a polynomial in $\operatorname{span}\qty{P_{n,1}}_{n=0}^{\infty}$ with $\operatorname{deg}\qty(C) = J$. Define the function $f$ on $SG$ by $f\qty(x):= -\int_{SG} G\qty(x,y)C\qty(y)dy$. Then, $f$ is also in $\operatorname{span}\qty{P_{n,1}}_{n=0}^{\infty}$ with $\operatorname{deg}\qty(f) = J+1$ if and only if $\partial_n f\qty(q_0)=0$.
\end{lemma}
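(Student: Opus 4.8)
The plan is to adapt the proof given for the $k=2,3$ families, isolating the single place where that argument breaks down. Since $\Delta P_{j+1,1}=P_{j,1}$ for every $j$, and since $f(x)=-\int_{SG}G(x,y)C(y)\,\dd{\mu}(y)$ is exactly the solution of $\Delta f = C$ with $f|_{V_0}=0$, I would begin by writing $C=\sum_{j=0}^{J}c_j P_{j,1}$ (with $c_J\neq 0$) and setting $g:=\sum_{j=0}^{J}c_j P_{j+1,1}$. Then $g$ is a degree-$(J+1)$ polynomial lying in $\operatorname{span}\qty{P_{n,1}}$ with $\Delta g = C$, so $h:=f-g$ is harmonic. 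The entire question thus reduces to locating $h$ inside the three-dimensional space of harmonic functions.

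Next I would expand $h=a\,P_{0,1}+b\,P_{0,2}+c\,P_{0,3}$ in the basis of degree-zero monomials, where $P_{0,1}\equiv 1$. Exactly as in the $k=2,3$ case, symmetry handles one coefficient: $C$ is symmetric across the axis through $q_0$, the Green kernel $G$ is symmetric under the reflection fixing $q_0$, and $g$ is symmetric, so the antisymmetric part vanishes, giving $c=0$. The essential difference from $k=2,3$ is that now \emph{no} symmetry argument removes the $P_{0,2}$ term, because $P_{0,2}$ is itself symmetric; this surviving term is precisely the obstruction the lemma quantifies. Since $P_{0,1}$ already belongs to $\operatorname{span}\qty{P_{n,1}}$ while $P_{0,2}$ does not, and since $P_{0,1},P_{0,2},P_{0,3}$ are linearly independent, $f\in\operatorname{span}\qty{P_{n,1}}$ if and only if $b=0$.

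The decisive step is to identify $b$ with $\partial_n f(q_0)$. Taking normal derivatives at $q_0$ in $f=g+a\,P_{0,1}+b\,P_{0,2}$ and invoking the defining identities of the monomials, I would use $\partial_n P_{j,1}(q_0)=0$ for every $j$ (hence $\partial_n g(q_0)=0$ and $\partial_n P_{0,1}(q_0)=0$), together with $\partial_n P_{0,3}(q_0)=0$ and $\partial_n P_{0,2}(q_0)=1$. These collapse the expression to $\partial_n f(q_0)=b$. Combined with the previous paragraph, this yields the equivalence: $f\in\operatorname{span}\qty{P_{n,1}}$ if and only if $\partial_n f(q_0)=0$; and when this holds, $f=g+a\,P_{0,1}$ retains the leading term $c_J P_{J+1,1}$, so $\deg(f)=J+1$.

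I expect the main obstacle to be conceptual rather than computational: recognizing that, unlike for $k=2,3$, the $P_{0,2}$ component is genuinely present and cannot be annihilated by symmetry, and then pinning it down through the normal derivative at $q_0$ rather than through boundary values (which only control the $P_{0,1}$ coefficient). A secondary point requiring care is confirming that $\qty{P_{0,1},P_{0,2},P_{0,3}}$ is an honest basis of the harmonic functions, so that the condition $b=0$ is a faithful characterization of membership in $\operatorname{span}\qty{P_{n,1}}$.
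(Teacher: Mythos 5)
Your proof is correct and follows essentially the same route as the paper's: both decompose $f$ into a degree-$(J{+}1)$ polynomial in the $k=1$ family plus a harmonic correction, kill the $P_{0,3}$ component by symmetry, and identify the surviving $P_{0,2}$ coefficient as exactly $\partial_n f(q_0)$ via $\partial_n P_{j,1}(q_0)=0$. The only cosmetic difference is that the paper phrases the harmonic correction as the harmonic functions $H_j$ matching the boundary values of $P_{j+1,1}$, while you introduce it as $h=f-g$; these are the same decomposition.
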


\begin{proof} The proof is the same as that of Lemma~\ref{Lemma: Family 2 and 3 is self contained on Green's, Family 1 isn't} except that when $k=1$, by symmetry, coefficient of $P_{0,3}$ is $0$ but $f$ may include a term from $P_{0,2}$ which can be only eliminated when $\partial_n f\qty(q_0) = 0$. Conversely, if the coefficient on $P_{0,2}$ was 0, then $\partial_n f\qty(q_0) = 0$ because $\partial_n P_{i,1}\qty(q_0)=0$ for any $i$. 
\end{proof}

Results similar to the ones for $k=2,3$ which were proved in Section~\ref{sec3.2} are also valid in some sense for $k=1$. However, we could establish these results only if we assume that the following conjecture is true. The statement uses the  the notations in Lemma~\ref{Lemma: Expansions for f}.
\begin{conjecture}\label{conjecture:normal} 
For any integer $t\geq 0$, let $$f_{t+1,1} := -\int_{SG}G\qty(x,y)p_{t,1}\qty(y)dy.$$  
We have
\begin{align}
    \partial_n f_{t+1,1}\qty(q_0)\neq 0. \label{equation: normal derivative conjecture}
\end{align}
\end{conjecture}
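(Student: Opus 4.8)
The plan is first to translate the conjecture into a statement about a single scalar and then to attack that scalar directly. Combining the third and fourth identities of Lemma~\ref{Lemma: Expansions for f} (the two $k=1$ cases), for $t\ge 2$ one has
$$\partial_n f_{t,1}\qty(q_0)=-2\,\partial_n f_{t,1}\qty(q_1)=2\int_{SG}p_{t-1,1}\qty(x)P_{0,2}\qty(x)\,\dd{\mu}\qty(x),$$
so, writing $t=j+1$, Conjecture~\ref{conjecture:normal} is equivalent to
$$S_j:=\int_{SG}p_{j,1}\qty(x)P_{0,2}\qty(x)\,\dd{\mu}\qty(x)\neq 0,\qquad j\ge 0.$$
The case $j=0$ is immediate, since $p_{0,1}\equiv 1$ and a direct computation gives $S_0=\tfrac16$. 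Moreover, Lemma~\ref{Lemma: Expansions for zeta} shows $\partial_n f_{j+1,1}\qty(q_0)=\zeta_{j,1}=2\sum_{l=0}^{j}\omega_{j,l}\alpha_{l+1}$, because $\partial_n P_{0,2}\qty(q_0)=1$ while $\partial_n P_{l+1,1}\qty(q_0)=0$ for every $l$. Hence the conjecture is also equivalent to the purely algebraic statement $\sum_{l=0}^j\omega_{j,l}\alpha_{l+1}\neq 0$, where the $\omega_{j,l}$ are the monic Gram--Schmidt coefficients of $p_{j,1}$ and the $\alpha_{l+1}=P_{l+1,1}\qty(q_1)$ are governed by Lemma~\ref{lem:recabg}.

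With this reduction in hand, I would pursue two complementary lines. The first is a \emph{modified-moment determinant} argument: expressing $p_{j,1}$ through the Gram matrix $M_{mn}=\ip{P_{m,1}}{P_{n,1}}_2$ of the monomials realizes $S_j$ as $\det\widetilde M^{(j)}/\det M^{(j-1)}$, where $\widetilde M^{(j)}$ is obtained from the $(j+1)\times(j+1)$ matrix $M^{(j)}$ by replacing its last row with the mixed moments $\nu_n=\ip{P_{n,1}}{P_{0,2}}_2$. Since $\det M^{(j-1)}>0$ (the monomials are linearly independent), nonvanishing of $S_j$ is equivalent to $\det\widetilde M^{(j)}\neq 0$, and I would try to establish this for all $j$ by proving a strict sign-regularity (total-positivity) property of the combined moment array $\qty(M,\nu)$, in analogy with the classical proof that Hankel moment matrices are positive definite. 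The second line is \emph{asymptotics plus a finite check}: using the decay estimates for $\|p_{j,1}\|_2$ from \cite{OST} together with the scaling laws for the monomials under $F_0$, one would extract the leading-order behaviour of $S_j$ as $j\to\infty$, show it is eventually of one strict sign, and reduce the conjecture to verifying finitely many $S_j$ with the recursions of Lemma~\ref{lem:recabg} and the Gram--Schmidt recursion of \cite{OST}. If the positivity conjecture~\eqref{conjecture} is assumed, then all $\alpha_{l+1}>0$, which may make the sign of the leading asymptotic term tractable.

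The hard part will be that none of the structural tools that made the $k=2,3$ analysis work are available here. Precisely because $f_{j+1,1}$ leaves the $k=1$ family, picking up exactly the term $\zeta_{j,1}P_{0,2}$ whose nonvanishing we must prove (cf.\ Lemma~\ref{Lemma: Family 1 is not self contained}), there is no three-term recurrence relating consecutive $s_n$ or $f_n$, so a clean induction on $j$ is blocked: the quantity to be bounded is the very coefficient that breaks the recurrence. Compounding this, the Gram--Schmidt coefficients $\omega_{j,l}$ alternate in sign, so $S_j$ is a signed sum and the positivity of the $\alpha_{l+1}$ does not transfer to it. I therefore expect the crux to be proving the strict sign-regularity of the bordered moment determinants $\widetilde M^{(j)}$, equivalently controlling the cancellation in $\sum_l\omega_{j,l}\alpha_{l+1}$; this is presumably the obstruction that led the authors to leave the statement as a conjecture.
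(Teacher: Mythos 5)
The statement you were asked to prove is Conjecture~\ref{conjecture:normal}: the paper explicitly leaves it open (``we have not been able to establish the conjecture'') and offers only numerical evidence, so there is no proof in the paper to compare against. Your submission is, as you yourself say, a reduction plus a research plan rather than a proof, and that self-assessment is accurate: neither the sign-regularity argument for the bordered moment determinants $\widetilde M^{(j)}$ nor the asymptotics-plus-finite-check strategy is actually carried out, and the step you correctly isolate as the crux --- controlling the cancellation in the signed sum $\sum_{l=0}^{j}\omega_{j,l}\alpha_{l+1}$ --- is precisely the open problem. So the gap is the entire argument; what you have produced is a (useful and essentially correct) reformulation, not a resolution.

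That reformulation is consistent with what the paper does establish: Lemma~\ref{Lemma: Expansions for zeta} gives $\partial_n f_{j+1,1}(q_0)=\zeta_{j,1}=2\sum_{l=0}^{j}\omega_{j,l}\alpha_{l+1}$ for all $j\ge 0$ (since $\partial_nP_{0,2}(q_0)=1$ and $\partial_nP_{l+1,1}(q_0)=0$), and Lemma~\ref{Lemma: Family 1 is not self contained} already records the equivalence of the conjecture with $f_{j+1,1}$ leaving the $k=1$ span. Two corrections, however. First, Lemma~\ref{Lemma: Expansions for f} is stated only for $t\ge 2$, and its proof genuinely needs $\langle p_{t-1,1},1\rangle_2=\langle p_{t-1,1},p_{0,2}\rangle_2=0$, which fails at $t=1$; hence your identity $\partial_nf_{j+1,1}(q_0)=2\int_{SG} p_{j,1}P_{0,2}\,d\mu$ is valid only for $j\ge1$, and the two ``equivalent'' scalars you introduce are not equal at $j=0$. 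Second, the value $S_0=\tfrac16$ is wrong: since $P_{0,2}$ is harmonic with boundary values $0,-\tfrac12,-\tfrac12$, one has $\int_{SG}P_{0,2}\,d\mu=-\tfrac13$; the number $\tfrac16$ you obtained is $\alpha_1=\tfrac12\zeta_{0,1}=\tfrac12\,\partial_nf_{1,1}(q_0)$, which is the other scalar. Neither slip affects the nonvanishing conclusion at $j=0$ (the $\zeta$-formula settles that case directly), but they illustrate that the boundary case must be routed through Lemma~\ref{Lemma: Expansions for zeta} rather than the Gauss--Green identities. For $j\ge1$ your determinant formula $S_j=\det\widetilde M^{(j)}/\det M^{(j-1)}$ with $\det M^{(j-1)}>0$ is a legitimate restatement, but proving $\det\widetilde M^{(j)}\neq0$ for all $j$ remains exactly as hard as the conjecture itself.
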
 

In light of Lemma~\ref{Lemma: Family 1 is not self contained}, Conjecture~\ref{conjecture:normal} is equivalent to the fact that $f_{t+1,1}$ does not belong to the $k=1$ family. This is in sharp contrast to the situations for $k=2$ or $k=3$. While, we have not been able to establish the conjecture,  we do have strong numerical evidences that it is true, and for the rest of this section we shall assume so. 

The first result gives some norm estimates for $s_n$ which should be compared to Theorem~\ref{thm:normestimate}

\begin{proposition}\label{k1 L2 and S2 estimates} For $k=1$, the Sobolev OPs satisfy the following additional estimates for $n\geq 1$.
\begin{equation}\label{SobolevH1estimatesk=1}
\begin{cases}
\|p_n\|_{2}^2+\chi\|p_{n-1}\|_{2}^2\leq \|s_n\|^2_S\leq 2\|G\|_{2}^2\|p_{n-1}\|^2_{2}+\partial_nf_n\qty(q_0)^2+\chi\|p_{n-1}\|_{2}^2,\\
\|p_n\|_2\leq \|s_{n}\|_{2}\leq \|G\|_{2}\|p_{n-1}\|_{2}+\qty|\partial_nf_n\qty(q_0)|.
\end{cases}
\end{equation}
\end{proposition}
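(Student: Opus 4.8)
The two lower bounds are the easy part and follow from the extremal (monic-minimization) property of the Legendre OPs. Both $s_n$ and $p_n$ are monic of degree $n$ in the $k=1$ family, and since $p_n$ minimizes the $L^2$-norm among all monic degree-$n$ polynomials, I get $\|p_n\|_2\le\|s_n\|_2$, which is the left inequality on the second line. Applying $\Delta$ to the expansion $s_n=P_{n,1}+\sum_{\ell<n}z_{\ell,n}P_{\ell,1}$ and using $\Delta P_{j,1}=P_{j-1,1}$ shows that $\Delta s_n$ is again monic of degree $n-1$; hence $\|\Delta s_n\|_2\ge\|p_{n-1}\|_2$ by the same minimization. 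Combining these with $\|s_n\|_S^2=\|s_n\|_2^2+\chi\|\Delta s_n\|_2^2$ immediately yields $\|s_n\|_S^2\ge\|p_n\|_2^2+\chi\|p_{n-1}\|_2^2$.

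The crux is the upper bound, where the obstruction from Lemma~\ref{Lemma: Family 1 is not self contained} must be handled. The natural comparison polynomial $f_n=-\int_{SG}G(\cdot,y)p_{n-1}(y)\,d\mu(y)$ is no longer a $k=1$ polynomial: by Lemma~\ref{Lemma: Expansions for zeta} it has the form $f_n=\zeta_{n-1,1}P_{0,2}+\sum_{l=0}^{n-1}\omega_{n-1,l}P_{l+1,1}$. I therefore set $g_n:=f_n-\partial_nf_n(q_0)\,P_{0,2}$. Using $\partial_nP_{0,2}(q_0)=1$ and $\partial_nP_{l+1,1}(q_0)=0$ from the monomial characterization, one checks that $\partial_nf_n(q_0)=\zeta_{n-1,1}$, so $g_n=\sum_{l=0}^{n-1}\omega_{n-1,l}P_{l+1,1}$ is a \emph{monic} degree-$n$ polynomial lying in the $k=1$ family. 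Since $s_n$ minimizes $\|\cdot\|_S$ among all monic degree-$n$ polynomials in this family (write any such polynomial as $s_n$ plus an $S$-orthogonal remainder in $\operatorname{span}\{s_0,\dots,s_{n-1}\}$), I obtain $\|s_n\|_S\le\|g_n\|_S$. Because $\Delta P_{0,2}=0$, we have $\Delta g_n=\Delta f_n=p_{n-1}$, whence $\|g_n\|_S^2=\|g_n\|_2^2+\chi\|p_{n-1}\|_2^2$.

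It remains to estimate $\|g_n\|_2$. From $\|s_n\|_S\le\|g_n\|_S$ together with $\|\Delta s_n\|_2\ge\|p_{n-1}\|_2=\|\Delta g_n\|_2$ I deduce $\|s_n\|_2\le\|g_n\|_2$. The triangle inequality gives $\|g_n\|_2\le\|f_n\|_2+|\partial_nf_n(q_0)|\,\|P_{0,2}\|_2$, and then $\|f_n\|_2\le\|G\|_2\|p_{n-1}\|_2$ (from \cite[Theorem 3.4]{OST}) together with $\|P_{0,2}\|_2\le\tfrac12$ (the harmonic function $P_{0,2}$ has boundary values $(0,-\tfrac12,-\tfrac12)$, so $\|P_{0,2}\|_\infty\le\tfrac12$ and $\mu$ is a probability measure) yields the $L^2$ bound $\|s_n\|_2\le\|G\|_2\|p_{n-1}\|_2+|\partial_nf_n(q_0)|$. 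For the $S$-norm, applying $(a+b)^2\le 2a^2+2b^2$ to the same triangle inequality gives $\|g_n\|_2^2\le 2\|G\|_2^2\|p_{n-1}\|_2^2+2\cdot\tfrac14\,\partial_nf_n(q_0)^2$, and adding $\chi\|p_{n-1}\|_2^2$ produces the claimed estimate. The genuine difficulty here is conceptual rather than computational: one must recognize that the right object to compare against $s_n$ is not $f_n$ itself but its corrected version $g_n$ living in the $k=1$ family, and verify that the discarded coefficient is precisely $\partial_nf_n(q_0)$. Once this identification is made, every remaining inequality reduces to monic minimization, the Green-operator bound, and the triangle inequality.
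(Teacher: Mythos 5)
Your proof is correct and follows essentially the same route as the paper: the paper's proof also introduces the corrected polynomial $g=f_n-\partial_nf_n\qty(q_0)P_{0,2}$, notes that it lies in the $k=1$ family because its normal derivative at $q_0$ vanishes, and runs the identical chain $\|g\|_S^2\ge\|s_n\|_S^2\ge\|s_n\|_2^2+\chi\|p_{n-1}\|_2^2\ge\|p_n\|_2^2+\chi\|p_{n-1}\|_2^2$ together with the triangle inequality and $\|f_n\|_2\le\|G\|_2\|p_{n-1}\|_2$. Your write-up is if anything slightly more explicit (verifying $\partial_nf_n\qty(q_0)=\zeta_{n-1,1}$ and $\|P_{0,2}\|_2\le\tfrac12$), but there is no substantive difference in approach.
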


\begin{proof}
Let $g:=f_n-\partial_nf_n\qty(q_0)P_{0,2}$, then $\partial_n g\qty(q_0)=0$, hence by Lemma~\ref{Lemma: Family 2 and 3 is self contained on Green's, Family 1 isn't}, it is a polynomial spanned by $\qty{P_{0,1}}$. It follows that
\begin{align}
&2\|G\|_{2}^2\|p_{n-1}\|_{2}^2+\partial_nf_n\qty(q_0)^2+\chi\|p_{n-1}\|_{2}^2\\
&\ge\qty(\|f_n\|_{2}+\qty|\partial_nf_n\qty(q_0)|\,\|P_{0,2}\|_{2})^2+\chi\|p_{n-1}\|_{2}^2 \\
&\ge\|g\|_S^2\ge\|s_n\|^2_S\ge\|s_n\|_{2}^2+\chi\|p_{n-1}\|_{2}^2\ge\|p_n\|_{2}^2+\chi\|p_{n-1}\|_{2}^2    
\end{align}\end{proof}

The following is the analog of the three-term recursion formula in the context of Sobolev OPs starting from the monomials in the the $k=1$ family. Observe that it is different in nature, as the right hand side involves two terms. 

\begin{theorem}\label{Recurrence Relation with one assumption $(k=1)$} Let $\qty{s_n}$ be the monic Sobolev orthogonal polynomials and $\qty{p_n}$ the monic Legendre polynomials generated from the $k = 1$ family of monomials. Let $s_{-1}:=0$, $f_{n+2}\qty(x) = -\int_{SG}G\qty(x,y)p_{n+1}\qty(y)dy$ and suppose that $\partial_n f_{n+2}\qty(q_0)\neq 0$.  Then the following statements hold.

\begin{enumerate}
    \item[(1)] For each integer $n\geq-1$, the Sobolev OPs $\qty{s_n}$ satisfy the following recurrence relation:
    \begin{equation}
    s_{n+3}+a_ns_{n+2} + b_ns_{n+1}+c_ns_n = f_{n+3}+d_nf_{n+2},  \label{recurrencek=1}
    \end{equation}
    where the coefficients are given by
    \medskip
    \begin{equation} \label{a_n and b_n in k=1 recurrence}
    \begin{cases}
    a_n=\frac{\inner{f_{n+3}+d_nf_{n+2}}{s_{n+2}}_S}{\|s_{n+2}\|_S^2},\; \; 
    b_n= \frac{\inner{f_{n+3}+d_nf_{n+2}}{s_{n+1}}_S}{\|s_{n+1}\|_S^2},\\
    d_n=-\frac{\partial_n f_{n+3}\qty(q_0)}{\partial_nf_{n+2}\qty(q_0)},\; \; 
    c_n=-d_n\frac{\|p_{n+1}\|_{2}^2}{\|s_n\|_S^2}.
    \end{cases}
  \end{equation}
   \item[(2)] For each fixed $n\ge1$, 
 \begin{equation*}
     \begin{cases}
     |b_n|=O\qty(\chi^{-1}), \quad |c_n|=O\qty(\chi^{-1}),\\
     \lim_{\chi \to \infty} a_n=-d_n, \quad \lim_{\chi \to \infty}\chi c_n= -d_n\frac{\|p_{n+1}\|_{2}^2}{\|p_{n-1}\|_{2}^2}.
      \end{cases}
\end{equation*}

    

\end{enumerate}

\end{theorem}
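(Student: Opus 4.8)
The plan is to run the same projection argument as in Theorem~\ref{th:k23recurrence}, after first repairing the one feature that breaks for $k=1$: by Lemma~\ref{Lemma: Family 1 is not self contained} the Green potential $f_{m+1}$ leaves the $k=1$ family, picking up a spurious $P_{0,2}$ term whose coefficient, by Lemma~\ref{Lemma: Expansions for zeta}, is exactly $\zeta_{m,1}$. Since $\partial_n P_{l+1,1}\qty(q_0)=0$ and $\partial_n P_{0,2}\qty(q_0)=1$, this gives $\partial_n f_{m+1}\qty(q_0)=\zeta_{m,1}$. First I form $g:=f_{n+3}+d_n f_{n+2}$; the choice $d_n=-\partial_n f_{n+3}\qty(q_0)/\partial_n f_{n+2}\qty(q_0)=-\zeta_{n+2,1}/\zeta_{n+1,1}$ is precisely what forces $\partial_n g\qty(q_0)=0$, so by Lemma~\ref{Lemma: Family 1 is not self contained} $g$ lies in $\operatorname{span}\{P_{n,1}\}$ and is monic of degree $n+3$. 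Hence $g=\sum_{j=0}^{n+3}\lambda_j s_j$ with $\lambda_{n+3}=1$ and $\lambda_j=\langle g,s_j\rangle_S/\|s_j\|_S^2$, and the recurrence \eqref{recurrencek=1} will follow once I show the expansion truncates at $j=n$ and identify $\lambda_{n+2}=a_n$, $\lambda_{n+1}=b_n$, $\lambda_n=c_n$.

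The crux is to prove $\langle g,s_j\rangle_S=0$ for $j<n$; since $s_j\in\operatorname{span}\{p_0,\dots,p_j\}$ it suffices to show $\langle g,p_i\rangle_S=0$ for $i\le n-1$. Using $\Delta f_m=p_{m-1}$, the symmetry of $G$, and the $L^2$-orthogonality of the Legendre OPs to lower-degree polynomials exactly as in Theorem~\ref{th:k23recurrence}, every contribution cancels except the stray $P_{0,2}$ terms, leaving, for $i\le n-1$, $\langle g,p_i\rangle_S=\zeta_{i,1}\big(\langle p_{n+2},P_{0,2}\rangle_2+d_n\langle p_{n+1},P_{0,2}\rangle_2\big)$. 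Here I invoke the bridge identity $\langle p_m,P_{0,2}\rangle_2=\tfrac12\zeta_{m,1}$, which follows by combining the third and fourth relations of Lemma~\ref{Lemma: Expansions for f} with $\partial_n f_{m+1}\qty(q_0)=\zeta_{m,1}$. Substituting and using $\zeta_{n+2,1}+d_n\zeta_{n+1,1}=0$ kills the bracket, so the expansion truncates as claimed and $a_n,b_n$ are read off as the projections in \eqref{a_n and b_n in k=1 recurrence}. For $c_n$ the same reduction gives $\langle g,s_n\rangle_S=\langle g,p_n\rangle_S$; repeating the computation at $i=n$, the $\ell=n$ term now survives and contributes $\|p_{n+1}\|_2^2$, so after the identical $\zeta$-cancellation $\langle g,s_n\rangle_S$ equals a $\chi$-independent multiple of $d_n\|p_{n+1}\|_2^2$, which yields the closed form for $c_n$ in \eqref{a_n and b_n in k=1 recurrence}.

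For part (2) I would use that $f_{n+2},f_{n+3}$, and hence $d_n$, are independent of $\chi$, so all $\chi$-dependence enters through the denominators $\|s_j\|_S^2=\|s_j\|_2^2+\chi\|\Delta s_j\|_2^2$ and through the $s_j$ inside the numerators. Proposition~\ref{k1 L2 and S2 estimates} sandwiches $\|s_j\|_S^2$ between $\|p_j\|_2^2+\chi\|p_{j-1}\|_2^2$ and a bounded quantity plus $\chi\|p_{j-1}\|_2^2$, so $\|s_j\|_S^2=\chi\|p_{j-1}\|_2^2+O(1)$. The point is that in a numerator $\langle f_m,s_j\rangle_S$ the Laplacian term $\chi\langle p_{m-1},\Delta s_j\rangle_2$ survives only when $\deg\Delta s_j=j-1\ge m-1$, which among the relevant pairings happens only for $\langle f_{n+2},s_{n+2}\rangle_S$, where it contributes a term of order $\chi\|p_{n+1}\|_2^2$. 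Hence $\langle g,s_{n+1}\rangle_S=O(1)$ and $\langle g,s_n\rangle_S=O(1)$ (the latter in fact $\chi$-independent), while $\langle g,s_{n+2}\rangle_S$ is of order $\chi\|p_{n+1}\|_2^2$. Dividing by denominators of order $\chi$ then gives $|b_n|=O\qty(\chi^{-1})$ and $|c_n|=O\qty(\chi^{-1})$; the balance of $\langle g,s_{n+2}\rangle_S$ against $\|s_{n+2}\|_S^2=\chi\|p_{n+1}\|_2^2+O(1)$ produces $\lim_{\chi\to\infty}a_n=-d_n$; and since $\langle g,s_n\rangle_S$ is $\chi$-independent, $\chi c_n=\chi\langle g,s_n\rangle_S/\|s_n\|_S^2\to -d_n\|p_{n+1}\|_2^2/\|p_{n-1}\|_2^2$, using $\|s_n\|_S^2=\chi\|p_{n-1}\|_2^2+O(1)$.

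The main obstacle is the orthogonality step. Unlike the $k=2,3$ case, the naive projections $\langle f_{n+3},p_i\rangle_S$ do not vanish, and the entire four-term structure rests on the exact cancellation of the $P_{0,2}$ contributions. Making this precise requires the identity $\langle p_m,P_{0,2}\rangle_2=\tfrac12\zeta_{m,1}$ tied to the definition of $d_n$, and it is exactly here that the hypothesis $\partial_n f_{n+2}\qty(q_0)\neq0$ (equivalently Conjecture~\ref{conjecture:normal}) is indispensable, for it is what makes $d_n$ well-defined and guarantees that the correction $g$ returns to the $k=1$ family. A secondary point to keep honest is the sign bookkeeping in the coefficient $c_n$ and in the two limits of part (2), all of which must be traced through the bridge identity and the definition of $d_n$ simultaneously.
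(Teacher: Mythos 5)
Your proposal arrives at the correct recurrence by the same overall strategy (correct $f_{n+3}$ by $d_nf_{n+2}$ so as to return to the $k=1$ family, then expand in the $\qty{s_j}$ and truncate), but the crucial truncation step is carried out by a genuinely different argument than the paper's. The paper observes that once $\partial_n\qty(f_{n+3}+d_nf_{n+2})\qty(q_0)=0$, the identity $\partial_nf_t\qty(q_0)+2\partial_nf_t\qty(q_1)=0$ from Lemma~\ref{Lemma: Expansions for f} forces \emph{all three} normal derivatives of $g:=f_{n+3}+d_nf_{n+2}$ to vanish; it then chooses an in-family antiderivative $h$ with $\lap h=s_t$ and applies Gauss--Green, reducing everything to $\int h\qty(p_{n+2}+d_np_{n+1})\dd{\mu}$ and plain $L^2$-Legendre orthogonality. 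You instead project onto the $p_i$ directly, use the symmetry of $G$ to write $\int f_{m+1}p_i=\int p_mf_{i+1}$, and track the stray $P_{0,2}$ component of $f_{i+1}$ explicitly, killing it via the bridge identity $\ip{p_m}{P_{0,2}}_2=\tfrac12\zeta_{m,1}$ together with $\zeta_{n+2,1}+d_n\zeta_{n+1,1}=0$. Both routes hinge on the same choice of $d_n$; the paper's is shorter and needs no bridge identity, while yours exhibits exactly which terms survive and why no other correction of $f_{n+3}$ would work. Your treatment of part (2) matches the paper's.

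Two points to pin down. First, your bridge identity is derived from Lemma~\ref{Lemma: Expansions for f}, which is stated only for $t\ge2$, i.e.\ $m\ge1$ (and indeed it fails at $m=0$, where $\ip{p_0}{P_{0,2}}_2=\int P_{0,2}\dd{\mu}=-\tfrac13\ne\tfrac12\zeta_{0,1}$); since you only ever apply it to $p_{n+1}$ and $p_{n+2}$ in the nonvacuous cases, nothing breaks, but this should be said. Second, you leave $\inner{g}{s_n}_S$ as ``a $\chi$-independent multiple of $d_n\|p_{n+1}\|_2^2$'': carried to the end, your computation gives $\inner{g}{s_n}_S=+d_n\|p_{n+1}\|_2^2$ and likewise $\inner{g}{s_{n+2}}_S=O\qty(1)+\chi d_n\|p_{n+1}\|_2^2$, hence $c_n=+d_n\|p_{n+1}\|_2^2/\|s_n\|_S^2$ and $\lim_{\chi\to\infty}a_n=+d_n$, which is the \emph{opposite} sign from the displayed formulas. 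The discrepancy traces to a minus sign the paper inserts when applying Gauss--Green (the same computation done in the $k=2,3$ setting, where the paper proves $\tilde b_n>0$, is consistent with your sign). You should commit to a sign, verify it against the $k=2,3$ analogue, and note the disagreement with the stated formula explicitly rather than leaving it open.
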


\begin{proof} 
Because of the assumption that $\partial_n f_{n+2}\qty(q_0)\neq 0$, we need to choose $d_n$ such that $\partial_n f_{n+3}\qty(q_0)+ d_n\partial_n f_{n+2}\qty(q_0)= 0$ to ensure that the polynomial $f_{n+3}+d_nf_{n+2}$ remains in the $k=1$ family according to Lemma~\ref{Lemma: Family 1 is not self contained}. Thus we see that $d_n=-\frac{\partial_n f_{n+3}\qty(q_0)}{\partial_nf_{n+2}\qty(q_0)}.$

It follows that $f_{n+3}+d_nf_{n+2}$ can be written as a finite linear combination of monomials in  $\qty{P_{n,1}}_{n=0}^{\infty}$, and it vanishes on the boundary and has zero normal derivatives. For any $t<n$, let $g$
 be a polynomial in the finite span of $\qty{P_{n,1}}_{n=0}^{\infty}$ such that $\lap g=s_t$. Then $\inner
{f_{n+3}+d_nf_{n+2}}{s_t}_S=\int_{SG}\qty(f_{n+3}+d_nf_{n+2})\lap g\, \dd{\mu}=-\int_{SG} \qty(p_{n+2}+d_np_{n+1})g\,
\dd{\mu}=0$. 

Next, take $h$ to be a monic polynomial in the finite span of $\qty{P_{n,1}}_{n=0}^{\infty}$ such that $\lap
h=s_{n}$. Then $$ c_n\|s_n\|_S^2=\inner{f_{n+3}+d_nf_{n+2}}{s_{n}}_S=-\int
\qty(p_{n+2}+d_np_{n+1}) h\, \dd{\mu}= -d_n \|p_{n+1}\|_{2}^2.$$  
The expressions for $a_n$ and $b_n$ are trivially derived. 

As for the estimates, one sees that $a_n=\frac{\int\qty(f_{n+3}+d_n f_{n+2})s_{n+2}\,\dd{\mu}}{\|s_{n+2}\|^2_S}-d_n\chi\frac{\int p^2_{n+1}\,\dd{\mu}}{\|s_{n+2}\|^2_S}$, the first term is $O\qty(\chi^{-1})$, while the second term converges to $-d_n$ as $\chi$ goes to $\infty$ by Proposition \ref{k1 L2 and S2 estimates}. 

The other
arguments are just the same as in the proof of Theorem \ref{th:k23recurrence}.
\end{proof}


The next result is an analog of Corollary~\ref{Corollary: Convergence result for lambda} in the case of $k=1$.

\begin{corollary} Assume $k=1$ and Conjecture \ref{conjecture:normal} is true. Then there exists a sequence of monic polynomials $\qty{g_n}_{n=0}^{\infty}$ independent of $\chi$ such that for any $n\ge0$, $deg\, g_n=n$, $\lim_{\chi \to \infty}s_n(x; \chi)=g_n(x)$ where the convergence is uniform in $x$ and $n$.   Furthermore, $g_{n+3}+d_ng_{n+2}=f_{n+3}+d_nf_{n+2}$ for any $n\ge 1$, where $d_n$ is given in Theorem~\ref{Recurrence Relation with one assumption $(k=1)$}. For the basic cases, $g_0=p_0$, $g_1=p_1$, $g_{2}+d_{-1}g_{1}=f_{2}+d_{-1}f_{1}-\frac{\inner{f_2+d_{-1}f_1}{g_0}_{2}}{{\|g_0\|_{2}^2}}g_{0}$, and 
$g_{3}+d_{0}g_{2}=f_{3}+d_{0}f_{2}-\frac{\inner{f_3+d_{0}f_2}{g_0}_{2}}{{\|g_0\|_{2}^2}}g_{0}$. Moreover, for any $\alpha<1$, $n\ge0$, $\lim\limits_{\chi \rightarrow\infty}\chi^\alpha\qty(s_n\qty(\chi)-g_n)=0$ uniformly in $x$.
\end{corollary}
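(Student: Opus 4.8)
The plan is to adapt the proof of Corollary~\ref{Corollary: Convergence result for lambda} to the four-term recurrence \eqref{recurrencek=1}, the one genuinely new feature being that the coefficient $a_n$ now tends to a \emph{nonzero} limit rather than decaying like $\chi^{-1}$. First I would identify the limits $g_n$. Since $\Delta s_0=0$, the Sobolev and $L^2$ pairings against $s_0$ coincide, so $s_0=p_0$ and $s_1=p_1$ are already independent of $\chi$; put $g_0=p_0$, $g_1=p_1$. For $n\ge1$ the estimate \eqref{SobolevH1estimatesk=1} gives $\|s_n\|_S^2=O(\chi)\to\infty$, which is exactly what forces $b_n=O(\chi^{-1})$ and $c_n=-d_n\|p_{n+1}\|_2^2/\|s_n\|_S^2=O(\chi^{-1})$ in part~(2) of Theorem~\ref{Recurrence Relation with one assumption $(k=1)$}; letting $\chi\to\infty$ in \eqref{recurrencek=1} then collapses it to the two-term relation $g_{n+3}+d_ng_{n+2}=f_{n+3}+d_nf_{n+2}$ of the statement, which, together with the base cases below, recursively defines each $g_n$ as a monic polynomial of degree $n$ independent of $\chi$. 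Applying $\Delta$ to this relation and checking the first terms shows moreover that $\Delta g_n=p_{n-1}$.

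The base cases $n=-1,0$ really are exceptional and produce the projection corrections, because the coefficients multiplying the \emph{harmonic} polynomial $s_0$ do not decay: $\|s_0\|_S^2=\|s_0\|_2^2$ is $\chi$-independent, so $b_{-1}$ and $c_0$ survive as $\chi\to\infty$. Taking the limit in \eqref{recurrencek=1} for $n=-1$ and $n=0$ then yields $g_2+d_{-1}g_1=f_2+d_{-1}f_1-\tfrac{\inner{f_2+d_{-1}f_1}{g_0}_2}{\|g_0\|_2^2}g_0$ and $g_3+d_0g_2=f_3+d_0f_2-\tfrac{\inner{f_3+d_0f_2}{g_0}_2}{\|g_0\|_2^2}g_0$, exactly as stated. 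I would record $g_0,\dots,g_3$ first and then run the convergence estimate inductively for $n\ge4$. Subtracting $g_{n+3}=f_{n+3}+d_nf_{n+2}-d_ng_{n+2}$ from the rearranged \eqref{recurrencek=1} gives
$$s_{n+3}-g_{n+3}=-a_n\qty(s_{n+2}-g_{n+2})-\qty(a_n-d_n)g_{n+2}-b_ns_{n+1}-c_ns_n.$$
Here $b_ns_{n+1}$ and $c_ns_n$ are $O(\chi^{-1})$ by the coefficient estimates of Theorem~\ref{Recurrence Relation with one assumption $(k=1)$} and the uniform bound $\|s_n\|_2\le\|G\|_2\|p_{n-1}\|_2+|\partial_nf_n(q_0)|$ of \eqref{SobolevH1estimatesk=1}, while $a_n$ is bounded, so the inductive hypothesis propagates the error through $-a_n(s_{n+2}-g_{n+2})$. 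The only genuinely new term is $(a_n-d_n)g_{n+2}$.

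The crux is therefore the rate at which $a_n-d_n\to0$. Expanding $a_n$ from \eqref{a_n and b_n in k=1 recurrence} and writing $\|s_{n+2}\|_S^2=\|s_{n+2}\|_2^2+\chi\|\Delta s_{n+2}\|_2^2$, one obtains
$$a_n-d_n=\frac{\inner{f_{n+3}+d_nf_{n+2}}{s_{n+2}}_2}{\|s_{n+2}\|_S^2}-d_n\,\frac{\|s_{n+2}\|_2^2+\chi\qty(\|\Delta s_{n+2}\|_2^2-\|p_{n+1}\|_2^2)}{\|s_{n+2}\|_S^2},$$
so the decay of $a_n-d_n$ is controlled by the speed of $\|\Delta s_{n+2}\|_2^2\to\|p_{n+1}\|_2^2=\|\Delta g_{n+2}\|_2^2$. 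This ties the $L^2$ error $s_{n+2}-g_{n+2}$ to its Laplacian, so the induction must track the $L^2$ and Laplacian (equivalently $S$-) errors \emph{simultaneously}. It is precisely this coupling, absent in the $k=2,3$ case where every recurrence coefficient decays, that I expect to be the main obstacle and that degrades the clean $O(\chi^{-1})$ rate down to $s_n(\cdot;\chi)-g_n=o(\chi^{-\alpha})$ for every $\alpha<1$ — exactly the stated conclusion. Uniformity in $n$ is inherited as in Corollary~\ref{Corollary: Convergence result for lambda}, since every bound is a fixed multiple of Legendre norms $\|p_{n-j}\|_2$, which decay faster than any exponential in $n$ (cf.\ Theorem~\ref{thm:normestimate} and Remark~\ref{rem:decayabn}); finally the uniform-in-$x$ ($L^\infty$) statement follows from the $L^2$ and Laplacian bounds through the embedding $\|u\|_\infty\le C(\|u\|_2+\|\Delta u\|_2)$ used in Corollary~\ref{Corollary: laplacian-L2 and Linfinity estimates}.
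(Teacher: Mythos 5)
Your proposal is correct and takes essentially the route the paper intends: the paper's own proof is the one-line remark that the argument of Corollary~\ref{Corollary: Convergence result for lambda} carries over, and you have supplied exactly the right adaptations — the non-decaying coefficient $a_n\to d_n$ collapsing the four-term recurrence to the two-term limit relation, the exceptional projections onto $g_0=s_0$ at $n=-1,0$ because $\|s_0\|_S=\|s_0\|_2$ is $\chi$-independent, and the simultaneous tracking of the $L^2$ and Laplacian errors. Two remarks: you implicitly (and correctly) use $\lim_{\chi\to\infty}a_n=+d_n$, which is what the corollary's statement and the computation $\langle p_{n+2}+d_np_{n+1},\Delta s_{n+2}\rangle_2=d_n\|p_{n+1}\|_2^2$ force, whereas Theorem~\ref{Recurrence Relation with one assumption $(k=1)$}(2) as printed has a sign discrepancy ($-d_n$); and the coupling you flag as the main obstacle does not actually degrade the rate for fixed $n$, since $\|\Delta(s_{n+2}-g_{n+2})\|_2^2=O(\chi^{-1})$ follows directly from \eqref{SobolevH1estimatesk=1} and the Laplacian error enters $a_n-d_n$ only squared, so $a_n-d_n=O(\chi^{-1})$ and your induction closes at the full rate $O(\chi^{-1})$, which is stronger than the stated $o(\chi^{-\alpha})$.
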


\begin{proof} The proof is similar to that of Corollary~\ref{Corollary: Convergence result for lambda}.


\end{proof}

\subsection{Orthogonal polynomials with respect to the higher order Sobolev-type  inner products}\label{subsec:higherordersop}
We now investigate Sobolev OPs with respect to  higher order Sobolev  inner products. Because most of the results are similar to the ones established in the last three sections, we shall omit most of the proofs.

\begin{definition} \label{def:innergeneral}
For any integer  $m\geq1$, the Sobolev-$m$ inner product $\inner{\cdot}{\cdot}_{S^m}$ is defined as
\begin{equation} \label{eq:sobk}
    \inner{f}{g}_{S^m} = \sum\limits_{\ell = 0}^m \chi_\ell\int_{SG}\lap^\ell f\lap^\ell g\,\dd{\mu}=\int_{SG} fg\, \dd{\mu} + \sum\limits_{\ell = 1}^m \chi_\ell \int_{SG}\lap^\ell f\lap^\ell g\,\dd{\mu},
    \end{equation}
 where $\chi_\ell$ are all non-negative constants, $\chi_0:=1$. 
\end{definition}

The next result collects formulas for computing some specific inner products of the monomials needed to represent the orthogonal polynomials as linear combinations of these monomials  \cite[Lemma 2.1]{OST}.
\begin{lemma}\label{lemma:innerproducthigher}
Suppose $m\in \N$, $\delta_0:=1$, $\chi_1,\dots, \chi_m\ge 0$  in~\eqref{eq:sobk}. 
 Then the following statements hold. 
 
 \begin{equation*}
 \begin{cases}
 \inner{P_{j,1}}{P_{k,1}}_{S^{m}} = 2\sum\limits_{r=0}^{m}\chi_{r}\sum\limits_{l=j-m_{*}}^{j}\qty(\alpha_{j-l-r}\eta_{k+l+1-r}-\alpha_{k+l+1-r}\eta_{j-l-r}),\\
 \inner{P_{j,2}}{P_{k,2}}_{S^{m}} = -2\sum\limits_{r=0}^{m}\chi_{r}\sum\limits_{l=j-m_{*}}^{j}\qty(\beta_{j-l-r}\alpha_{k+l+1-r}-\beta_{k+l+1-r}\alpha_{j-l-r}'),\\
 \inner{P_{j,3}}{P_{k,3}}_{S^{m}} = 18\sum\limits_{r=0}^{m}\chi_{r}\sum\limits_{l=j-m_{*}}^{j}\qty(\alpha_{j-l+1-r}\eta_{k+l+2-r}-\alpha_{k+l+2-r}\eta_{j-l+1-r}),\\
 \inner{P_{j,1}}{P_{k,2}}_{S^{m}} = -2\sum\limits_{r=0}^{m}\chi_{r}\sum\limits_{l=j-m_{*}}^{j}\qty(\alpha_{j-l-r}\alpha_{k+l+1-r}+\beta_{k+l+1-r}\eta_{j-l-r}),\\
    \inner{P_{j,1}}{P_{k,3}}_{S^{m}}=\inner{P_{j,2}}{P_{k,3}}_{S}=0,\\
    \inner{P_{j,3}^{\qty(n)}}{P_{k,3}^{\qty(n)}}_{S^{m}}=\inner{P_{j,3}^{\qty(0)}}{P_{k,3}^{\qty(0)}}_{S^{m}},\\
    \inner{P_{j,3}^{\qty(n)}}{P_{k,3}^{\qty(n')}}_{S^{m}}=-\frac{1}{2}\inner{P_{j,3}^{\qty(0)}}{P_{k,3}^{\qty(0)}}_{S^{m}},
 \end{cases}    
 \end{equation*}
 where $\alpha_i'=\frac12$ if $i=0$; otherwise $\alpha_i'=\alpha_i$. $\alpha_i=\beta_i=\eta_i=0$ if $i<0$. $m_*:=\min\qty{j, k}$.



\end{lemma}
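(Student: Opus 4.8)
\textbf{Proof plan for Lemma~\ref{lemma:innerproducthigher}.}
The plan is to reduce every one of these higher-order inner products to the single-Laplacian computations already carried out in \cite[Lemma 2.1]{OST} by exploiting the linearity of the inner product~\eqref{eq:sobk} in the index $\ell$. Writing $\inner{P_{j,k}}{P_{k',k}}_{S^m}=\sum_{r=0}^m \chi_r \int_{SG}\lap^r P_{j,k}\,\lap^r P_{k',k}\,\dd{\mu}$, I would first observe that $\lap^r P_{j,k}=P_{j-r,k}$ whenever $r\le j$ and $\lap^r P_{j,k}=0$ when $r>j$ (this is immediate from the defining identities $\Delta^n_\mu P^{(l)}_{j,k}(q_l)=\delta_{nj}\delta_{k\cdot}$ together with the fact that $P_{j,k}$ is $j$-harmonic). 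Hence each summand $\int_{SG}\lap^r P_{j,k}\,\lap^r P_{k',k}\,\dd{\mu}$ is exactly a plain $L^2$ inner product $\int_{SG}P_{j-r,k}P_{k'-r,k}\,\dd{\mu}$ of two lower monomials, which is precisely the quantity evaluated in \cite[Lemma 2.1]{OST}. So the whole statement follows once I substitute the closed-form $L^2$ expression from \cite{OST} into each term of the finite sum over $r$.

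The second step is purely bookkeeping of indices. The $L^2$ pairing formula from \cite{OST} expresses $\int_{SG}P_{a,k}P_{b,k}\,\dd\mu$ as a single sum of products of the sequences $\alpha,\beta,\eta$ with shifted indices; applying it with $a=j-r$ and $b=k'-r$ produces the inner sum $\sum_{l=j-m_*}^{j}(\dots)$ appearing in each displayed line, where the lower cutoff $j-m_*$ with $m_*=\min\{j,k'\}$ encodes exactly the range over which the summand is nonzero (recall the convention $\alpha_i=\beta_i=\eta_i=0$ for $i<0$, which automatically truncates the sum). I would verify that the shift $-r$ in the displayed subscripts matches the $r$-fold Laplacian drop, and that the overall constants $2,-2,18$ and the mixed-pair factors are inherited verbatim from the $m=1$ formulas in \cite{OST}, now carried through the extra summation over $r$ weighted by $\chi_r$. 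The vanishing statements $\inner{P_{j,1}}{P_{k,3}}_{S^m}=\inner{P_{j,2}}{P_{k,3}}_{S^m}=0$ follow at once from the symmetry/antisymmetry dichotomy: each $\lap^r$ preserves the symmetry type, so a symmetric monomial is paired against an antisymmetric one in the $L^2$ integral, which vanishes by the reflection invariance of $\mu$.

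The last two identities, relating the rotated copies $P^{(n)}_{j,3}$ to the base copy $P^{(0)}_{j,3}$, I would obtain from the relation $P^{(l_1)}_{j,k}=R\circ P^{(l_2)}_{j,k}$ recorded after the monomial characterization, together with the rotation invariance of both $\mu$ and $\lap$. Since $\lap^r$ commutes with the dihedral action and $\mu$ is $D_3$-invariant, $\int_{SG}\lap^r P^{(n)}_{j,3}\,\lap^r P^{(n)}_{k,3}\,\dd\mu$ is independent of $n$, giving the equality in the sixth line; the factor $-\tfrac12$ in the seventh line comes from pairing two distinct rotated copies and reduces to the standard computation $\inner{P^{(n)}_{j,3}}{P^{(n')}_{k,3}}=-\tfrac12\inner{P^{(0)}_{j,3}}{P^{(0)}_{k,3}}$ for $n\ne n'$, which again is inherited from \cite{OST} and is unchanged by inserting the Laplacian powers. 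I do not expect a genuine obstacle here; the only delicate point is the index accounting in step two --- in particular making sure the cutoff $j-m_*$ and the appearance of the primed symbol $\alpha'_i$ (which corrects the $i=0$ boundary value) are consistent across all four nonzero pairings --- so the bulk of the care goes into matching subscripts rather than into any new analytic input.
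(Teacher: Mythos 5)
Your proposal is correct and follows essentially the same route as the paper: the paper's proof consists precisely of the observation that $\lap^{r}P_{j,i}=P_{j-r,i}$, which reduces $\inner{P_{j,i}}{P_{k,i'}}_{S^{m}}$ to $\sum_{r=0}^{m}\chi_{r}\int_{SG}P_{j-r,i}P_{k-r,i'}\,\dd{\mu}$, followed by an appeal to \cite[Lemma 2.1]{OST} for each $L^2$ term. Your additional remarks on the symmetry-based vanishing and the $D_3$-invariance for the rotated copies are details the paper leaves implicit (inherited from the cited lemma), but they are consistent with its argument.
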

\begin{proof}
We use  Lemma 2.1 in \cite[Lemma 2.1]{OST} along with the following observation
\begin{align*}
    \inner{P_{j,i}}{P_{ki'}}_{S^{m}} &= \int_{\sg} P_{j,i}P_{ki'} d \mu  + \sum\limits_{r=1}^{m}\chi_{r}\int_{\sg} \lap^{r} P_{j,i}\lap^{r} P_{ki'} d \mu\\
    &= \int_{\sg} P_{j,i}P_{ki'} d \mu  +\sum\limits_{r=1}^{m} \chi_{r}\int_{\sg} P_{j-r, i} P_{\qty(k-r)i'} d \mu 
    = \sum\limits_{r=0}^{m} \chi_{r}\int_{\sg} P_{j-r, i} P_{\qty(k-r)i'} d \mu.
\end{align*}

\end{proof}

We denote by $W^{m,2}$ the Hilbert space given by this inner product. Fixing $m\geq2$, and using this inner product for fixed $k=1,2$ or $3$, we apply the Gram-Schmidt algorithm to the sequence of polynomials $\qty{P_{n,k}^{\qty(0)}}_{n=0}^{\infty}$ to get the Sobolev orthogonal polynomials (with respect to $q_0$). By an abuse of notation, we still call the resulting functions, the Sobolev OPs and denote them by $\qty{\tilde{s}_{n, k}\qty(x;X)}_{n=0}^{\infty}$, where $X=\qty{\chi_{\ell}}_{\ell=1}^m$. When there is no confusion about $k$ and $X$ we will simply write $\qty{\tilde{s}_{n}}_{n=0}^{\infty}$. The corresponding orthonormal polynomials will be denoted $\qty{\tilde{S}_{n, k}\qty(x;X)}_{n\geq 0}$ or $\qty{\tilde{S}_{n}}_{n\geq 0}$ when there is no confusion.

The first result we prove is the following generalization of Theorem~\ref{th:k23recurrence} to the higher order Sobolev  inner product for a fixed  $k=2$ or $3$. For convenience, we denote by $\mathcal{G}^m$ the $m$ fold composition of the Green operator, where $m\geq 2$ is an integer.

\begin{theorem}\label{th:gen_rec}
Fix an integer $m\geq 2$ and assume that $k=2$ or $3$. Let 
$$\mathcal{F}_{m+j}\qty(x)=\mathcal{G}^{m}p_{j}\qty(x),$$ where $\{p_{j}\}_{j\geq 0}$ denotes the corresponding Legendre polynomials. 
For the higher order Sobolev inner product~\eqref{eq:sobk}, we have the following generalized recursion relation for $n\ge -1$
\begin{equation}\label{generalrecur}
\begin{cases}
\mathcal{F}_{n+m+1}=\tilde{s}_{n+m+1} + \sum\limits_{\ell = 0}^{2m-1}a_{n, \ell}\tilde{s}_{n+m-\ell}, \\
a_{n, \ell} = \frac{\inner{\mathcal{F}_{n+m+1}}{\tilde{s}_{n+m-\ell}}_{S^m}}{\inner{\tilde{s}_{n+m-\ell}}{\tilde{s}_{n+m-\ell}}_{S^m}},
\end{cases}
\end{equation}
and   $\tilde{s}_j:=0$ if $j<0$. 
\end{theorem}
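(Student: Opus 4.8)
### Proof Proposal

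The plan is to generalize the argument of Theorem~\ref{th:k23recurrence} by exploiting two structural facts: first, that $\mathcal{F}_{n+m+1} = \mathcal{G}^m p_n$ is a polynomial of degree $n+m+1$ (the Green operator raises degree by one, applied $m$ times), and second, that the higher-order Sobolev inner product only ``sees'' interactions up to a bounded window of indices. Since $\tilde{s}_{n+m+1}$ is by construction the monic Sobolev-$m$ orthogonal polynomial of degree $n+m+1$, and $\mathcal{F}_{n+m+1}$ is a degree $n+m+1$ polynomial, the first step is to expand $\mathcal{F}_{n+m+1}$ in the orthogonal basis $\qty{\tilde{s}_j}_{j=0}^{n+m+1}$, writing $\mathcal{F}_{n+m+1} = \sum_{j=0}^{n+m+1} a_j \tilde{s}_j$ with $a_j = \tfrac{\inner{\mathcal{F}_{n+m+1}}{\tilde{s}_j}_{S^m}}{\inner{\tilde{s}_j}{\tilde{s}_j}_{S^m}}$. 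I would first verify that the leading coefficient $a_{n+m+1}=1$, which holds because Lemma~\ref{Lemma: Family 2 and 3 is self contained on Green's, Family 1 isn't} (applied $m$ times) guarantees $\mathcal{F}_{n+m+1}$ stays in the $k$-family and is monic of degree $n+m+1$.

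The crux is to show that the coefficients $a_j$ vanish whenever $j < n-m+1$, i.e.\ that only the top $2m+1$ terms survive, giving exactly the $2m$ summands indexed by $\ell = 0, \dots, 2m-1$ plus the leading term. The key computation is to show $\inner{\mathcal{F}_{n+m+1}}{\tilde{s}_j}_{S^m} = 0$ for $j < n-m+1$. I would expand the inner product as $\sum_{\ell=0}^m \chi_\ell \int_{SG} \lap^\ell \mathcal{F}_{n+m+1}\,\lap^\ell \tilde{s}_j\,\dd\mu$ and use that $\lap^\ell \mathcal{F}_{n+m+1} = \mathcal{G}^{m-\ell}p_n$ for $\ell \le m$, since each Laplacian cancels one Green operator. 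Thus each term becomes $\chi_\ell \int_{SG}(\mathcal{G}^{m-\ell}p_n)(\lap^\ell \tilde{s}_j)\,\dd\mu$. Moving the self-adjoint operator $\mathcal{G}^{m-\ell}$ onto the second factor (as in the repeated integration-by-parts / Gauss--Green manipulation already used in the proof of Theorem~\ref{th:k23recurrence}), one obtains an $L^2$-pairing of $p_n$ against a polynomial of degree $\deg\tilde{s}_j + 2(m-\ell) \le j + 2m$; I would then invoke that $p_n$ is $L^2$-orthogonal to all polynomials of degree $< n$, so the pairing vanishes once $j + 2m < n$, precisely the condition $j < n-2m$. A careful index count, tracking which boundary terms in the Gauss--Green formula vanish because the iterated Green potentials satisfy zero boundary conditions (and, for $k=2,3$, the symmetry/antisymmetry that killed the boundary contributions in Lemma~\ref{Lemma: Expansions for f}), sharpens this to the stated cutoff $j < n+m-(2m-1) = n-m+1$.

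The main obstacle is bookkeeping the boundary terms arising from the repeated Gauss--Green formula when sliding $\mathcal{G}^{m-\ell}$ across the pairing. Each application introduces boundary terms $\sum_{\ell'} \big(u(q_{\ell'})\partial_n v(q_{\ell'}) - v(q_{\ell'})\partial_n u(q_{\ell'})\big)$, and I must argue these cancel at every stage. The cleanest route is to restrict attention to the $k=2,3$ families, where Lemma~\ref{Lemma: Family 2 and 3 is self contained on Green's, Family 1 isn't} ensures that all the iterated potentials $\mathcal{G}^{r}p_j$ remain in the same symmetric (resp.\ antisymmetric) family and vanish on $V_0$, so that the relevant normal-derivative boundary products cancel exactly as in Lemma~\ref{Lemma: Expansions for f}; this is precisely why the hypothesis excludes $k=1$. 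Once the orthogonality $\inner{\mathcal{F}_{n+m+1}}{\tilde{s}_j}_{S^m}=0$ for $j<n-m+1$ is established, the expansion collapses to~\eqref{generalrecur}, and the displayed formula for $a_{n,\ell}$ is just the reindexed projection coefficient. I expect no difficulty for $n \ge -1$ given the convention $\tilde{s}_j := 0$ for $j<0$, which simply truncates the sum.
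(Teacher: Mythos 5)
Your overall strategy is the same as the paper's: expand $\mathcal{F}_{n+m+1}$ in the orthogonal basis $\qty{\tilde{s}_j}$, note it is monic of degree $n+m+1$ by iterating Lemma~\ref{Lemma: Family 2 and 3 is self contained on Green's, Family 1 isn't}, and kill the low-order coefficients by writing $\lap^\ell\mathcal{F}_{n+m+1}=\mathcal{G}^{m-\ell}p_{n+1}$ and moving the Green operator onto the other factor. However, two concrete errors derail the execution. First, an indexing slip: $\mathcal{F}_{n+m+1}=\mathcal{G}^m p_{n+1}$, not $\mathcal{G}^m p_n$ (otherwise the degree would be $n+m$, not $n+m+1$). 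Second, and more seriously, your degree count for the transferred factor is wrong: $\lap^\ell\tilde{s}_j$ has degree $j-\ell$, so $\mathcal{G}^{m-\ell}\qty(\lap^\ell\tilde{s}_j)$ has degree $j-\ell+(m-\ell)=j+m-2\ell\le j+m$, not $j+2(m-\ell)\le j+2m$. With the correct count, $L^2$-orthogonality of $p_{n+1}$ to polynomials of degree at most $n$ gives vanishing exactly when $j+m\le n$, i.e.\ $j<n-m+1$ --- precisely the stated window, with nothing left to sharpen. Your count instead yields only $j<n-2m$, i.e.\ a recurrence with roughly $3m$ terms, and the mechanism you propose to close the gap (careful cancellation of Gauss--Green boundary terms) is not where the improvement comes from.

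Indeed, the boundary-term bookkeeping you identify as the ``main obstacle'' is a non-issue in this formulation: transferring $\mathcal{G}^{m-\ell}$ across the $L^2$ pairing uses only Fubini and the symmetry $G\qty(x,y)=G\qty(y,x)$ of the kernel, so no boundary terms ever appear; Gauss--Green would only enter if you tried to move $\lap^\ell$ instead. The role of Lemma~\ref{Lemma: Family 2 and 3 is self contained on Green's, Family 1 isn't} (and the restriction to $k=2,3$) is solely to guarantee that $\mathcal{G}^{m-\ell}\qty(\lap^\ell\tilde{s}_j)$ is again a polynomial in the same family, so that the $L^2$-orthogonality of $p_{n+1}$ applies to it. With the degree count corrected and the appeal to boundary terms removed, your argument coincides with the paper's proof.
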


\begin{proof}
Let $g_n = \mathcal{F}_{n+m+1}-\tilde{s}_{n+m+1} - \sum\limits_{\ell = 0}^{2m-1}a_{n, \ell}\tilde{s}_{n+m-\ell}$. We know that $g_n$ has degree $\leq n+m$. Consider $\inner{g_n}{\tilde{s}_t}_{S^m}$ for $t < n+m$. For $n-m+1 \leq t \leq n+m$, it follows from the definition that $\inner{g_n}{\tilde{s}_t}_{S^m}=0$. For $0\le t < n-m+1$, we have
\begin{align}
\inner{g_n}{\tilde{s}_t}_{S^m} = -\inner{\mathcal{F}_{n+m+1}}{\tilde{s}_t}_{S^m} = \sum\limits_{\ell = 0}^m \chi_\ell\int_{SG}\lap^\ell \mathcal{F}_{n+m+1}\lap^\ell \tilde{s}_t\,\dd{\mu}\\
=\sum\limits_{\ell = 0}^m \chi_\ell\int_{SG}\mathcal{G}^{m-\ell}p_{n+1}\lap^\ell \tilde{s}_t=\sum\limits_{\ell = 0}^m \chi_\ell\int_{SG}p_{n+1}\mathcal{G}^{m-\ell}\qty(\lap^\ell \tilde{s}_t)=0.
\end{align}
where the last equality follows from Lemma \ref{Lemma: Family 2 and 3 is self contained on Green's, Family 1 isn't}. Thus, we have shown that $g_n = 0$.
\end{proof}

Similarly to the asymptotics analysis of the Sobolev OPs when $m=1$ done in Section~\ref{sec3.2}, we now state and give short proofs of analogous results for higher order Sobolev OPs.

\begin{corollary} \label{cor:k=2,3 bounds} Let $m\geq 2$ and fix $k=2$ or $3$. Suppose that $\{\chi_\ell\}_{\ell=0}^m$  is such that $\chi_\ell \leq \chi_m$ for each $0\leq \ell \leq m$. Then there exist positive constants $C_1=C_1\qty(n,\mu, m)$, $C_2=C_2\qty(n,\mu,m)$ such that for any $n\ge 0$, 
\begin{equation}\label{estimatesnormsm}
C_1 \leq  \|\tilde{s}_n\|^2_{S^{m}} \leq C_2,\quad \text{and}\quad C_1+ \chi_m \|p_{n-m}\|_{2}^2 \leq  \|\tilde{s}_n\|_{S^m}^2 \leq C_2+\chi_m\|p_{n-m}\|_{2}^2.
\end{equation}
   Consequently, for any $n\ge 2m+1$, we have
$$\|\tilde{s}_n-\mathcal{F}_n\|_{2}\leq C\qty(n,M,m,\mu)\chi_m^{-1}, \quad \text{and}\quad 
   \lim\limits_{\chi_m\to \infty}\|\lap^i \tilde{s}_n-\mathcal{G}^{m-i}p_{n-m}\|_{\infty}=0.
$$
    for any $0\le i\le m$.
   \end{corollary}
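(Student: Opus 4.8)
The plan is to run, one order up, the comparison-and-recursion strategy that proved Proposition~\ref{Proposition: L2, H1, coefficients estimates} and Corollary~\ref{Corollary: Convergence result for lambda} in the case $m=1$, with $\mathcal{G}$ replaced by $\mathcal{G}^m$ and the single weight $\chi$ replaced by the top weight $\chi_m$; throughout I treat $\chi_0,\dots,\chi_{m-1}$ as fixed and bounded and let $\chi_m\to\infty$. First I would establish \eqref{estimatesnormsm}. The natural competitor is $\mathcal{F}_n:=\mathcal{G}^m p_{n-m}$, which by Lemma~\ref{Lemma: Family 2 and 3 is self contained on Green's, Family 1 isn't} is a monic degree-$n$ polynomial in the same $k$-family and satisfies $\lap^\ell\mathcal{F}_n=\mathcal{G}^{m-\ell}p_{n-m}$, so in particular $\lap^m\mathcal{F}_n=p_{n-m}$. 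Since $\tilde s_n$ is the monic degree-$n$ polynomial of least $S^m$-norm, $\|\tilde s_n\|_{S^m}^2\le\|\mathcal{F}_n\|_{S^m}^2=\sum_{\ell=0}^{m-1}\chi_\ell\|\mathcal{G}^{m-\ell}p_{n-m}\|_2^2+\chi_m\|p_{n-m}\|_2^2$, giving the upper bound with $C_2:=\sum_{\ell=0}^{m-1}\chi_\ell\|\mathcal{G}^{m-\ell}p_{n-m}\|_2^2$. For the lower bound I would drop all but the $\ell=0$ and $\ell=m$ terms and use that $\lap^m\tilde s_n$ is monic of degree $n-m$ and $\tilde s_n$ monic of degree $n$, so $\|\lap^m\tilde s_n\|_2\ge\|p_{n-m}\|_2$ and $\|\tilde s_n\|_2\ge\|p_n\|_2$; this yields $\|\tilde s_n\|_{S^m}^2\ge\|p_n\|_2^2+\chi_m\|p_{n-m}\|_2^2$, i.e. $C_1:=\|p_n\|_2^2$. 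The coarse estimate in \eqref{estimatesnormsm} follows a fortiori. A crucial by-product: transposing the nonnegative quantity $\chi_m(\|\lap^m\tilde s_n\|_2^2-\|p_{n-m}\|_2^2)\ge0$ in $\|\tilde s_n\|_{S^m}^2\le\|\mathcal{F}_n\|_{S^m}^2$ bounds $\|\tilde s_n\|_2$ uniformly in $\chi_m$, whence (for fixed $n$, by norm equivalence on the finite-dimensional space of degree-$\le n$ polynomials) every $\|\lap^\ell\tilde s_n\|_2$ is bounded uniformly in $\chi_m$.

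Next I would prove the $L^2$-rate. Applying Theorem~\ref{th:gen_rec} at the appropriate index expresses $\mathcal{F}_n-\tilde s_n=\sum_\ell a_{n,\ell}\tilde s_{j_\ell}$ as a finite combination of OPs of degree $j_\ell<n$. The key point is that each coefficient $a_{n,\ell}=\inner{\mathcal{F}_n}{\tilde s_{j_\ell}}_{S^m}/\|\tilde s_{j_\ell}\|_{S^m}^2$ is $O(\chi_m^{-1})$: in the numerator the top ($\ell=m$) term equals $\chi_m\int_{SG} p_{n-m}\,\lap^m\tilde s_{j_\ell}\,\dd{\mu}$, which vanishes because $p_{n-m}$ is $L^2$-orthogonal to the lower-degree polynomial $\lap^m\tilde s_{j_\ell}$; hence the numerator carries no factor of $\chi_m$ and, by the uniform bounds on $\|\lap^\ell\tilde s_{j_\ell}\|_2$ from the previous step, is $O(1)$, while the denominator is $\ge\chi_m\|p_{j_\ell-m}\|_2^2$ by \eqref{estimatesnormsm}. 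Combined with $\|\tilde s_{j_\ell}\|_2\le C$ this gives $\|\tilde s_n-\mathcal{F}_n\|_2\le C(n,m,\mu)\,\chi_m^{-1}$.

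Finally, for the $L^\infty$ limits I would apply $\lap^i$ to $\mathcal{F}_n-\tilde s_n=\sum_\ell a_{n,\ell}\tilde s_{j_\ell}$ and invoke the Sobolev embedding $\|u\|_\infty\le C(\|u\|_2+\|\lap u\|_2)$ used in Corollary~\ref{Corollary: laplacian-L2 and Linfinity estimates}. Since for fixed $n$ each $\|\lap^i\tilde s_{j_\ell}\|_2$ and $\|\lap^{i+1}\tilde s_{j_\ell}\|_2$ is bounded and each $a_{n,\ell}=O(\chi_m^{-1})$, both $\|\lap^i(\tilde s_n-\mathcal{F}_n)\|_2$ and $\|\lap^{i+1}(\tilde s_n-\mathcal{F}_n)\|_2$ tend to $0$, so that $\|\lap^i\tilde s_n-\mathcal{G}^{m-i}p_{n-m}\|_\infty=\|\lap^i(\tilde s_n-\mathcal{F}_n)\|_\infty\to0$, using $\lap^i\mathcal{F}_n=\mathcal{G}^{m-i}p_{n-m}$.

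The main obstacle is the index bookkeeping hidden in the hypothesis on $n$. The decay $a_{n,\ell}=O(\chi_m^{-1})$ rests entirely on the denominator $\|\tilde s_{j_\ell}\|_{S^m}^2$ being of order $\chi_m$, which by \eqref{estimatesnormsm} happens only when $j_\ell\ge m$, i.e. $p_{j_\ell-m}\neq0$; any $\tilde s_{j_\ell}$ with $j_\ell<m$ contributes a coefficient that is merely $O(1)$ and would destroy both the rate and the limit. Since the lowest index entering the recursion is $n-2m$, this forces the requirement that every OP appearing lie in the regime $j_\ell\ge m$ (reducing to the familiar $n\ge 3$ when $m=1$); the delicate step is to verify that the stated range on $n$ really does place all $2m$ recursion terms in this regime, and to confirm the corresponding constants depend only on $n,m,\mu$ and the fixed lower weights and not on $\chi_m$.
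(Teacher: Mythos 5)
Your proposal is correct and follows essentially the same route as the paper's (much terser) proof: the two-sided bound \eqref{estimatesnormsm} comes from squeezing $\|\tilde s_n\|_{S^m}^2$ between $\|\mathcal{F}_n\|_{S^m}^2$ and $\|p_n\|_2^2+\chi_m\|p_{n-m}\|_2^2$ via the extremal property of monic orthogonal polynomials, the recursion coefficients are shown to be $O\qty(\chi_m^{-1})$ because $\ip{\lap^m\mathcal{F}_n}{\lap^m\tilde s_t}_2=\ip{p_{n-m}}{\lap^m\tilde s_t}_2=0$ for $t<n$, and the conclusion follows from the triangle inequality and equivalence of norms in finite dimensions. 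The index worry you flag at the end is genuine and the paper is silent on it: for $m\ge2$ and $2m+1\le n<3m$ the recursion reaches $\tilde s_{n-2m}$ with $n-2m<m$, whose $S^m$-norm contains no $\chi_m$ contribution and whose projection coefficient equals $\|p_{n-m}\|_2^2/\|\tilde s_{n-2m}\|_{S^m}^2$, a nonzero constant independent of $\chi_m$, so the argument as written only delivers the $O\qty(\chi_m^{-1})$ rate for $n\ge 3m$ (which coincides with the stated threshold only when $m=1$).
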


   \begin{proof}
  The first two estimates follow from the fact that 
  $$C\qty(m,n,M,\mu)+\chi_m\|p_{n-m}\|_{2}^2\geq \|\mathcal{F}_n\|_{S^m}^2\geq \|\tilde{s}_n\|_{S^m}^2\geq \sum\limits_{\ell = 0}^{m-1} \chi_\ell\|\lap ^\ell \tilde{s}_n\|_2^2 +\chi_m\|p_{n-m}\|_{2}^2.$$ 
  However, $$ \sum\limits_{\ell = 0}^{m-1} \chi_\ell\|\lap ^\ell \tilde{s}_n\|_2^2 +\chi_m\|p_{n-m}\|_{2}^2\geq \|p_n\|_{2}^2+\chi_m\|p_{n-m}\|_{2}^2.$$

   Next, we estimate directly  the coefficients in recurrence. By using the Cauchy-Schwarz inequality for the inner product $\inner{f}{g}_{S^m} = \sum\limits_{\ell = 0}^{m-1} \chi_\ell\ip{\lap^\ell f}{\lap^\ell g}_2$ along with the fact that $$\ip{\lap^m \mathcal{F}_n}{\lap^m \tilde{s}_t}_2 =\ip{p_{n-m}}{\lap^m \tilde{s}_t}_2=0$$ for $t<n$, we have $|a_{n-1-m,\ell}|\le \chi_m^{-1}C\qty(m,n,\mu,M)$ for any $\ell$.
   
   Estimating the $L^2$ norm of $\mathcal{F}_n-\tilde{s}_n$ directly in the recurrence by the triangle inequality, and observing that all norms in a finite dimensional space are equivalent, completes the proof.
   \end{proof}

The next result is a generalization of Theorem \ref{th: sobolev ode}. For the sake of completeness we include its proof.

 \begin{theorem}\label{th: higher order sobolev ode}
Fix $k=2$ or $3$. Then for the higher order Sobolev inner product \eqref{eq:sobk}, the Sobolev orthogonal polynomials satisfy the following differential equation for each $n\geq m$: 
\begin{align}
\tilde s_n\qty(x)+\sum\limits_{l=1}^m \chi_l \Delta^{2l} \tilde s_n\qty(x)=\Delta^m p_{n+m}\qty(x)+\sum\limits_{l=1}^{2m}d_{n+m-l}^{2} \tilde \xi_n^{-2}a_{n+m-l-1,2m-l-1}\lap^m p_{n+m-l}, \label{th: Sobolev ODE} 
\end{align}
 where $\qty{p_n}_{n\geq 0}$ are the corresponding Legendre OPs, $\qty{a_{n,l}}$ is given as in Theorem \ref{th:gen_rec},  $d_n^{-2}=\|p_n\|_2^2$, and  $\tilde \xi_{n}^{-2}=\|\tilde s_n\|_{S^m}^2.$
\end{theorem}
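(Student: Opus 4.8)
The plan is to mimic the proof of Theorem~\ref{th: sobolev ode}, generalizing the single Green operator inversion to an $m$-fold one. First I would define, for each $n \geq m$, the auxiliary polynomial $T_{n+m}(x) = \mathcal{G}^m \tilde s_n(x)$, which satisfies $\lap^m T_{n+m} = \tilde s_n$ and has degree $n+m$ with leading term $p_{n+m}$ (up to the monic normalization). The key mechanism is a duality/self-adjointness argument: for any polynomial $h$ of degree at most $n - m - 1$, set $g = \mathcal{G}^m h$ so that $\lap^m g = h$, and show that $\inner{\tilde s_n}{g}_{S^m} = 0$ by the orthogonality of $\tilde s_n$ to lower-degree polynomials. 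The point is that repeated application of the Gauss-Green formula transfers all the Laplacians off $g$ and onto $T_{n+m}$, producing the expression $\int_{SG} h \, (\,T_{n+m} + \sum_{l=1}^m \chi_l \lap^{2l} T_{n+m}\,)\,\dd{\mu}$ plus boundary terms which vanish because $g$, $\lap g, \dots$ and $T_{n+m}, \lap T_{n+m}, \dots$ all vanish on $V_0$ (they are iterated Green potentials).

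Next I would conclude from the vanishing of that integral against all $h$ of degree $\leq n-m-1$ that
\begin{equation}
T_{n+m} + \sum_{l=1}^m \chi_l \lap^{2l} T_{n+m} = p_{n+m} + \sum_{l=1}^{2m} y_{n,l}\, p_{n+m-l},
\end{equation}
a polynomial expansion whose top coefficient is $1$ and whose tail has only $2m$ nonzero terms. Then I would compute each coefficient $y_{n,l}$ by pairing with $p_{n+m-l}$ in the $L^2$ inner product: since $\lap^{2l} T_{n+m}$ is orthogonal to $p_{n+m-l}$ for the relevant indices (as in Theorem~\ref{th: sobolev ode}, where the boundary-correction terms drop out), one gets $y_{n,l} = d_{n+m-l}^2 \inner{T_{n+m}}{p_{n+m-l}}_2 = d_{n+m-l}^2 \inner{\tilde s_n}{\mathcal{F}_{n+m-l}}_2 = d_{n+m-l}^2 \inner{\tilde s_n}{\mathcal{F}_{n+m-l}}_{S^m}$, and the recurrence~\eqref{generalrecur} of Theorem~\ref{th:gen_rec} identifies this last inner product with $\tilde\xi_n^{-2} a_{n+m-l-1, 2m-l-1}$. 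Finally, applying $\lap^m$ to both sides and using $\lap^m T_{n+m} = \tilde s_n$ together with $\lap^m p_{n+m} = \lap^m$(monomial of degree $n+m$)$= \Delta^m p_{n+m}$ yields exactly~\eqref{th: Sobolev ODE}.

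The main obstacle I expect is the careful bookkeeping of the boundary terms across $m$ successive applications of Gauss-Green. Each application generates a sum $\sum_{\ell} g(q_\ell)\partial_n(\cdot)(q_\ell) - (\cdot)(q_\ell)\partial_n g(q_\ell)$, and one must verify that at every stage the functions being differentiated (the iterated Green potentials of $\tilde s_n$ and of $h$) vanish at all three boundary vertices so that these terms telescope away. This is precisely where the restriction to $k = 2, 3$ matters: by Lemma~\ref{Lemma: Family 2 and 3 is self contained on Green's, Family 1 isn't}, the Green operator keeps these families self-contained and the potentials vanish on $V_0$, whereas for $k=1$ a spurious $P_{0,2}$ term would appear and the boundary terms would not cancel. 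The coefficient identification via the recurrence is then routine, so I would state it briefly and refer back to the $m=1$ computation in Theorem~\ref{th: sobolev ode} rather than repeat it in full.
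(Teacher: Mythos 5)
Your overall strategy (push the $m=1$ argument of Theorem~\ref{th: sobolev ode} through with $m$-fold Green operators) is the right instinct, but the specific auxiliary polynomial you build is the wrong one, and the step that justifies it fails. You claim that iterated Gauss--Green turns $\inner{\tilde s_n}{g}_{S^m}$ into $\int_{SG} h\,\bigl(T_{n+m}+\sum_{l=1}^m\chi_l\lap^{2l}T_{n+m}\bigr)\dd{\mu}$ with all boundary terms vanishing because ``$T_{n+m},\lap T_{n+m},\dots$ all vanish on $V_0$.'' That is only true for $\lap^jT_{n+m}=\mathcal{G}^{m-j}\tilde s_n$ with $j\le m-1$; for $j\ge m$ one has $\lap^jT_{n+m}=\lap^{j-m}\tilde s_n$, which does \emph{not} vanish at $q_1,q_2$ (for $k=2$, e.g., $\tilde s_n(q_1)$ involves $\beta_n\neq 0$ generically). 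Converting $\chi_l\int\lap^l\tilde s_n\,\lap^lg\,\dd{\mu}=\chi_l\int\lap^{m+l}T_{n+m}\cdot\mathcal{G}^{m-l}h\,\dd{\mu}$ into $\chi_l\int\lap^{2l}T_{n+m}\cdot h\,\dd{\mu}$ requires transferring $m-l$ Laplacians through exponents $m+l-1,\dots,2l$, all of which are $\ge m$ at the first step, so the boundary contributions $\sum_\ell(\lap^{m+l-1}T_{n+m})(q_\ell)\,\partial_n(\mathcal{G}^{m-l}h)(q_\ell)$ survive. Concretely, for $m=2$, $l=1$ your term $\lap^2T_{n+2}=\tilde s_n$ differs from the correct term $\mathcal{G}\lap\tilde s_n=\tilde s_n-H$ by the harmonic function $H$ matching $\tilde s_n$ on $V_0$; thus your claimed expansion $T_{n+m}+\sum\chi_l\lap^{2l}T_{n+m}=p_{n+m}+\sum_{l=1}^{2m}y_{n,l}p_{n+m-l}$ acquires a spurious $p_0$-component and is simply false once $n>2m-2$.

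The paper avoids Gauss--Green entirely at this stage: it uses the $L^2$ self-adjointness of $\mathcal{G}$ to move $\mathcal{G}^{m-l}$ off $h$ and onto $\lap^l\tilde s_n$ term by term, producing the auxiliary polynomial $\mathcal{G}^m\tilde s_n+\sum_{l=1}^m\chi_l\,\mathcal{G}^{m-l}(\lap^l\tilde s_n)$ with no boundary terms at all. This object genuinely lies in $p_{n+m}+\operatorname{span}\{p_{n-m},\dots,p_{n+m-1}\}$; it differs from yours by polyharmonic corrections that are annihilated by the final $\lap^m$, which is why your target identity \eqref{th: Sobolev ODE} is still true even though your intermediate identity is not. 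Separately, your coefficient computation has an index slip: pairing with $p_{n+m-l}$ produces $d_{n+m-l}^2\ip{\tilde s_n}{\mathcal{F}_{n+2m-l}}_{S^m}$ (not $\mathcal{F}_{n+m-l}$), obtained by keeping all the $\chi_j$ terms and reassembling them into the $S^m$ inner product rather than arguing that the higher-Laplacian terms drop out; this is what matches $\tilde\xi_n^{-2}a_{n+m-l-1,2m-l-1}$ via \eqref{generalrecur}.
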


\begin{proof}
Let $n\geq m$. Given any polynomial $h$ of degree at most $n-m-1$, let $g\qty(x)=\mathcal G^m h\qty(x)$. Since $g$ is a polynomial of degree at most $n-1$, we see that 
\begin{align}
    0&=\ip{\tilde s_n}{g}_{S^m}\\
    &=\int_{SG}\tilde s_n\qty(x)g\qty(x)\, \dd{\mu}\qty(x)+\sum\limits_{l=1}^m\chi_l \int_{SG} \lap^l \tilde s_n\qty(x) \lap^l g\qty(x)\, \dd{\mu}\qty(x)\\
    &=\int_{SG}\tilde s_n\qty(x)\mathcal G^{m} h\qty(x)\, \dd{\mu}\qty(x)+\sum\limits_{l=1}^m\chi_l \int_{SG} \lap^l \tilde s_n\qty(x) \mathcal G^{m-l} h\qty(x)\, \dd{\mu}\qty(x)\\
     &=\int_{SG}\mathcal G^{m}\tilde s_n\qty(x)h\qty(x)\, \dd{\mu}\qty(x)+\sum\limits_{l=1}^m\chi_l \int_{SG}  \mathcal G^{m-l}\qty(\lap^l \tilde s_n)\qty(x)  h\qty(x)\, \dd{\mu}\qty(x)\\
    &=\int_{SG}\qty(\mathcal G^{m}\tilde s_n\qty(x)+\sum\limits_{l=1}^m\chi_l\mathcal G^{m-l}\qty(\lap^l \tilde s_n)\qty(x)) \;h\qty(x)\, \dd{\mu}\qty(x).
    \end{align}
Note that $\mathcal G^{m}\tilde s_n\qty(x)+\sum\limits_{l=1}^m\chi_l\mathcal G^{m-l}\qty(\lap^l \tilde s_n)\qty(x)$ is a monic polynomial of degree $n+m$, thus we can write
$$\mathcal G^{m}\tilde s_n\qty(x)+\sum\limits_{l=1}^m\chi_l\mathcal G^{m-l}\qty(\lap^l \tilde s_n)\qty(x)=  p_{n+m}\qty(x)+\sum\limits_{i=1}^{2m}b_{n,i} \,p_{n+m-i}\qty(x),$$
where 
\begin{align}
    b_{n,i}&=d_{n+m-i}^{2}\ip{\mathcal G^{m}\tilde s_n+\sum\limits_{l=1}^m\chi_l\mathcal G^{m-l}\qty(\lap^l \tilde s_n)}{p_{n+m-i}}_2\\
    &=d_{n+m-i}^{2}\sum\limits_{l=0}^m\chi_l \ip{\lap^l \tilde s_n}{\mathcal G^{m-l}\qty(p_{n+m-i})}_2\\
    &=d_{n+m-i}^{2}\sum\limits_{l=0}^m\chi_l \ip{\lap^l \tilde s_n}{ \lap^l \mathcal F_{n+2m-i}}_2\\
    &=d_{n+m-i}^{2}\ip {\tilde s_n}{\mathcal F_{n+2m-i}}_{S^m}\\
    &=d_{n+m-i}^{2} \tilde \xi_n^{-2}a_{n+m-i-1,2m-i-1}.
\end{align}

Taking Laplacian $m$ times on both sides yields the result.
\end{proof}

\begin{remark}\label{remark: Other kinds of $H^1$ inner product}
Theorem \ref{th:gen_rec} may be established for the following more general inner product:  
\begin{align}\label{eq:point mass inner}
\inner fg_{S^m}= 
&\sum\limits_{\ell = 0}^m \chi_\ell\ip{\lap^\ell f}{\lap^\ell g}_2 +\sum\limits_{\ell=0}^{m-1}\chi_{\ell}'\,\mathscr{E}\qty(\lap^\ell f,\lap^\ell g)+\nonumber\\
&\sum\limits_{\ell=0}^{m-1
}\qty[\lap ^\ell f\qty(q_0)\,\lap ^\ell f\qty(q_1)\,\lap ^\ell f\qty(q_2)] M_\ell \qty[\lap ^\ell g\qty(q_0)\,\lap ^\ell g\qty(q_1)\,\lap ^\ell g\qty(q_2)]^T,
\end{align}
where 
$\chi_\ell$, $\chi'_\ell$ are non-negative, $M_\ell$ are positive semi-definite $3\times 3$ matrices. 


\end{remark}

\subsection{Numerical results}\label{subsec: numericsSLOPs}

We first consider the Sobolev-Legendre OPs constructed from the families of monomials corresponding to $k=2, 3$. In this case, the  recurrence relation~\eqref{3-term recurrence} allows us to recursively evaluate the anti-symmetric Sobolev orthogonal polynomials $s_n$, once $s_{n-1}, s_{n-2},$ and $f_n$ are known. This approach is used to generate and plot these polynomials on $\sg$ in Figure \ref{fig: k=2,3 H1 plots}. We note that the plots are approximations of the Sobolev OPs on the finite graph approximations $\Gamma_m$ of $\sg$. 
\begin{figure}[h]
    \centering
    \includegraphics[width=0.45\textwidth]{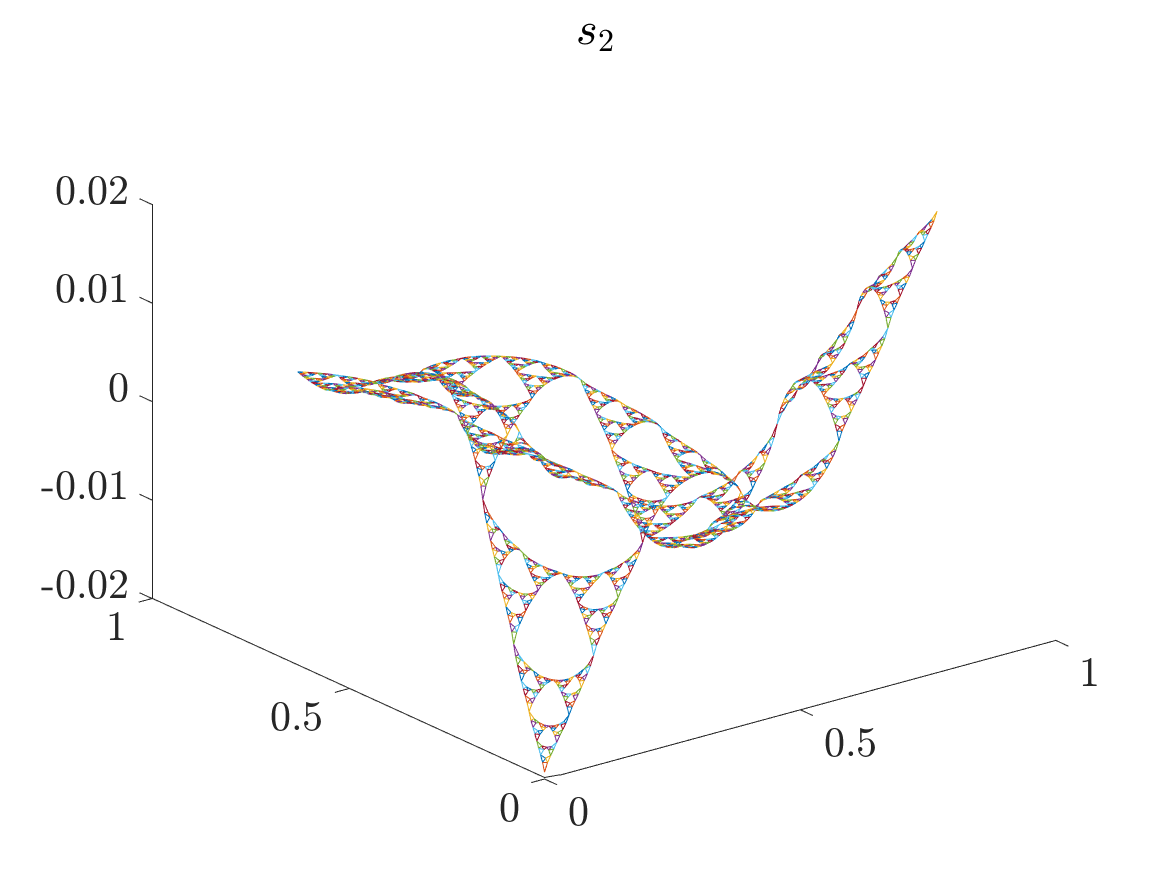}
    \includegraphics[width=0.45\textwidth]{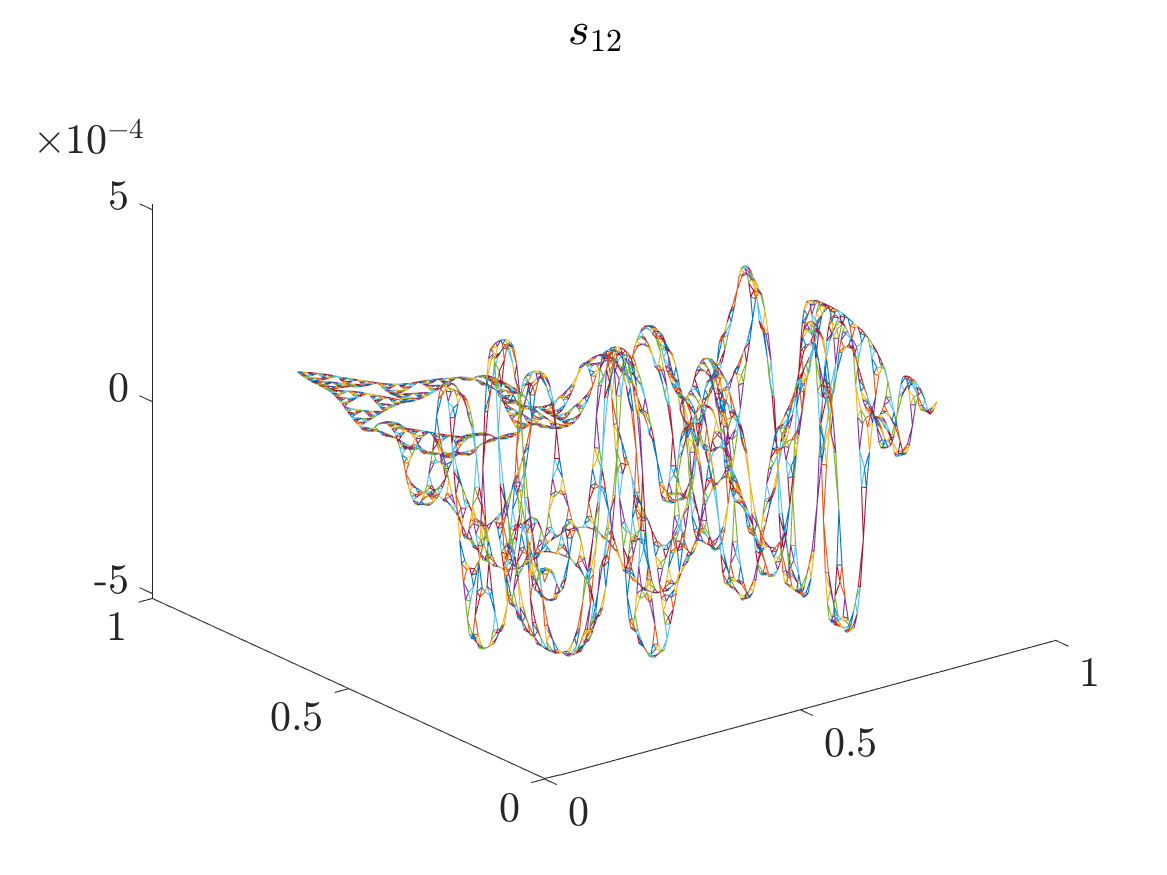}
    \includegraphics[width=0.45\textwidth]{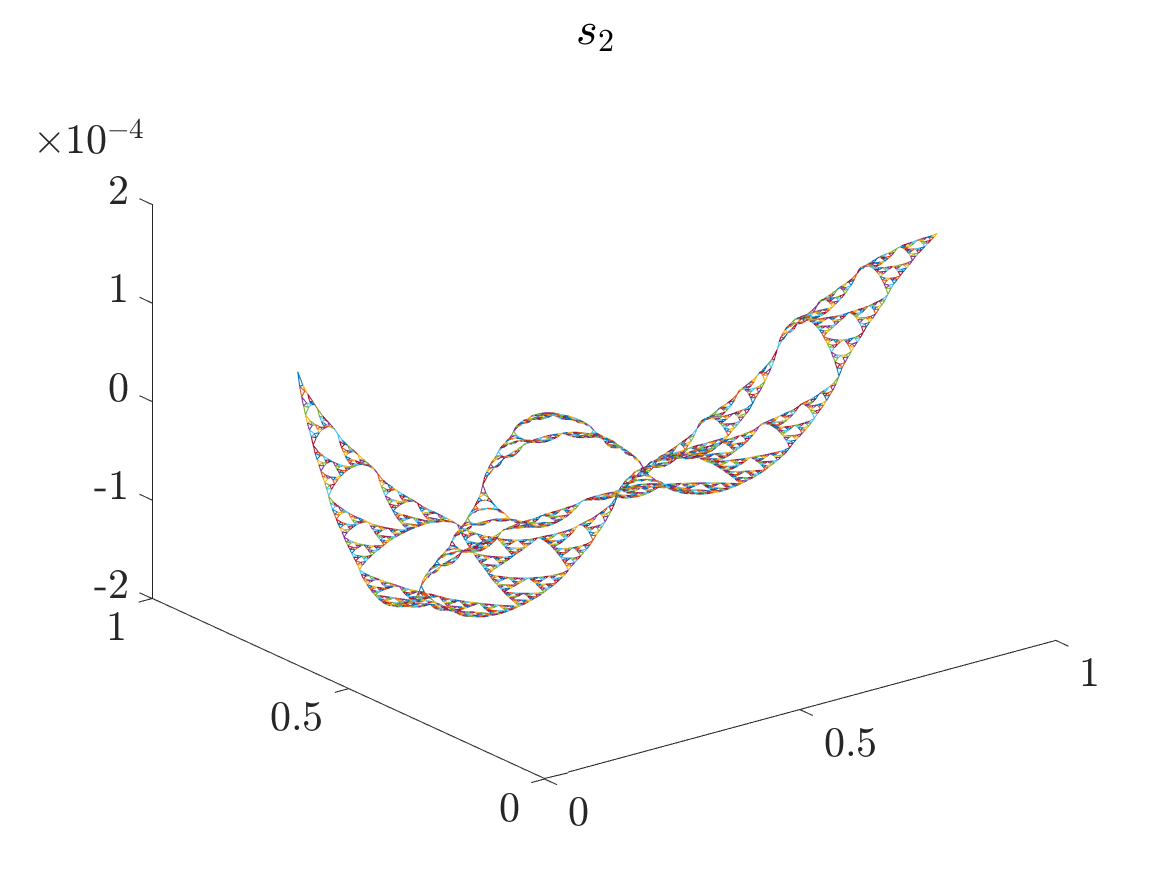}
    \includegraphics[width=0.45\textwidth]{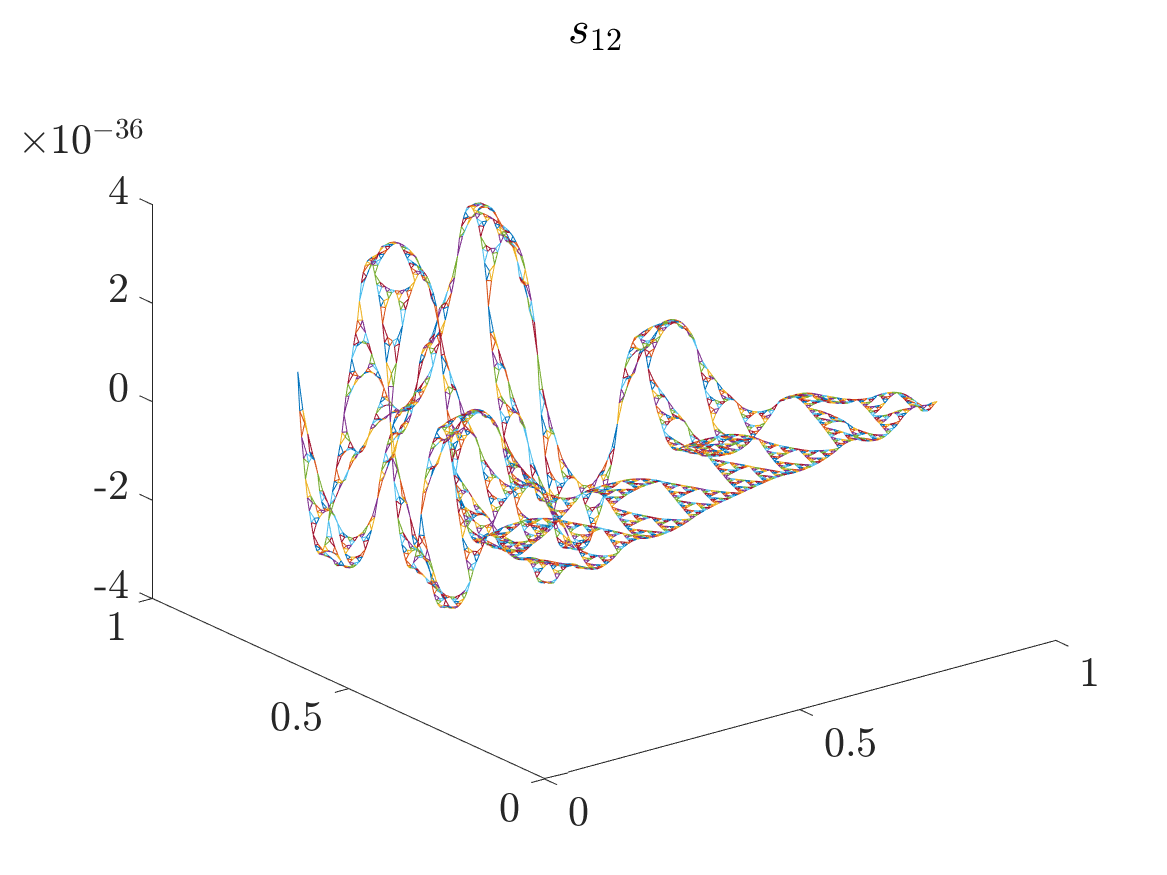}
   \caption{Plotting the Sobolev Orthogonal Polynomials --- top row: $k=3$, bottom row: $k=2$. For both cases we have $\chi = 1$.}
\label{fig: k=2,3 H1 plots}
\end{figure}

One initial observation is that the Sobolev polynomials are $4$ orders of magnitude smaller than the Legendre polynomials found in Figure 4 of \cite{OST}. This is due to the $L^{\infty}$ estimate given in Corollary \ref{Corollary: laplacian-L2 and Linfinity estimates}. The estimate also shows that $s_n$ decays to the zero polynomial uniformly as $n \rightarrow \infty$ due to the decay in $\|p_n\|_{L^2}$. Consequently in Figure \ref{fig: k=2,3 H1 plots} as the degree increases, the orders of magnitudes of the polynomials fall rapidly. In fact, for a sufficiently large $n$, the values taken by $s_n$ are arbitrarily small. To evaluate such high-degree polynomials without accumulating error we switch to computing in rational arithmetic. Our evaluation algorithm is as follows: we first evaluate the ``easy" basis $\qty{f_{j,k}}$ given in \cite[Equation 2.3]{SU} through \cite[Lemma 2.6]{SU}. Then we use \cite[Theorem 2.3]{NSTY} to convert from the easy basis to the monomial basis $\qty{P_{j,k}}$. Finally, after computing the coefficients $\qty{z_{l, n}}$ from Definition \ref{def:basicsop} using either the Gram-Schmidt process or \eqref{3-term recurrence}, we obtain the Sobolev polynomials $s_{n}$ by taking a linear combination of the evaluated monomials with the coefficients $\qty{z_{l, n}}$. The main drawback in this approach is that we can only evaluate $s_n$ on a graph approximation $V_m$ and with increasing $m$ the complexity of the recursion in \cite[Lemma 2.6]{SU} grows exponentially. Additionally, for small $n$, the coefficients $\qty{z_{n, l}}$ may be computed using a Gram-Schmidt routine. However, for large $n$, it is more advisable to use the recurrence relation instead. The complete code listing and documentation can be found at \cite{JSV}. 

We also initiate an investigation of the zero sets of the Sobolev-Legendre polynomials on $SG$ by taking the approach in \cite{OST}. It is known that  the Sobolev and Legendre polynomials have interlacing zeroes on $[-1,1]$, see \cite{MX} for details.
 Analogously, Figure~\ref{fig: Interlacing} is suggestive of ssimilar  mutual interlacing patterns between the zeroes of $p_n$ (Legendre OPs on $SG$), $s_n$ (Sobolev-Legendre OPs on $SG$). Notice that these interlacing properties are highly irregular on the edges of $SG$. Furthermore, the bottom row in Figure~\ref{fig: Interlacing} suggests that the zeros of $s_n$ may not all be simple: $s_{1,7}$ seems to have a zero of multiplicity greater than one. But we have not been able to prove or disprove this observation, even though we can use data at higher resolutions to lend credence to this guess.
\begin{figure}[h]
    \centering
  \includegraphics[width=.45\linewidth]{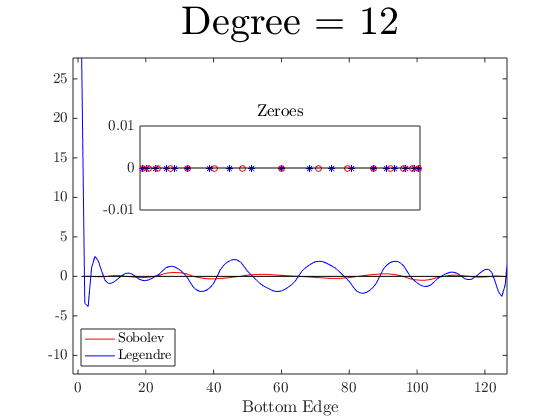}
  \includegraphics[width=.45\linewidth]{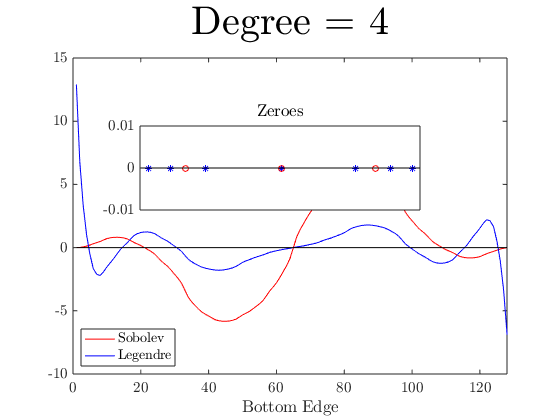}
  \includegraphics[width=.45\linewidth]{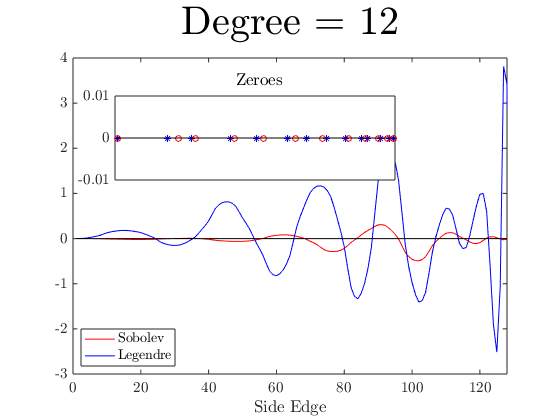}
  \includegraphics[width=.45\linewidth]{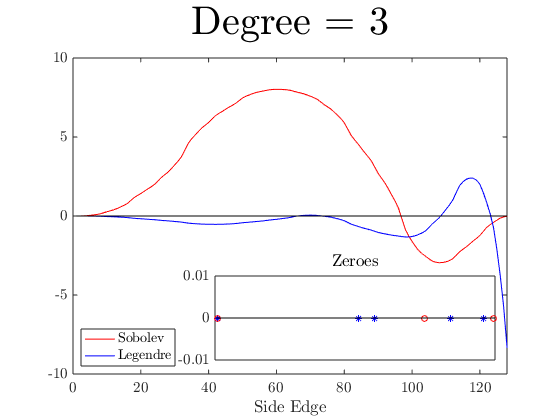}

  \caption{Interlacing patterns of $s_n$ and $p_n$ on the edges of SG --- top row: edge between $q_0$ and $q_1$, bottom row: edge between $q_1$ and $q_2$.}
  \label{fig: Interlacing}
\end{figure}

By the bottom edge, we mean the edge between $q_1$ and $q_2$ included. By a side edge, we mean the edge between $q_0$ and $q_i$ for $i=1,2$ including $q_0$ but not $q_i$ Our methodology in counting zeroes was rudimentary. We plotted the polynomials on $\Gamma_7$, which meant we had $129$ evaluation points on each edge. Then we simply computed the number of times the polynomial changed sign and concluded that by continuity the polynomial must have had a zero in the interval. There are two clear issues with this methodology: first, the polynomial may have more than one zero between two points of opposite sign. Secondly, this methodology cannot be used to compute non-simple zeroes. From the plots we observe that the edges look tangential to the polynomials at some points, implying the existence of high-multiplicity zeroes (HMZs). But we can only evaluate the polynomial at finitely many points. Consequently, sometimes, an HMZ can get trapped between evaluation points, so in our data it looks like the polynomial takes a non-zero value at the HMZ.

Using the above methodology of counting zeros, we plot the number of edge zeroes taken by Legendre and Sobolev polynomials against the degrees.  
\begin{figure}[h]
    \centering
    \includegraphics[width=0.7\textwidth]{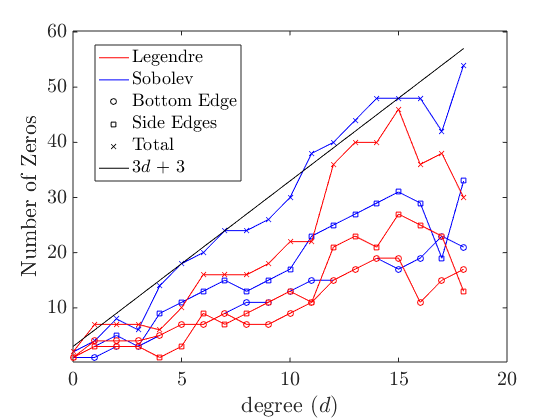}
    \caption{Number of zeroes of $p_d$ and $s_d$ on the edges of SG. Note that some high degree Sobolev polynomials have more zeroes than $\text{dim}\qty(\mathcal{H}_d) = 3d+3$ }
    \label{fig:my_label}
\end{figure}
We next plot Figure~\ref{fig: Antisymmetric plots} the symmetric Sobolev-Legendre OPs $\qty{s_n}$ obtained from the family of monomial corresponding to $k=1$. The construction of these OPs depends on Conjecture~\ref{conjecture:normal} which we have not been able to prove, but which numerical simulations suggest should be true. 

\begin{figure}[h]
\centering
    \begin{minipage}{.45\textwidth}
    \centering
    \includegraphics[width=\textwidth]{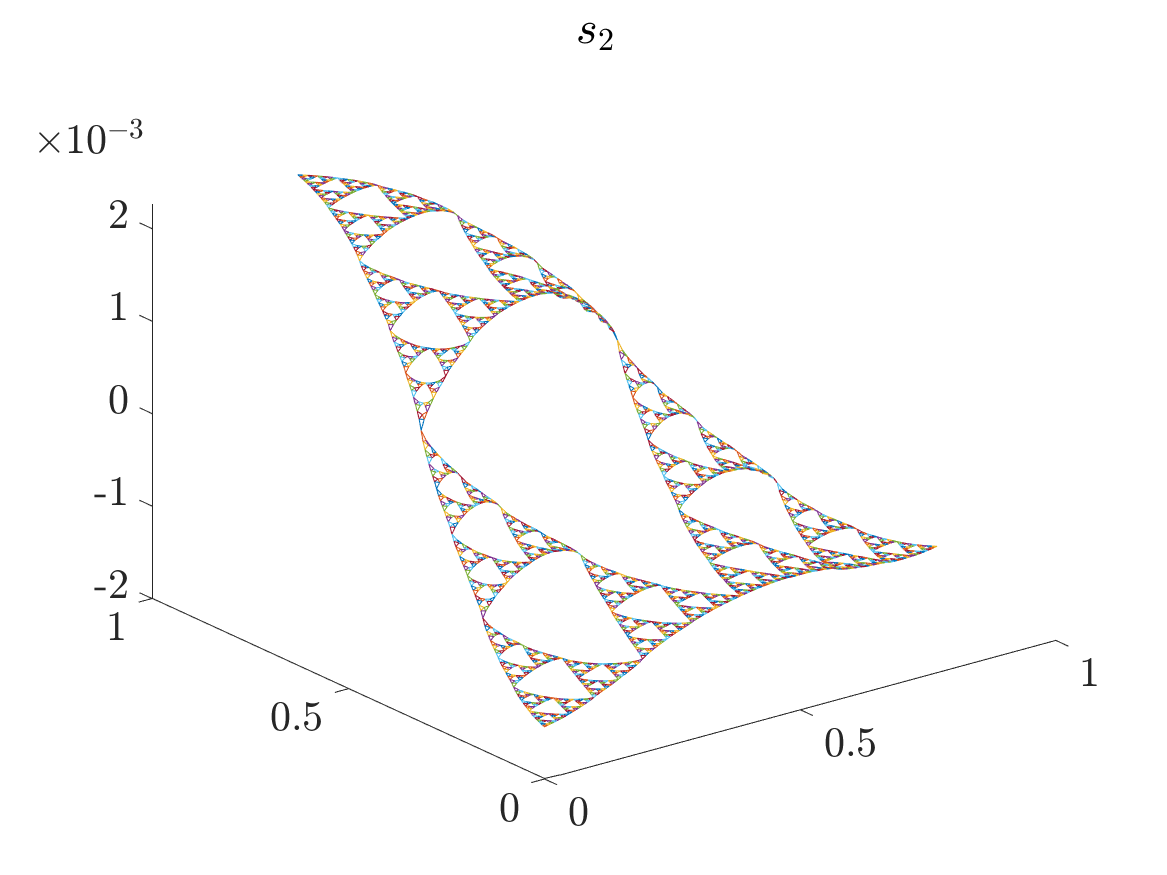}
    \end{minipage}
    \begin{minipage}{.45\textwidth}
    \centering
    \includegraphics[width=\textwidth]{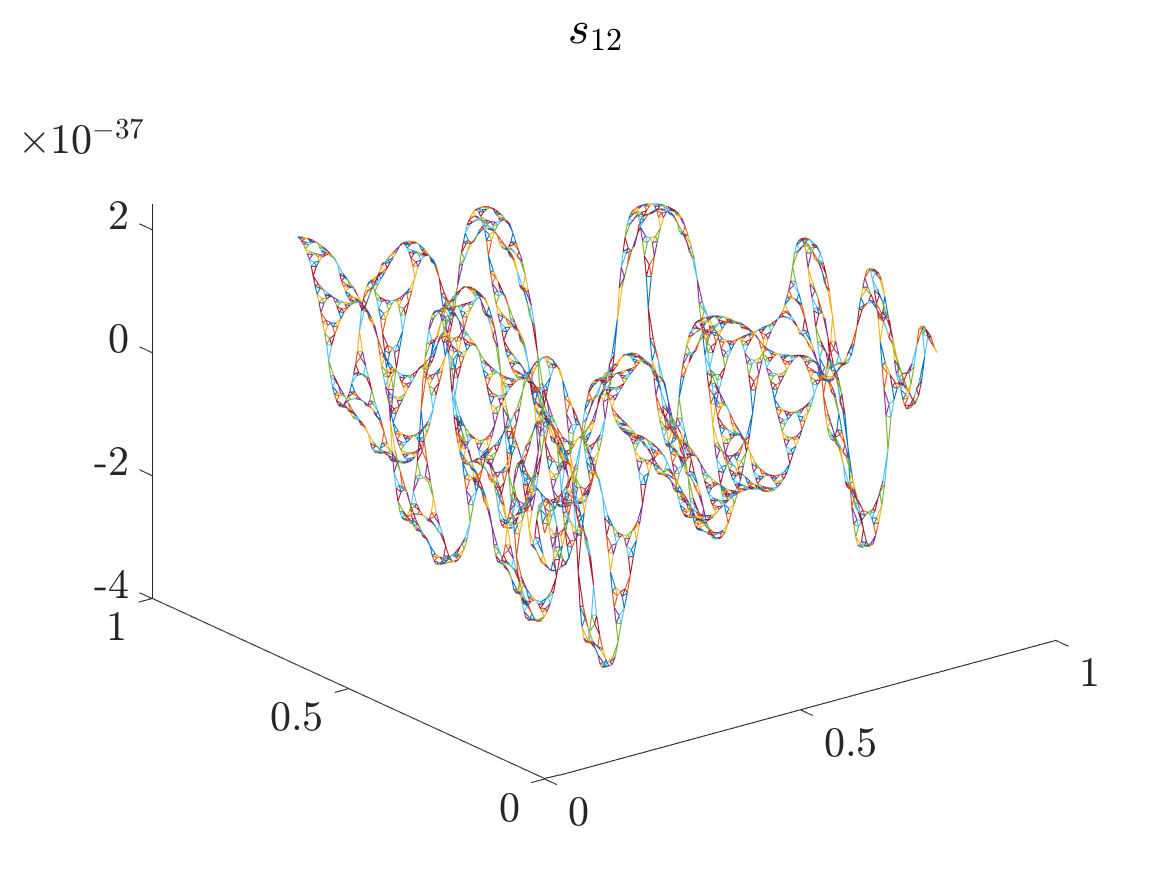}
    \end{minipage}
    \caption{Plots of the symmetric ($H^1$) Sobolev-Legendre orthogonal polynomials $s_j$ for $j=2,12$. Here we have $\chi = 1$ and $k=1$.}
            \label{fig: Antisymmetric plots}
\end{figure}

We also investigate the behavior of the SOPs when  $\chi \to \infty$ for the Sobolev inner product given by \eqref{eq: basic sobolev inner product} for $k=3$.
\begin{figure}[h]
    \centering
    \includegraphics[width=0.45\textwidth]{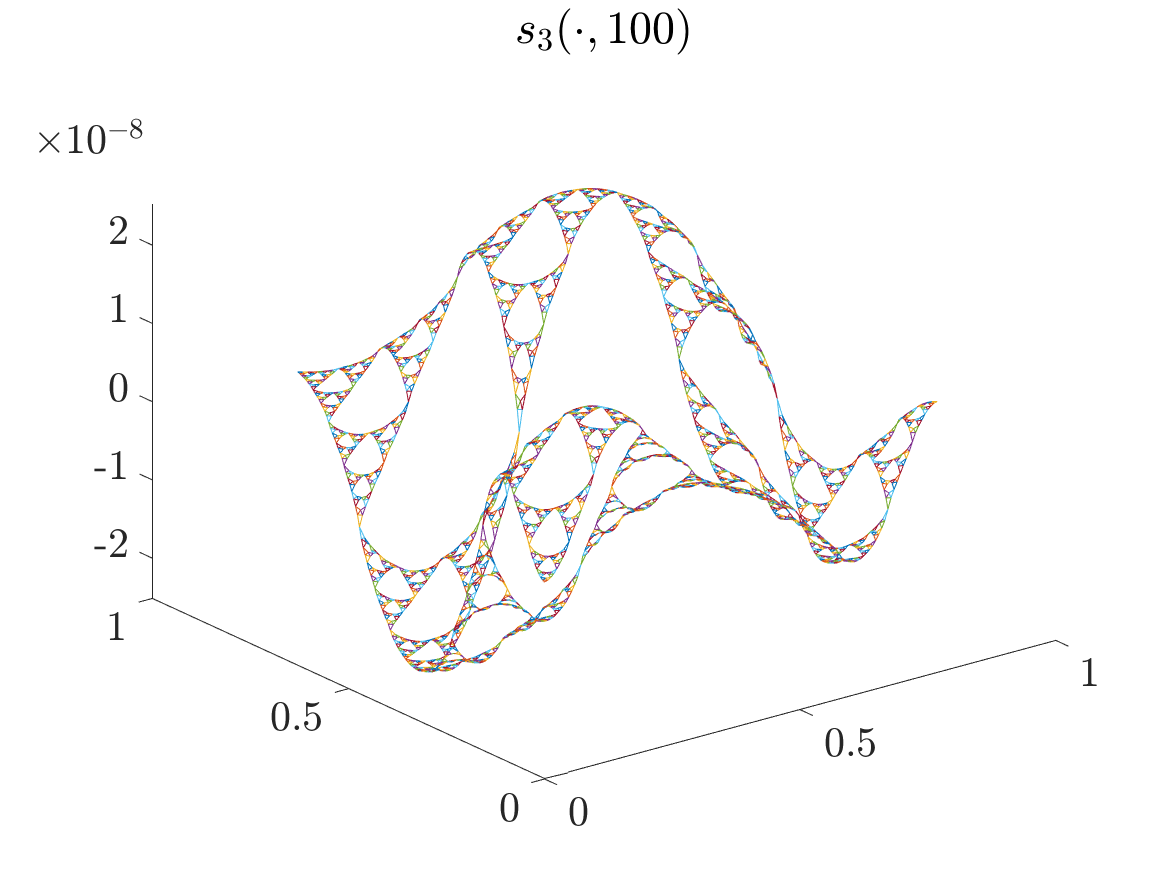}
    \includegraphics[width=0.45\textwidth]{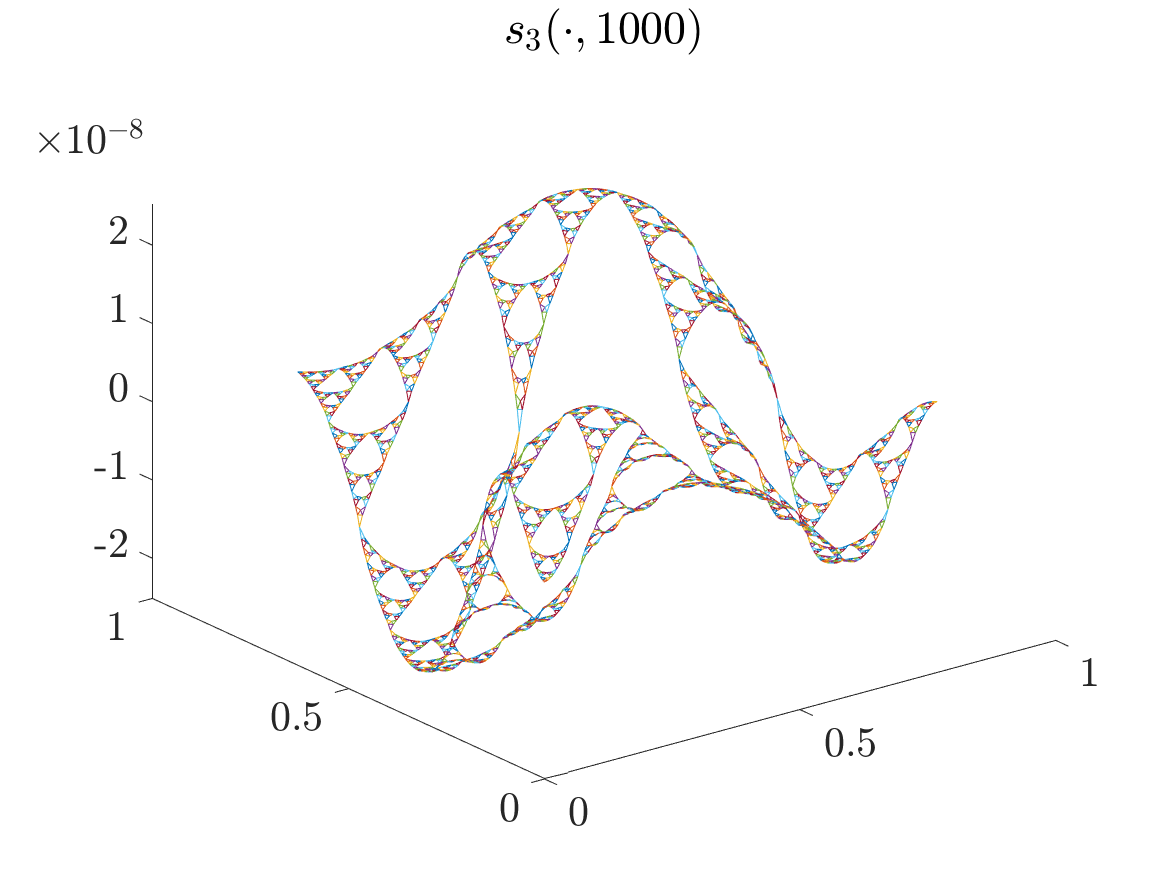}
    \includegraphics[width=0.45\textwidth]{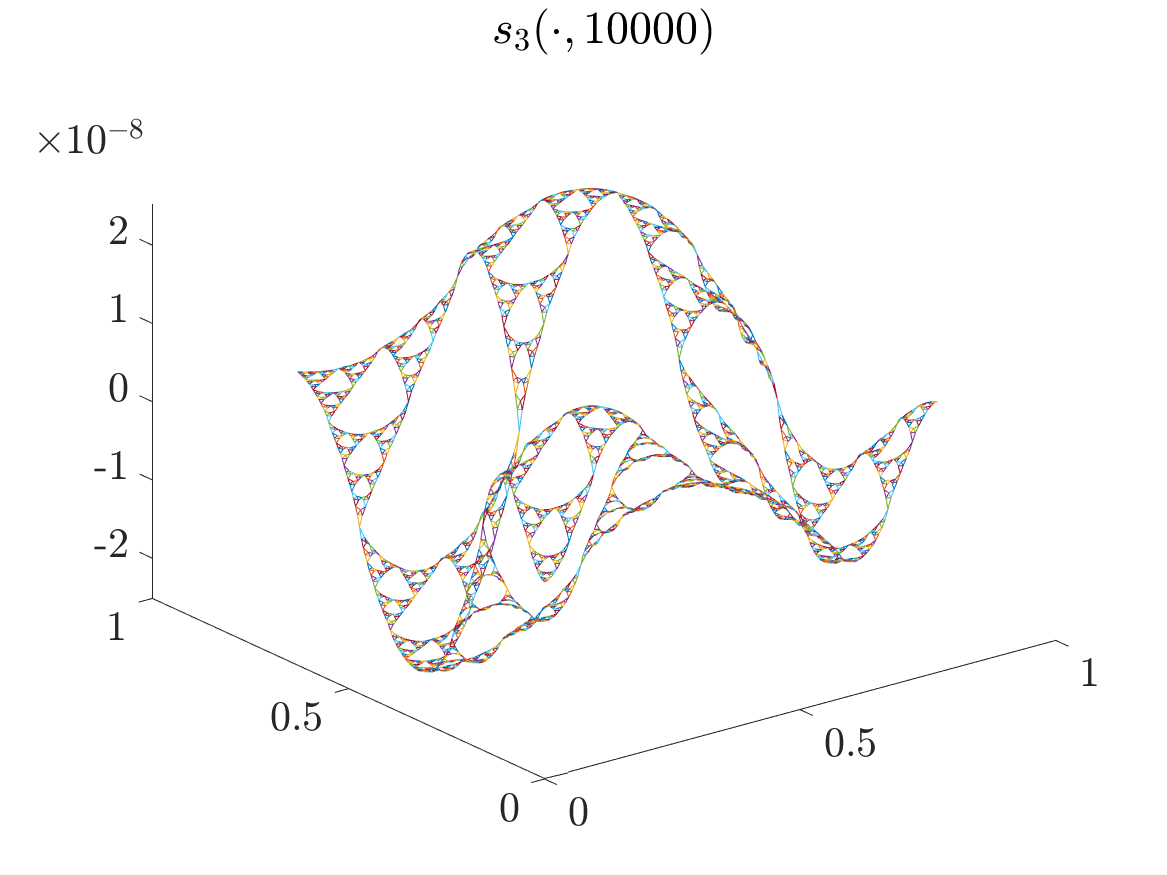}
    \includegraphics[width=0.45\textwidth]{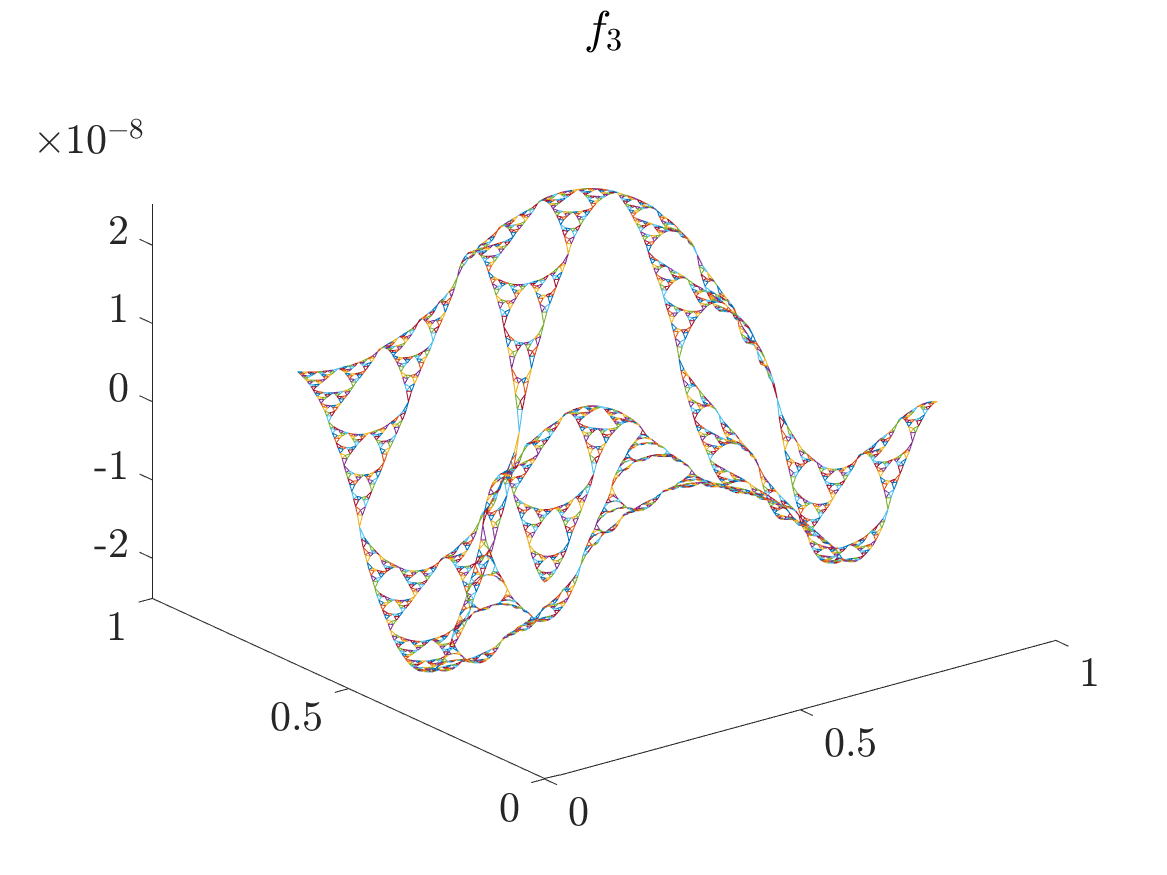}
    \caption{Studying $s_{3}\qty(\cdot,\chi)$ for $\chi = 10^m$ where $2\leq m \leq 4$. Note that here $k=3$. As $\chi \to \infty$ we can observe the convergent behaviour of the polynomial $s_{3}\qty(\cdot,\chi)$ to $f_3$ as outlined in Corollary \ref{Corollary: Convergence result for lambda}.}
    \label{fig:effect of lambda}
\end{figure}

Finally, we plot higher order Sobolev orthogonal polynomials for the $S^m$ inner product in Equation \eqref{eq:sobk}.

\begin{figure}[h]
    \centering
    \includegraphics[width=0.45\textwidth]{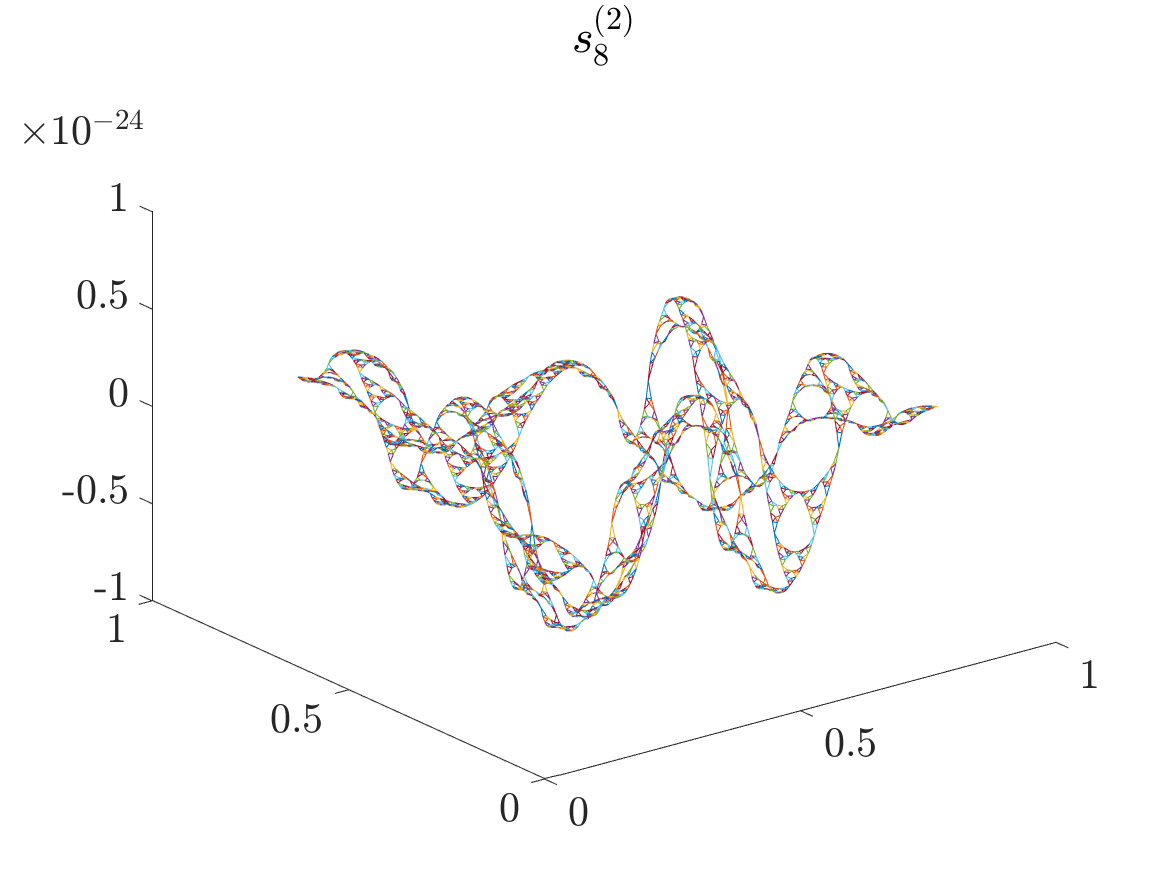}
    \includegraphics[width=0.45\textwidth]{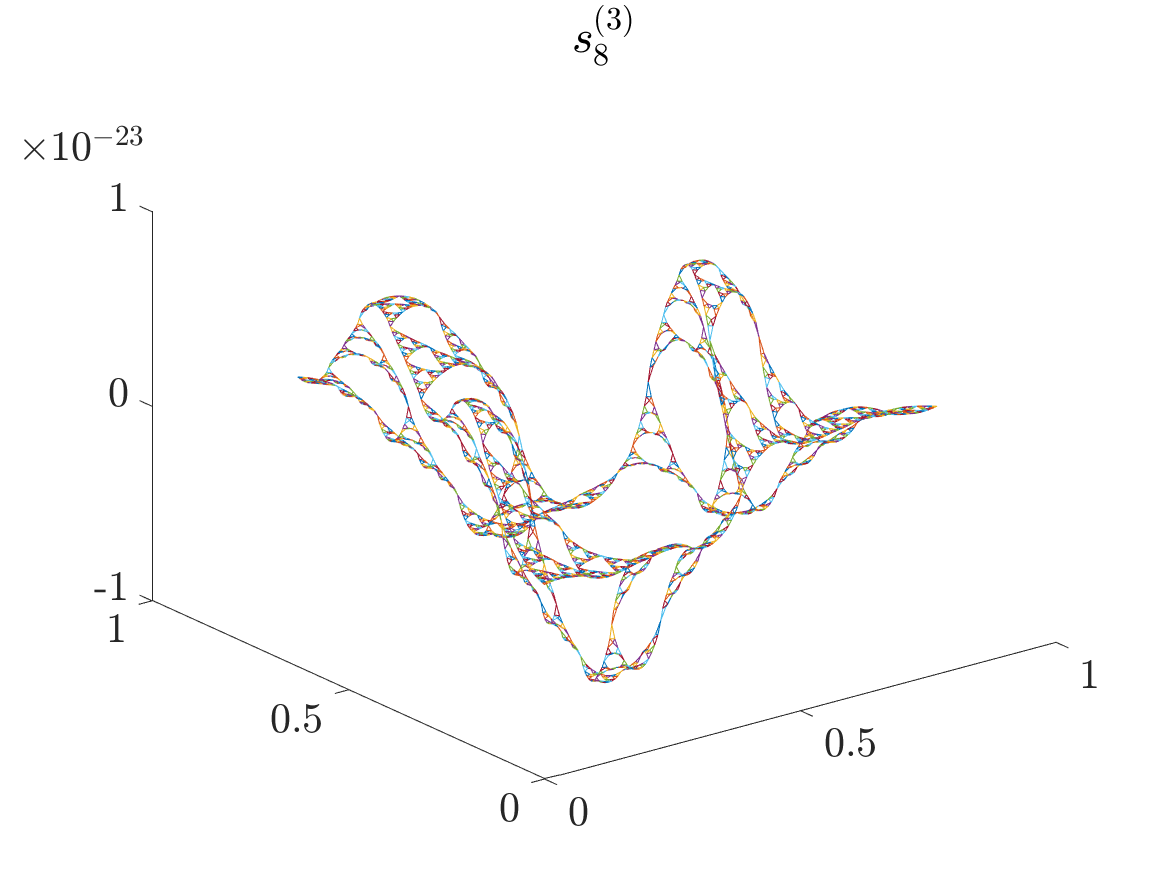}
    \includegraphics[width=0.45\textwidth]{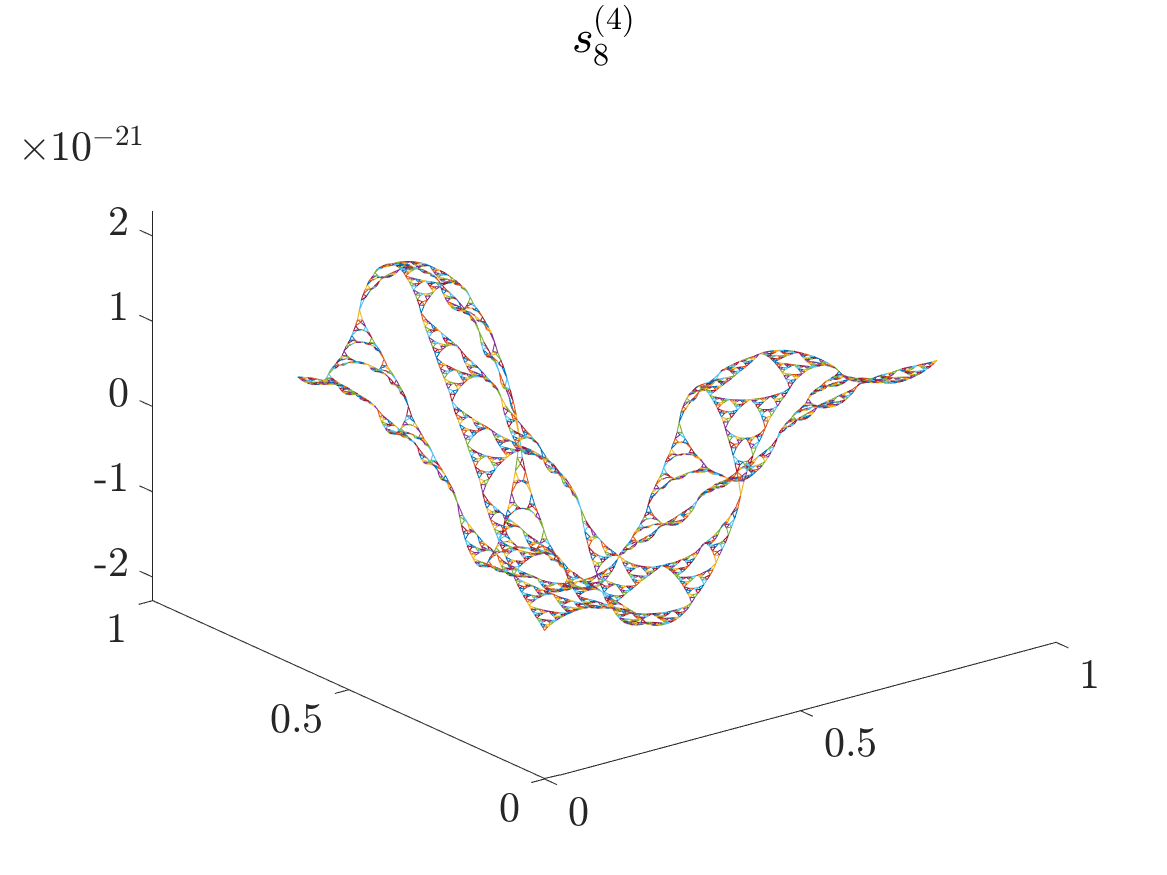}
    \includegraphics[width=0.45\textwidth]{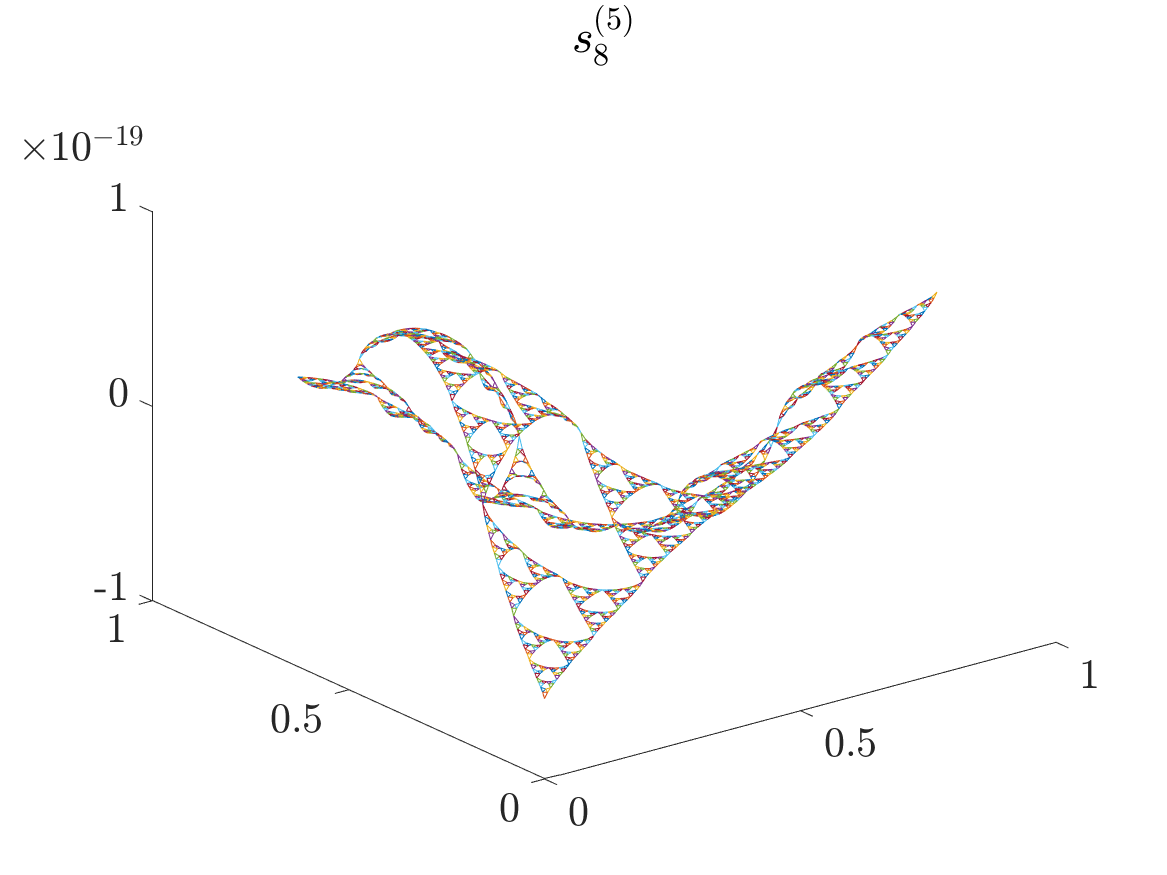}
    \caption{Visualizing $s_{8}^{\qty(m)}\qty(\cdot,X)$ where $m$ is the number of terms in the inner product in Equation \eqref{eq:sobk}. The above polynomial is the 8th degree polynomial obtained by applying the Gram-Schmidt process to $\qty{P_{j,3}}_{j\in \mathbb{N}}$ with the inner product in Equation \eqref{eq:sobk} for $2 \leq m \leq 5$. Note that we set $X = \textbf{1}_{m}$ and the energy and boundary terms in the inner product to be zero.}
    \label{fig:effect of m}
\end{figure}

We make two comments with regards to Figure \ref{fig:effect of m}. First, recall that fixing $\chi$ and a family of monomials $k$, the Sobolev orthogonal polynomials $\{s_{n,k}\qty(\cdot,\chi)\}$ converge to 0 as the degree $n$ increases. Furthermore, in \ref{fig:effect of lambda} the precision of $s_n\qty(\cdot, \chi)$ depletes with increase in $\chi$. This is not true for the parameter $m$: as $m$ increases, the eighth degree polynomial \textit{increases} in precision as we add more terms to the inner product. Second, consecutive orthogonal polynomials $s_{8}^{m}$ and $s_{8}^{m+1}$ are quite different in shape from each other. This may also be due to the use of small values of $m$. 

\subsection{Implementation and Code Design}
To generate plots of the orthogonal polynomials and test their properties numerically, we first compute the orthogonal polynomials directly from the monomial basis using the Gram-Schmidt process. In order to do this, we require the inner products between the monomials. Using the results of in \cite[Lemma 2.1]{OST}, we compute these inner products in terms of the coefficients $\alpha_j, \beta_j, \eta_j$, and $\alpha_j'$. To calculate the coefficients, we use the recursion relation from of \cite[Theorem B]{OST}. Moreover, all calculations are done in exact rational arithmetic and numerical values are only converted to floating point representations at the end of all calculations. Most of the computations involve recursions so the code has been memorised to improve efficiency. 

The values of $\alpha_j, \beta_j, \eta_j$, and $\alpha_j'$ are then used to calculate inner products between the monomial basis, and the results are stored in a Gram Matrix. Arbitrary polynomials of a certain family (value of $k$) are stored as their coordinate vectors in the monomial basis. Thus, inner products between arbitrary polynomials are computed as Euclidean inner products weighted by the Gram Matrix. This allows the Gram Schmidt algorithm to be implemented more efficiently.

For Sobolev-type inner products of any order, we first decompose inner products of Laplacians of monomials into inner products of monomials of lower degree and then build the Gram Matrix. For inner products involving the energy, the Gauss-Green formula is used to rewrite the inner product in terms of lower degree monomials.

The recursion relations for $k=2$ and $k=3$ involve computing the coordinates of the $f_{j,k}$ with respect to the monomial basis. To compute these, we use the results of Lemma~\ref{Lemma: Expansions for zeta}. This involves first computing the Legendre polynomials. Thus, we first compute the Legendre and Sobolev polynomials up to degree 2 using Gram-Schmidt. Then, we use the recursion relation in  \cite[Theorem 3.5]{OST} to calculate the remaining Legendre polynomials. Finally, we use the Legendre coordinates to compute $f_{j,k}$ and use those in the formula from \eqref{th:k23recurrence} to calculate the remaining Sobolev polynomials.

In the case that $k=1$, notice that the formula given in Lemma~\ref{Lemma: Expansions for zeta} for $f_{j,1}$ is very similar to the formula for $f_{j,2}$. The only differences are that $f_{j,1}$ has a nontrivial projection onto $P_{0,2}$ and $\zeta_{j, 1}$ depends on the $\alpha$ coefficients. However, we note that the prefactor of $\pm 2$ on the $\zeta_{j, 2}$ and $\zeta_{j, 3}$ are actually $-1/\beta_0$ and $-1/\gamma_0$ respectively. If we follow this pattern and create a function
$$ \Tilde{f}_{j+1,1} = \Tilde{\zeta}_{j, 1}P_{0,1} + \sum\limits_{l=0}^j\omega_{j,l} P_{l+1, k}\quad 
    \textrm{where} \quad \Tilde{\zeta}_{j, 1} = -\sum\limits_{l=0}^j\omega_{j,l}\alpha_{l+1},$$
we can actually use the recursion relation \eqref{th:k23recurrence} with $\tilde{f}$ in place of $f$ and $k=1$ to generate the Sobolev polynomials. 

Once we have the coordinate vectors of the orthogonal polynomials, we evaluate the monomials using the relations given in \cite{NSTY} and then generate the relevant plot. The plots in this paper were created in MATLAB for stylistic purposes.

\section{Applications}\label{sec: applications}
In this last section, we  explore two applications motivated by the study of zeroes of the Sobolev orthogonal polynomials. In Section~\ref{sec4.1} we consider the problem  of polynomial interpolation on $\sg$, while Section~\ref{sec4.2} treats the topic of quadrature rules for numerical integration on $\sg$.

\subsection{Polynomial Interpolation}\label{sec4.1}

We recall that for any set $\{(x_i, y_i)\}_{i=1}^{n+1}\subset \RR^2$, with $x_i\neq x_j$ for $i\neq j$, there exists a unique real polynomial $p$ of degree $n$ such that $p(x_i) = y_i$. Thus any $n$ degree polynomial belongs to a $d=n+1$ dimensional subspace of $P(\RR)$,  and it is uniquely determined by its values on $d$ distinct points. Motivated by this fact, we pose the following question on $\sg$: Is a degree $j$ polynomial uniquely defined by its values on finite set $E\subset \sg$? If so what is the cardinality of $E$ as compared to the degree $j$ of the polynomial or the dimension of the subspace in which it resides? To investigate this problem, it is enough to understand the zero set of a polynomial of degree $j$ on $\sg$. We further simplify the question by first studying the zeroes of degree $j$ anti-symmetric polynomials, which form a $j+1$-dimensional subspace of $\mathcal{H}_j$. 
For example, it appears that $s_{1,5}$ has $19$ zeroes in total on the bottom edge, and $27$ total side edge zeroes. Hence, $s_{1,5}$ seems to have at least $22$ zeroes in each half of $\sg$.
Let $x_0, \ldots x_{15}$ represent the first 16 of these zeroes. Then $s_{1,5}\qty(x_k)=0$ for $k=0, 1, \hdots, 15$ even though $s_{1,5}$ is not identically $0$. We can rewrite this to say the following matrix

\begin{align}
    \begin{bmatrix}
    P_{0,3}\qty(x_0) & \ldots & P_{15,3}\qty(x_0) \\
    \vdots & \ddots & \vdots \\
    P_{0,3}\qty(x_{15}) & \ldots & P_{15,3}\qty(x_{15})\\
    \end{bmatrix}
\end{align}

is singular. This statement is in stark contrast with polynomials on $\RR$, and seems to imply that the Fundamental Theorem of Algebra does not hold for polynomials on $SG$. 

Now we move to the general case. A general polynomial $f$ of degree $n$ is given by $$ f\qty(x) = \sum\limits_{j=0}^{n}\sum\limits_{k=1}^{3}c_{j, k}P_{j,k}\qty(x).$$ Consequently $f$ has $3n+3$ degrees of freedom (i.e it lies in a $3n+3$ dimensional subspace of $P(SG)$. We then ask if there exist sets of $3n+3$ distinct points on $\sg$ that uniquely determine every polynomial of degree $n$. This is equivalent to the existence of   sets of  $3n+3$ distinct points $E=\{x_1, \ldots, x_{3n+3}\}$ for which the following matrix is invertible

\begin{align}\label{interpolationmatrix}
    M_n = \begin{bmatrix}
    P_{0,1}\qty(x_1) & \ldots & P_{n,3}\qty(x_1) \\
    \vdots & \ddots & \vdots \\
    P_{0,1}\qty(x_{3n+3}) & \ldots & P_{n,3}\qty(x_{3n+3})\\
    \end{bmatrix}
\end{align}

We shall refer to the matrix  $M_n$  as the interpolation matrix on the set $\qty{x_1, \ldots, x_{3n+3}}$. It is easy to check that with the choice  $x_i=F_0^{\qty(i-1)}\qty(q_1)$ with $1\le i \le3n+3$,  then $\qty[P_{1,1}\qty(x_1)\ldots P_{1,1}\qty(x_{3n+3})]$ and   $\qty[P_{0,3}\qty(x_1)\ldots P_{0,3}\qty(x_{3n+3})]$ are colinear by scaling properties, hence the corresponding interpolation matrix is not invertible. 
We start by observing that when we take $n=1$ the points in $V_1$ completely determine any polynomial of degree $1.$  

\begin{lemma}\label{lem:interpolationv1} 
Let $g \in \mathcal{H}_1$. Then $g$ is determined uniquely by its values on $V_1$.
\end{lemma}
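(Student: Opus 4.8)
The plan is to recast the statement as the injectivity of the evaluation map $E\colon \mathcal{H}_1 \to \mathbb{R}^{V_1}$, $E(g) = (g(x))_{x\in V_1}$. Since $\dim\mathcal{H}_1 = 6 = |V_1|$, injectivity is equivalent to bijectivity and to the assertion that a degree-$1$ polynomial vanishing on all six points of $V_1$ is identically $0$; equivalently, the interpolation matrix formed from the monomial basis $P_{0,1},P_{0,2},P_{0,3},P_{1,1},P_{1,2},P_{1,3}$ evaluated on $V_1$ is invertible. Rather than expand this $6\times 6$ determinant head-on, I would exploit the fact that $E$ is equivariant for the dihedral symmetry group $D_3$ of $SG$ and block-diagonalize it.

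The structural input is a representation-theoretic decomposition of source and target. The sequence $0\to\mathcal{H}_0\to\mathcal{H}_1\xrightarrow{\Delta}\mathcal{H}_0\to 0$ is $D_3$-equivariant and splits by Maschke's theorem, so $\mathcal{H}_1\cong\mathcal{H}_0\oplus\mathcal{H}_0$; and $\mathcal{H}_0$, the space of harmonic functions, is $D_3$-isomorphic to the permutation module $\mathbb{R}^{V_0}$, which is $(\text{trivial})\oplus(\text{standard})$. On the target side $V_1$ is a union of two $D_3$-orbits, the three corners and the three edge midpoints, so $\mathbb{R}^{V_1}$ is two copies of $(\text{trivial})\oplus(\text{standard})$. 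The key observation is that \emph{neither space contains the sign representation}; hence both are isomorphic to $2\,(\text{trivial})\oplus 2\,(\text{standard})$, and by Schur's lemma $E$ decomposes into a map on the trivial-isotypic part and a map on the standard-isotypic part, each encoded by a $2\times 2$ matrix. Injectivity of $E$ thus reduces to showing two $2\times 2$ determinants are nonzero.

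For the trivial block I would use the basis $\{1,\ \phi\}$ of the $D_3$-invariant subspace, where $\phi := \mathcal{G}(1) = P_{1,1} + \tfrac13 P_{0,2}$ (from Lemma~\ref{Lemma: Expansions for zeta}) is the unique degree-$1$ polynomial with $\Delta\phi$ a nonzero constant and $\phi|_{V_0}=0$. Being $D_3$-invariant, $\phi$ takes a common value at all three midpoints, which the $F_0$-scaling relations evaluate as $\phi(F_0 q_2) = P_{1,1}(F_0 q_2) + \tfrac13 P_{0,2}(F_0 q_2) = \tfrac{1}{30} - \tfrac{1}{10} = -\tfrac{1}{15}$; with $\phi|_{V_0}=0$ this gives the block $\left(\begin{smallmatrix}1 & 0\\ 1 & -1/15\end{smallmatrix}\right)$, which is invertible. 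For the standard block I would read off the multiplicity matrix from the reflection-antisymmetric representatives $P_{0,3},P_{1,3}$ (one per copy of the standard representation) evaluated at $q_1$ and at $p = F_0 q_2$: using $\gamma_j = P_{j,3}(q_1)$ and $P_{j,3}(F_0 q_2) = -5^{-(j+1)}\gamma_j$, the block is $\left(\begin{smallmatrix}\gamma_0 & \gamma_1\\ -\gamma_0/5 & -\gamma_1/25\end{smallmatrix}\right)$ with determinant $\tfrac{4}{25}\gamma_0\gamma_1 = \tfrac{4}{25}\cdot\tfrac12\cdot\tfrac{1}{60} > 0$, since $\gamma_0=\tfrac12$ and $\gamma_1 = 3\alpha_2 = \tfrac{1}{60}$ by Lemma~\ref{lem:recabg}. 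Both blocks being nonsingular, $E$ is an isomorphism.

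The main obstacle, and the reason for routing the argument through symmetry, is that the monomial scaling identities are adapted to $F_0$ and hence only reach $q_0,q_1,q_2$ together with the two midpoints $F_0 q_1, F_0 q_2$ adjacent to $q_0$; they give no direct handle on the ``far'' midpoint $p_0 = F_1(q_2)$ opposite $q_0$, whose monomial values would otherwise be required to fill in the symmetric part of the matrix. The decomposition sidesteps this completely: the standard block is computed from $P_{0,3},P_{1,3}$ using only $F_0$-accessible points, and the trivial block needs only $\phi(p_0)$, which equals the accessible value $\phi(F_0 q_2)$ by $D_3$-invariance. The two points that must be verified carefully are the equivariant splitting together with the absence of the sign representation (both immediate from Maschke's theorem and the explicit $D_3$-action on $V_0$), and the claim that the antisymmetric pair $P_{0,3},P_{1,3}$ genuinely computes the standard-isotypic multiplicity matrix.
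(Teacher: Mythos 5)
Your proof is correct, but it takes a genuinely different route from the paper's. The paper argues directly: it splits off the harmonic part (which is determined by the values on $V_0$), writes the remainder in the easy basis as $c_0f_{1,0}+c_1f_{1,1}+c_2f_{1,2}$, and observes that the resulting $3\times 3$ matrix of values at the three midpoints is circulant with row sum $f_{1,0}(F_0q_1)+f_{1,1}(F_0q_1)+f_{1,2}(F_0q_1)=-1/15\neq 0$. You instead block-diagonalize the full $6\times 6$ evaluation map by $D_3$-symmetry into a trivial-isotypic and a standard-isotypic $2\times 2$ block. Both decompositions are sound, and your numerics check out: $\phi=\mathcal{G}(1)=P_{1,1}+\tfrac13 P_{0,2}$ is exactly $f_{1,0}+f_{1,1}+f_{1,2}$, so your trivial-block entry $\phi(F_0q_2)=-1/15$ is literally the paper's row sum, and the antisymmetric block $\tfrac{4}{25}\gamma_0\gamma_1=\tfrac{1}{750}$ is correct given $\gamma_0=\tfrac12$, $\gamma_1=3\alpha_2=\tfrac1{60}$. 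What the paper's argument buys is brevity; what yours buys is completeness and structure. Indeed, a real $3\times 3$ circulant with entries $a,b,c$ has determinant $(a+b+c)\cdot\tfrac12\left((a-b)^2+(b-c)^2+(c-a)^2\right)$, so a nonzero row sum alone does not force invertibility --- one also needs the three entries not all equal (true here, since $f_{1,0}$ and $f_{1,1}$ are not related by the reflection fixing $F_0q_1$, but unremarked in the paper); your standard-block computation supplies precisely this missing half of the verification. Your approach also isolates why only $F_0$-accessible points are needed, and the isotypic decomposition is the kind of bookkeeping that could plausibly organize the higher-degree interpolation questions raised in Lemma \ref{lemma:solinterpolation}, at the cost of invoking Maschke/Schur machinery the paper avoids.
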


\begin{proof}
The proof is by direct computation of the interpolation matrix. We switch to the easy basis $\qty{f_{j,k}}$ where $j=0,1$ and $k=0,1,2$. Suppose $g|_{V_0} \equiv 0$. Then, $g = c_0f_{1,0} + c_1f_{1,1} + c_2f_{1,2}$. Now suppose $g|_{V_1} \equiv 0$. Then to check whether $c_i = 0$ we need to check the invertibility of

$$ \begin{bmatrix}
f_{1,0}\qty(F_0q_1) & f_{1,1}\qty(F_0q_1) & f_{1,2}\qty(F_0q_1) \\
f_{1,0}\qty(F_1q_2) & f_{1,1}\qty(F_1q_2) & f_{1,2}\qty(F_1q_2) \\
f_{1,0}\qty(F_2q_0) & f_{1,1}\qty(F_2q_0) & f_{1,2}\qty(F_2q_0)\\
\end{bmatrix}$$

But this is a circulant matrix and $f_{1,0}\qty(F_0q_1) + f_{1,1}\qty(F_0q_1) + f_{1,2}\qty(F_0q_1) = -1/15 \neq 0$ so it is invertible. 
\end{proof}

Unfortunately, the proof given for Lemma~\ref{lem:interpolationv1} does not generalize to higher order polynomials. However, under an assumption we have not been able to establish, the following set of $3n+3$ points uniquely determined any polynomial of degree $n$ on $\sg$.

\begin{lemma}\label{lemma:solinterpolation}
Suppose that each term in the  sequence  $\beta_j$ defined in Lemma~\ref{lem:recabg} never vanishes. 
For any $n\ge 0$, take $x_i=F_0^{\qty(i-1)}\qty(q_1)$ for $1\le i\le 2n+2$, and $x_i=F_0^{\qty(i-2n-3)}\qty(q_2)$ for $2n+3\le i \le 3n+3$. Then the matrix \eqref{interpolationmatrix} is invertible.
\end{lemma}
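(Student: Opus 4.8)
The plan is to show that the square matrix $M_n$ has trivial kernel, which for a square matrix is equivalent to invertibility. A vector in the kernel is a coefficient array $\qty{c_{j,k}}_{0\le j\le n,\,1\le k\le 3}$ for which $f:=\sum_{j,k}c_{j,k}P_{j,k}$ vanishes at all $3n+3$ points, and I must deduce $c_{j,k}=0$ for all $j,k$. First I would exploit the reflection symmetry swapping $q_1$ and $q_2$ together with the stated scaling relations. Writing $m$ for the iteration index, the scaling properties give $P_{j,1}(F_0^m q_1)=5^{-jm}\alpha_j$, $P_{j,2}(F_0^m q_1)=(3/5)^m5^{-jm}\beta_j$, and $P_{j,3}(F_0^m q_1)=5^{-(j+1)m}\gamma_j$; since the $k=1,2$ families are symmetric and the $k=3$ family antisymmetric across the axis through $q_0$ (and $F_0$ commutes with the reflection), the values at $F_0^m q_2$ agree for $k=1,2$ and are negated for $k=3$. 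Hence, with $S(m):=\sum_j c_{j,1}\alpha_j 5^{-jm}+\sum_j c_{j,2}\beta_j (3/5)^m 5^{-jm}$ and $A(m):=\sum_j c_{j,3}\gamma_j 5^{-(j+1)m}$, we have $f(F_0^m q_1)=S(m)+A(m)$ and $f(F_0^m q_2)=S(m)-A(m)$.

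Next I would read off the equations from the chosen points. The $q_2$-points cover $m=0,\dots,n$ and the $q_1$-points cover $m=0,\dots,2n+1$; for each $m\in\{0,\dots,n\}$ both relations hold, so adding and subtracting yields $S(m)=0$ and $A(m)=0$ for $0\le m\le n$. The antisymmetric equations $A(m)=0$, $0\le m\le n$, form a system whose coefficient matrix is the generalized Vandermonde $\big((5^{-(j+1)})^m\big)_{0\le m,j\le n}$; the bases $5^{-1},\dots,5^{-(n+1)}$ are pairwise distinct, so this matrix is nonsingular and $c_{j,3}\gamma_j=0$ for all $j$. Since $\gamma_j=3\alpha_{j+1}$, I would record that $\alpha_j>0$ for every $j$ by a strong induction on the recursion of Lemma~\ref{lem:recabg}: the base cases $\alpha_0=1,\alpha_1=\tfrac16$ are positive, and for $j\ge2$ both the factor $4/(5^j-5)$ and the convolution sum $\sum_{l=1}^{j-1}\alpha_{j-l}\alpha_l$ are positive. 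Thus $\gamma_j\neq0$ and $c_{j,3}=0$ for all $j$.

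With $A\equiv0$ now established, the remaining $q_1$-equations for $m=n+1,\dots,2n+1$ reduce to $S(m)=0$, so in total $S(m)=0$ for all $m=0,\dots,2n+1$: that is $2n+2$ equations in the $2n+2$ unknowns $c_{j,1}\alpha_j$ and $c_{j,2}\beta_j$. I would view $S(m)$ as a Dirichlet (exponential) sum $\sum_k y_k\lambda_k^m$ with bases $\{5^{-j}:0\le j\le n\}\cup\{3\cdot5^{-(j+1)}:0\le j\le n\}$, using $(3/5)^m5^{-jm}=3^m5^{-(j+1)m}=(3\cdot5^{-(j+1)})^m$. The $(2n+2)\times(2n+2)$ matrix $(\lambda_k^m)$ is again Vandermonde, hence nonsingular exactly when the $2n+2$ bases are pairwise distinct. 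Powers of $5$ are distinct within each family, and a cross-collision $5^{-j}=3\cdot5^{-(j'+1)}$ would force $5^{\,j'-j+1}=3$, which is impossible since $3$ is not a power of $5$. Therefore all bases are distinct, $c_{j,1}\alpha_j=c_{j,2}\beta_j=0$ for every $j$, and using $\alpha_j>0$ together with the hypothesis $\beta_j\neq0$ we get $c_{j,1}=c_{j,2}=0$. Hence $f\equiv0$ and $M_n$ is invertible.

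The main obstacle I anticipate is the symmetric block. One must observe that the edge values at $q_1$ and $q_2$ contribute precisely the $2n+2$ equations matching the symmetric unknowns, and, more crucially, that the two geometric families of rates $\{5^{-j}\}$ and $\{3\cdot5^{-(j+1)}\}$ are disjoint. The argument reduces to a clean Vandermonde computation only because of this number-theoretic disjointness (that $3$ is not an integer power of $5$), rather than any analytic estimate; the sole additional inputs beyond the scaling and symmetry relations are the positivity of $\alpha_j$ and the standing hypothesis that each $\beta_j$ is nonzero.
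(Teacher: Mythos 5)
Your proof is correct and follows essentially the same route as the paper's: assume a nonzero kernel vector, use the reflection symmetry at the doubled points $F_0^m q_1$, $F_0^m q_2$ ($0\le m\le n$) to kill the antisymmetric part via a Vandermonde determinant in the bases $5^{-(j+1)}$, then handle the symmetric part with a second Vandermonde system requiring $\beta_j\neq 0$. You in fact spell out more carefully than the paper the step it dismisses as ``a similar argument,'' namely that the $2n+2$ bases $\{5^{-j}\}\cup\{3\cdot 5^{-(j+1)}\}$ are pairwise distinct because $3$ is not a power of $5$.
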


\begin{proof}
Suppose not, then there exists a  non-zero vector $\qty[a_1,\ldots,a_{3n+3}]$ such that $f\qty(x_i)=0$ where $f:= \sum\limits^{i=n+1}_{i=1}a_iP_{i-1,1}+a_{n+1+i}P_{i-1,2}+a_{2n+2+i}P_{i-1,3} = f_1 + f_2 + f_3$ where $f_k = \sum\limits^{i=n+1}_{i=1}a_{(k-1)(n+1)+i}P_{i-1,k}$. Note $f\qty(F_0^{\qty(i-1)}\qty(q_1))=f\qty(F_0^{\qty(i-1)}\qty(q_2))=0$ for $1\le i \le n+1$ so symmetry, we have $f_3\qty(F_0^{\qty(i-1)}\qty(q_1))=0$ for $1\le i \le n+1$. But notice the determinant of
$$M_n = \begin{bmatrix}
    P_{0,3}\qty(x_1) & \ldots & P_{n,3}\qty(x_1) \\
    \vdots & \ddots & \vdots \\
    P_{0,3}\qty(x_{n+1}) & \ldots & P_{n,3}\qty(x_{n+1})
\end{bmatrix} = \begin{bmatrix}
    \gamma_0 & \ldots & \gamma_{n} \\
    \vdots & \ddots & \vdots \\
    5^{-n}\gamma_0 & \ldots & 5^{-n^2}\gamma_n
\end{bmatrix}$$
is the product of some $\gamma_i$ (which are all positive) and the determinant of a Vandermonde matrix, which is $\prod\limits_{1\le i<j\le n+1}\qty(5^{-j}-5^{-i})$ by \cite[Equation 2.6]{NSTY}. It follows that $f_3=0$, and so $f$ has no anti-symmetric part. 

Using a similar argument along with \cite[Equations 2.4-2.6]{NSTY},  we can establish that all the coefficients in  $f=f_1 + f_2$ must vanish. This comes down to proving that the determinant of certain Vandermonde matrices are non zero. It is here that we need the fact that $\beta_j\neq0$. We note that values of $\beta_j$ for $j=0, \hdots, 20$ were given in~\cite[Table 1]{NSTY}. In addition, by~\cite[Theorem 2.9]{NSTY}, $\lim\limits_{j \to \infty}\qty(-\lambda_2)^j\beta_j= c\neq 0$, where $\lambda_2=135.572126995788...$ It follows that for $j$ large, we could assume $\beta_j\neq 0.$
\end{proof}

Finally, we note that in general the sets of points that could be used to uniquely define a polynomial is rather ``thin'' in the following sense. We first introduce the following notion that appeared in the investigation of  similar interpolation problems in the context of finite dimensional subspaces of $C\qty(\Omega)$ where $\Omega \subset \RR^n$ in \cite{M}. 

\begin{definition} Fix $n\geq 2$. 
A subset  $I \subseteq SG$  is called   $n$-interpolatory set of $SG$ if for any subset $N \subseteq I$ such that $|N| = 3n + 3$, $M_n$ is invertible on $N$.
\end{definition}

The next result, following a trick of Haar \cite{Meij}, shows that $n$-interpolatory sets on $SG$ have empty interior.

\begin{proposition}\label{prop: nint sets are thin}
For $n\geq 2,$ let $I_n \subseteq SG$ be an $n$-interpolatory set. Then $I_n$ cannot contain the three edges of a cell with $3n + 1$ additional points. In particular, it cannot contain a cell.
\end{proposition}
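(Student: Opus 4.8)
The plan is to run the classical Haar trick (in the spirit of Mairhuber--Curtis, and as used in \cite{Meij, M}): from the $n$-interpolatory hypothesis the interpolation determinant never vanishes on distinct points, yet I will exhibit a continuous motion of the points that forces it to change sign, a contradiction.

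First I would record the topology of the configuration. Let $C$ be the union of the three edges of the cell. Each edge is an arc and two distinct edges meet only at a shared corner, so $C$ is homeomorphic to the circle $S^{1}$; fix a homeomorphism $\phi\colon S^{1}\to C$, writing $S^{1}=\mathbb{R}/2\pi\mathbb{Z}$. By hypothesis $C\subseteq I_n$, and $I_n$ contains $3n+1$ further points $w_1,\dots,w_{3n+1}$ that are additional to the edges, hence none of them lies on $C$. These $w_i$ will be held fixed.

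Next I would build the swapping homotopy. Put $p=\phi(0)$ and $q=\phi(\pi)$, and for $\theta\in[0,\pi]$ set $a(\theta)=\phi(\theta)$ and $b(\theta)=\phi(\theta+\pi)$. The parameters of $a(\theta)$ and $b(\theta)$ always differ by $\pi$, so $a(\theta)\neq b(\theta)$; since both lie on $C$ while every $w_i$ lies off $C$, the $3n+3$ points $a(\theta),b(\theta),w_1,\dots,w_{3n+1}$ are pairwise distinct for each $\theta$. As all of them lie in $I_n$, the $n$-interpolatory property makes the interpolation matrix on this ordered list invertible, so $D(\theta):=\det M_n$ (with rows indexed by $a(\theta),b(\theta),w_1,\dots,w_{3n+1}$ in that order) satisfies $D(\theta)\neq 0$ throughout. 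Because each $P_{j,k}$ is continuous on $SG$ and $a(\theta),b(\theta)$ move continuously, $\theta\mapsto D(\theta)$ is continuous.

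Finally I would extract the contradiction and deduce the corollary. At $\theta=\pi$ one has $a(\pi)=\phi(\pi)=q$ and $b(\pi)=\phi(2\pi)=p$, so the point list at $\theta=\pi$ is the list at $\theta=0$ with its first two entries interchanged; interchanging two rows gives $D(\pi)=-D(0)$. Thus $D$ is continuous on $[0,\pi]$, nowhere zero, yet $D(0)$ and $D(\pi)$ have opposite signs, contradicting the intermediate value theorem. This shows $I_n$ cannot contain the three edges together with $3n+1$ additional points. For the last sentence, a cell is a homeomorphic copy of $SG$ and therefore contains its three edges as well as infinitely many points off those edges; in particular it contains $3n+1$ such points, so if a cell were contained in $I_n$ we would contradict what was just proved. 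I expect the construction of the distinct-point homotopy to be the crux: the rotation by $\pi$ swaps $a$ and $b$ without collision precisely because they are confined to the circle $C$ while the remaining $3n+1$ points sit off $C$ — this is exactly where the presence of the extra off-edge points is indispensable.
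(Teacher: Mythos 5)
Your proof is correct and is essentially the paper's own argument: both run Haar's swapping trick, continuously interchanging two points confined to the closed curve formed by the cell's three edges while the $3n+1$ off-edge points stay fixed, so the interpolation determinant must change sign without vanishing. The only cosmetic difference is that you parametrize the swap as a rotation of the circle $C$ by $\pi$, whereas the paper traverses two disjoint paths between two vertices of the cell; your version makes the non-collision of the moving points slightly more transparent.
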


\begin{proof}
Suppose there is a cell $C$ such that all the three edges of $C$ lie in $I_n$. Let $I = \qty{a,b}$  where $a$ and $b$ are vertices in $C$. There exist two different paths $\gamma$ and $\eta$ joining $a$ and $b$. We may parametrize these paths as $\gamma\qty(t)$ and $\eta\qty(t)$ where $0 \leq t \leq 1$ such that $\gamma\qty(1) = \eta\qty(0) = a$ and $\gamma\qty(0) = \eta\qty(1) = b$.
Let $B = \qty{x_1, \ldots x_{3n + 1}}\subset I_n$, be any set of $3n + 1$ points not on $\gamma \cup \eta$. Thus, $S = B \cup I$ is a set of $3n + 3$ points and so $M_n$ is invertible on $S$ since $I_n$ is $n$-interpolatory.  Now for every $t$, $M_n$  stays invertible on $\qty{\eta\qty(t),\gamma\qty(t)} \cup B$  as we picked $B$ to not coincide with $\gamma$ and $\eta$. Traversing the two paths from $t=0$ to $t=1$ switches the rows of $M_n$ and hence the sign of its determinant. Thus, the determinant must vanish for some $T \in \qty(0,1)$. 
Consequently, $M_n$ is not invertible in the set $B \cup \qty{\gamma\qty(T), \eta\qty(T) } \subseteq I_n$, resulting in a contradiction.
\end{proof}

Interpolation of functions on graphs has been investigated in a variety of settings. We refer to \cite{Pesenson09, WNW20} and the references therein for more details.

\subsection{Quadrature on SG}\label{sec4.2}
In \cite{SU}, the authors prove a quadratic error bound for Simpson's rule on $\sg$, by interpolating a function using quadratic splines at level $m$. More generally, in analogy with Newton-Cotes rules on $\RR$, we may consider computing the integral of $f$ on $\sg$ by interpolating it on $V_m$ using splines of order $n$. But, as was discussed in the previous section, we may not be able to interpolate a function uniquely using splines of order $n$ using any selection of $3n + 3$ points on $V_m$ (Simpson's rule is a lucky case where $|V_1| = \text{dim}\qty(\mathcal{H}_1)$). However, the particular solution for the interpolation problem in Lemma~\ref{lemma:solinterpolation} allows us to prove the following estimate for a general spline quadrature rule: 

\begin{theorem}

\label{thm:nharmquad} Let $\{x_i\}_{i=1}^{3n+3}$ be defined as in Lemma~\ref{lemma:solinterpolation}. Given a quadrature rule $I_n^n\qty(f):=\sum\limits_{i=1}^{3n+3}\omega_if\qty(x_i)$ which exactly integrates functions in $\mathcal{H}_n$. Let $I_n^m\qty(f):=\sum\limits_{|\omega|=m-n}I^n_n\qty(f\circ F_\omega)$. Then we have the following estimate on the quadrature error:
\begin{align}
    \qty|I_n^m\qty(f) - \int_{SG}f\dd{\mu}|\leq c_1\qty(n)5^{-\qty(n+1)m}\|\Delta^{\qty(n+1)}f\|_\infty.
\end{align}
\end{theorem}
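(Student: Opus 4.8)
The plan is to localise: decompose $SG$ into cells of level $m-n$, reduce the global error to a sum of errors of the base rule $I_n^n$ on each cell, bound a single cell error by $\|\Delta^{n+1}\qty(\cdot)\|_\infty$ of the pulled-back integrand using exactness on $\mathcal{H}_n$ together with boundedness of the Green operator $\mathcal{G}$, and finally read off the decay from the scaling of $\Delta^{n+1}$ under the $F_\omega$. Throughout, the existence of weights $\qty{\omega_i}$ making $I_n^n$ exact on the $3n+3$-dimensional space $\mathcal{H}_n$ is exactly the unisolvence furnished by Lemma~\ref{lemma:solinterpolation}.

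\textbf{Decomposition.} Set $\ell = m-n$. By self-similarity of $\mu$, $\int_{SG} f \dd{\mu} = \sum_{|\omega|=\ell} 3^{-\ell}\int_{SG}\qty(f\circ F_\omega)\dd{\mu}$, while the composite rule applies $I_n^n$ to each pullback $g_\omega := f\circ F_\omega$. Matching the two cell by cell expresses the total error as $\sum_{|\omega|=\ell}3^{-\ell} L\qty(g_\omega)$, where $L\qty(g):= I_n^n\qty(g)-\int_{SG} g\dd{\mu}$ is the base error functional on $C\qty(SG)$. Since there are $3^{\ell}$ words of length $\ell$, the weights $3^{-\ell}$ cancel the cell count, so everything reduces to a single bound on $L$ together with the scaling of $\|\Delta^{n+1}g_\omega\|_\infty$; one should keep track of the normalisation so that this factor $3^{-\ell}$ is indeed present.

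\textbf{Single-cell bound.} The functional $L$ is bounded on $C\qty(SG)$ with $\|L\|\le 1+\sum_{i=1}^{3n+3}|\omega_i|$, and by hypothesis $L$ vanishes on $\mathcal{H}_n$. Given $g$ with $\phi:=\Delta^{n+1}g$ continuous, I would split $g = h+\mathcal{G}^{n+1}\phi$, where $\mathcal{G}$ is the Green operator of \eqref{eq: Green's operator explicit formula}. Since $\Delta\mathcal{G}=\mathrm{id}$, the remainder $h = g-\mathcal{G}^{n+1}\phi$ satisfies $\Delta^{n+1}h = 0$, i.e. $h\in\mathcal{H}_n$, so $L\qty(h)=0$ and $L\qty(g)=L\qty(\mathcal{G}^{n+1}\phi)$. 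As the Green kernel $G$ is bounded on $SG\times SG$, $\mathcal{G}$ is bounded on $C\qty(SG)$, giving $\|\mathcal{G}^{n+1}\phi\|_\infty\le\|\mathcal{G}\|^{n+1}\|\phi\|_\infty$ and hence $|L\qty(g)|\le C_n\|\Delta^{n+1}g\|_\infty$ with $C_n = \qty(1+\sum_i|\omega_i|)\|\mathcal{G}\|^{n+1}$.

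\textbf{Scaling and summation.} Iterating $\Delta\qty(u\circ F_i)=\tfrac15\qty(\Delta u)\circ F_i$ (consistent with the monomial scaling quoted from \cite{NSTY}) gives $\Delta^{n+1}\qty(f\circ F_\omega)=5^{-\qty(n+1)\ell}\qty(\Delta^{n+1}f)\circ F_\omega$, so $\|\Delta^{n+1}g_\omega\|_\infty\le 5^{-\qty(n+1)\ell}\|\Delta^{n+1}f\|_\infty$. Combining the three ingredients, $|I_n^m\qty(f)-\int_{SG}f\dd{\mu}|\le \sum_{|\omega|=\ell}3^{-\ell}C_n 5^{-\qty(n+1)\ell}\|\Delta^{n+1}f\|_\infty = C_n 5^{-\qty(n+1)\ell}\|\Delta^{n+1}f\|_\infty$, and with $\ell=m-n$ one has $5^{-\qty(n+1)\ell}=5^{n\qty(n+1)}5^{-\qty(n+1)m}$, yielding the claim with $c_1\qty(n)=C_n 5^{n\qty(n+1)}$. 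The main obstacle is the single-cell bound: one must justify the decomposition $g=h+\mathcal{G}^{n+1}\Delta^{n+1}g$ with $h\in\mathcal{H}_n$ and the uniform boundedness of $\mathcal{G}^{n+1}$ on $C\qty(SG)$ (equivalently $\sup_x\int_{SG}|G\qty(x,y)|\dd{\mu}\qty(y)<\infty$), which is where the entire $n$-dependence of $c_1\qty(n)$ is packaged; the only other delicate point is confirming the normalisation of the composite rule so that the $3^{-\ell}$ factor cancels the number of cells.
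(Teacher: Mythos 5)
Your proposal is correct and follows essentially the same route as the paper: decompose into level-$(m-n)$ cells, use exactness on $\mathcal{H}_n$ together with boundedness of the Green operator for the single-cell bound, and conclude via the scaling $\Delta^{n+1}(f\circ F_\omega)=5^{-(n+1)(m-n)}(\Delta^{n+1}f)\circ F_\omega$. The only (cosmetic) difference is in the single-cell step, where the paper bounds $\|g_\omega-f\circ F_\omega\|_\infty$ for the $\mathcal{H}_n$-interpolant $g_\omega$ via a finite-dimensionality argument, whereas you apply the error functional $L$ directly to $g=h+\mathcal{G}^{n+1}\Delta^{n+1}g$ with $h\in\mathcal{H}_n$ and use $\|L\|\le 1+\sum_i|\omega_i|$ — a slightly cleaner packaging of the same Peano-kernel idea, and you correctly flag the same implicit $3^{-(m-n)}$ normalisation of the composite rule that the paper's proof also relies on.
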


\begin{proof}
Break up $\int_{SG}f\dd{\mu}$ into integrals over cells $F_wSG$ where $|w| = m-n$. For one such cell, let $g_\omega\in \mathcal{H}_n$ be such that $g_\omega = f\eval_{V_n}$. Then we have
\begin{align}
    \qty|I_n^n\qty(f\circ F_w) - \int f\circ F_w\dd{\mu}| =  \qty|\int g_\omega - f\circ F_w\dd{\mu}|\\ \leq \|g_\omega - f\circ F_w\|_\infty \leq c_1\qty(n)\|\Delta^{\qty(n+1)}\qty(f\circ F_w)\|_\infty,
\end{align}
where the last equality results from applying $\qty(n+1)$-times Green operators to $\Delta^{\qty(n+1)}\qty(f\circ F_w)$ and making use of the interpolation and the properties of a finite-dimensional normed space. Combining the subintegrals over the cells and applying the triangle inequality results in
\begin{align}
     \qty|I_n^m\qty(f) - \int_{SG}f\dd{\mu}| = 3^{-\qty(m-n)} \qty|\sum\limits_{|w| = m-n} I_n^n\qty(f\circ F_w) - \int f\circ F_w\dd{\mu}|\\
     \leq c_1\qty(n)\sup\limits_\omega\|\Delta^{\qty(n+1)}\qty(f\circ F_w)\|_\infty \leq c_1\qty(n)5^{-\qty(n+1)m}\|\Delta^{\qty(n+1)}f\|_\infty.
\end{align}\end{proof}

\begin{remark}\label{rem:numerical-quadrature}
In practice, we find that using this construction for interpolation and quadrature is unstable due to Runge phenomena. The quadrature rules are exact for polynomials by construction, but using high order quadrature for other functions results in large numerical instabilities. To fix this problem, we could attempt to construct piecewise polynomial spline interpolants, which are much more stable. However, this construction would again involve determining a formula for extending a function $n$-harmonically.
\end{remark}


\section*{Acknowledgment}
 Kasso A. Okoudjou  was partially supported by the U. S.\ Army Research Office  grant W911NF1910366, and an MLK visiting professorship at MIT. Jiang, Lan, Sule, and Venkat would like to acknowledge the REU at Cornell University.   
 
\bibliographystyle{siam}
\bibliography{sob.bib}

\begin{thebibliography}{10}

\bibitem{Alt}
{\sc P.~Althammer}, {\em Eine {E}rweiterung des {O}rthogonalit\"atsbegriffes
  bei {P}olynomen und deren {A}nwendung auf die beste {A}pproximation}, J.
  Reine Angew. Math., 211 (1962), pp.~192--204.

\bibitem{BGSKSY}
{\sc S.~Ben-Gal, A.~Shaw-Krauss, R.~S. Strichartz, and C.~Young}, {\em Calculus
  on the {S}ierpinski gasket {II}: {P}oint singularities, eigenfunctions, and
  normal derivatives of the heat kernel}, Trans. Amer. Math. Soc., 358 (2006),
  pp.~3883--3936.

\bibitem{Coh75}
{\sc J.~Cohen, E.~A.}, {\em Zero distribution and behavior of orthogonal
  polynomials in the {S}obolev space ${W}^{1,2}[-1,1]$}, SIAM J. Math. Anal.,
  (1975), pp.~105--116.

\bibitem{kigami2001analysis}
{\sc J.~Kigami}, {\em Analysis on fractals}, vol.~143, Cambridge University
  Press, 2001.

\bibitem{M}
{\sc J.~C. Mairhuber}, {\em On haar's theorem concerning chebychev
  approximation problems having unique solutions}, Proceedings of the American
  Mathematical Society, 7 (1956), pp.~609--615.

\bibitem{mmb17}
{\sc F.~Marcell\'an and J.~J. Moreno-Balc\'azar}, {\em What is ... a {S}obolev
  orthogonal polynomial?}, Notices Amer. Math. Soc., 64 (2017), pp.~873--875.

\bibitem{MX}
{\sc F.~Marcellan and Y.~Xu}, {\em On {S}obolev orthogonal polynomials},
  Expositiones Mathematicae, 33 (2015), pp.~308 -- 352.

\bibitem{Meij}
{\sc H.~G. Meijer}, {\em A short history of orthogonal polynomials in a
  {S}obolev space {I}. the non-discrete case}, Niew Arch. voor Wiskunde, 14
  (1996), pp.~93--112.

\bibitem{NSTY}
{\sc J.~Needleman, R.~S. Strichartz, A.~Teplyaev, and P.-L. Yung}, {\em
  Calculus on the {S}ierpinski gasket {I}: polynomials, exponentials and power
  series}, Journal of Functional Analysis, 215 (2004), pp.~290 -- 340.

\bibitem{OST}
{\sc K.~A. Okoudjou, R.~S. Strichartz, and E.~K. Tuley}, {\em Orthogonal
  polynomials on the sierpinski gasket}, Constructive Approximation, 37 (2013),
  pp.~311--340.

\bibitem{Pesenson09}
{\sc I.~Pesenson}, {\em Variational splines and {P}aley–{W}iener spaces on
  combinatorial graphs}, Constr. Approx., 29 (2009), pp.~1--21.

\bibitem{sch72}
{\sc F.~W. Sch\"afke}, {\em Zu den {O}rthogonalpolynomen von {A}lthammer}, J.
  Reine Angew. Math., 252 (1972), pp.~195--199.

\bibitem{StriFctSp}
{\sc R.~S. Strichartz}, {\em Function spaces on fractals}, J. Funct. Anal., 198
  (2003), pp.~43--83.

\bibitem{S}
\leavevmode\vrule height 2pt depth -1.6pt width 23pt, {\em Differential
  Equations on Fractals: A Tutorial}, Princeton University Press, 2006.

\bibitem{SU}
{\sc R.~S. Strichartz and M.~Usher}, {\em Splines on fractals}, Mathematical
  Proceedings of the Cambridge Philosophical Society, 129 (2000), pp.~331--360.

\bibitem{JSV}
{\sc S.~Sule and S.~Venkat}, {\em Numerics for {SOB-OP}s}.
\newblock \url{https://github.com/s769/op_on_sg}, 2020.
\newblock Online; accessed 29-December-2020.

\bibitem{WNW20}
{\sc J.~P. Ward, F.~J. Narcowich, and J.~D. Ward}, {\em Interpolating splines
  on graphs for data science applications}, Appl. Comput. Harmon. Anal., 49
  (2020), pp.~540--557.

\end{thebibliography}

\end{document}